\newtheorem{lemma}{Lemma}[section]
\newtheorem{corollary}[lemma]{Corollary}
\newtheorem{theorem}[lemma]{Theorem}
\newtheorem{definition}[lemma]{Definition}
\newtheorem{example}[lemma]{Example}
\def\jca{\mathcal J}
\def\tta{\tilde\tau}
\title{Effect algebras with state operator}
\author{A. Jen\v cov\'a, S. Pulmannov\'a}\thanks{This work was supported by grant VEGA 2/0059/12 and by Science and Technology Assistance Agency under the contract no. APVV-0178-11.}
\date{}
\address{Mathematical Institute, Slovak Academy of Sciences, \v
Stef\'anikova 49, 814 73 Bratislava, Slovakia }
\email{pulmann@mat.savba.sk, jenca@mat.savba.sk}
\keywords{effect algebra, convex effect algebra, MV algebra, state operator, JC-effect algebra, conditional expectation}
\subjclass{Primary 81P10, 46L53, Secondary 17C50, 47C15}
\begin{document}
\maketitle

\begin{abstract} State operators on convex effect algebras, in particular effect algebras of unital JC-algebras, JW-algebras and convex $\sigma$-MV algebras are studied and their relations with conditional expectations in algebraic sense as well as in the sense of probability on MV-algebras are shown.

\end{abstract}

\section{Introduction}

Effect algebras have been introduced by Foulis and Bennett \cite{FoBe} (see also \cite{GiGr, KCh} for equivalent definitions) for modeling unsharp measurements in quantum mechanical systems \cite{BLM}. They are a generalization of
many structures which arise in the axiomatization of quantum mechanics (Hilbert space effects \cite{Lud}), noncommutative measure theory and probability (orthomodular lattices and posets, \cite{BeCa, PtPu}), fuzzy measure theory and many-valued logic (MV-algebras \cite{Chang, CDM}).

A state, as an analogue of a probability measure, is a basic notion in algebraic structures used in the quantum theories (see e.g., \cite{DvPu}), and properties of states have been deeply studied by many authors.

In MV-algebras, states as averaging the truth value  were first studied in \cite{Mustate}. In the last few years, the notion of a state has been studied by many experts in MV-algebras, e.g, \cite{RiMu, KM}.

Another approach to the state theory on MV-algebras has been presented recently in \cite{FM}. Namely, a new unary operator  was added to the MV-algebras structure as an internal state (or so-called state operator). MV-algebras with the added state operator are called state MV-algebras.   The idea is that an internal state has some properties reminiscent of states, but, while a state is a map from an MV-algebra into $[0,1]$, an internal state is an operator of the algebra.  State MV-algebras generalize, for example, H\'ajek's approach \cite{Haj} to fuzzy logic with modality $Pr$ (interpreted as {\it probably}) with the following semantic interpretation: The probability of an event $a$ is presented as the truth value of $Pr(a)$. For a more detailed motivation of state MV-algebras and their relation to logic, see \cite{FM}.

In \cite{BCD}, the notion of a state operator was extended from MV-algebras to the more general frame of effect algebras. A state operator is there defined as an additive, unital and idempotent operator on $E$.
A state operator on $E$ is called strong, if it satisfies the additional condition
\begin{equation}
\tau(\tau(a)\wedge \tau(b))=\tau(a)\wedge \tau(b) \, \mbox{whenever}\  \tau(a)\wedge \tau(b) \ \mbox{exists in} \ E.
\end{equation}
Since MV-algebras form a special subclass of effect algebras, so-called MV-effect algebras, it was shown that
the definition of a state operator on effect algebras coincides with the original definition on MV-algebras if and only if the state operator is strong. Moreover,  if $\tau$ is faithful, i.e., $\tau(a)=0$ implies $a=0$, then it is automatically strong.

In the present paper, we show that state operators  on an effect algebra $E$ are related with states on $E$ in the following way: (1)  every state on $E$ induces a state operator on the tensor product $[0,1]\otimes E$. (2) If $E$ admits an ordering set of states, then every state operator on $E$ induces a state on $E$.
We study state operators mainly on convex effect algebras.
Convex effect algebras as effect algebras with an additional convexity structure were introduced and studied in \cite{GuPurepr, BBGP, BGP}. It was proved in \cite{GuPurepr}, that every convex effect algebra  is isomorphic with the interval $[0,u]$ in an ordered real linear space $(V,V^+)$, where $u$ is an order unit. Moreover, $(V,u)$ is an order unit space, i.e. $u$ is an archimedean order unit, if and only if $E$ admits an ordering set of states \cite{BBGP}.
Using the tensor product of an effect algebra with the interval $[0,1]$ of reals, we show that every effect algebra $E$ admitting at least one state, can be embedded into a convex one.   Moreover, a state operator on $E$ extends to a state operator on its convex envelope. It is therefore not too restrictive to concentrate our interest on convex effect algebras with state operators.
We show that a state operator on a convex effect algebra is an affine mapping which extends to a linear, positive and idempotent mapping of the corresponding ordered linear space into itself. Moreover, the state operator is faithful if and only if its extension is faithful.

A prototype of an effect algebra is the set of Hilbert space effects ${\mathcal E}(H)$, i.e., self-adjoint operators between the zero and identity operator on a (complex) Hilbert space $H$ with respect to the usual ordering of self-adjoint operators.  The partial operation $\oplus$ is defined as the usual operator sum of two effects  whenever this sum is also an effect, and the effect algebra ordering coincides with the original one we started with.   This effect algebra is naturally convex. It plays an important role in quantum measurement theory, because the most general quantum observables, positive operator valued measures (POVMs), have their ranges in it \cite{BLM}.  On the set ${\mathcal B}(H)$ of the bounded operators on $H$, we consider the von Neumann  \cite{vN} and L\"uders \cite{Lu} conditional expectations, and we show that their restrictions to ${\mathcal E}(H)$ are faithful, hence strong state operators. Motivated by these examples, we study relations between state operators and conditional expectations on so-called JC-effect algebras.

Recall that a JC-algebra $\jca$ is a norm-closed real vector subspace of bounded self-adjoint operators on a Hilbert space $\mathcal H$, closed under the Jordan product $a\circ b=\tfrac 12(ab+ba)$. A JC-algebra is called a JW-algebra if it is closed in the weak topology \cite{Top}. We study JC-algebras containing a unit element $1$, and the interval $[0,1]$ is then called a \emph{JC-effect algebra}.   Let $\tau :{\mathcal E}({\jca})\to {\mathcal E}(\jca)$ be a state operator. Since $\mathcal E(\jca)$  is the $[0,1]$ interval in the ordered vector space $(\jca,\jca^+)$, $\tau$ extends to a linear, positive idempotent and unital mapping $\tilde \tau: \jca\to \jca$. Such maps were studied in several papers, we use mostly the results in \cite{ES}.

We call a state operator $\tau :{\mathcal E}({\jca})\to {\mathcal E}(\jca)$ a \emph{conditional expectation} iff it has the property $\tau(\tau(a)b\tau(a))=\tau(a)\tau(b)\tau(a)$ for all $a,b\in {\mathcal E}(\jca)$. We show that a
state operator $\tau$ on ${\mathcal E}(\jca)$ is a conditional expectation iff its range is a JC-sub-effect algebra  of ${\mathcal E}(\jca)$, and any faithful state operator is a conditional expectation. Moreover, a state operator can be expressed as a combination of a conditional expectation and so-called Jordan state operator.
We also show that if the JC-effect algebra is the unit interval in $C(X;{\mathbb R})$, the set of real-valued continuous functions on a compact Hausdorff space $X$, then a state $\tau$ operator is a conditional expectation iff  $\tau$ is strong.

In the probability theory on MV-algebras, an MV-conditional expectation on a  $\sigma$-MV-algebra $M$ with respect to a $\sigma$-MV-subalgebra $N$ of $M$ in a $\sigma$-additive state $m$ on $M$  was introduced in \cite{DvPucond}.  We study relations between the MV-conditional expectations  and state operators on convex $\sigma$-MV-algebras.
Since a convex $\sigma$-MV-algebra $M$ is isomorphic with the interval $[0,1]$ in $C(X;{\mathbb R})$ (\cite[Theorem 7.3.12]{DvPu}, a state operator $\tau$ on $M$ is a conditional expectation iff $\tau$ is strong.

We show that an MV-conditional expectation induces a strong state operator on the quotient of $M$ with respect to the kernel of the state $m$.

On the other hand, a strong  $\sigma$-additive state operator (hence a conditional expectation) $\tau$ on $M$ induces an MV-conditional expectation on $M$ with respect to the sub-MV-algebra equal to the range of $\tau$ and in states of the form $s\circ\tau$, where $s$ is any $\sigma$-additive state.

\section{Basic definitions}
\begin{definition} \label{df:EA}
An \emph{effect algebra} (EA, for short) is a structure $(E;0,1,\oplus)$
consisting of a set $E$; elements $0$ and $1$ in $E$ called the
\emph{zero} and the \emph{unit}; and a partially defined binary operation
$\oplus$ on $E$ called the \emph{orthosummation}, such that, for all
$d,e,f\in E$:
\begin{enumerate}
\item[(EA1)] If $e\oplus f$ is defined, then $f\oplus e$ is defined and
 $e\oplus f=f\oplus e$.
\item[(EA2)] If $d\oplus e$ and $(d\oplus e)\oplus f$ are defined, then
 $e\oplus f$ and $d\oplus(e\oplus f)$ are defined and $(d\oplus
 e)\oplus f=d\oplus(e\oplus f)$.
\item[(EA3)] For each $e\in E$ there exists a unique element $e\sp{\perp}
 \in E$,  called the \emph{orthosupplement} of $e$, such that
 $e\oplus e\sp{\perp}=1$.\footnote{If we write an equation involving
 an orthosum $e\oplus f$ without explicitly stating that $e\oplus f$ is
 defined, we are tacitly assuming that it is defined.}
\item[(EA4)] $e\oplus 1$ is defined only if $e=0$.
\end{enumerate}
\end{definition}

In an effect algebra $E$, a partial ordering is defined by $a\leq b$ iff there is $c\in E$ such that $a\oplus c=b$.
It turns out that the element $c$, if it exists, is uniquely defined. This enables us to introduce the partial operation $\ominus$ by $b\ominus a:=c$ \footnote{The notation $:=$ means `equals by definition'.} iff $a\oplus c=b$. Clearly, $b\ominus a$ is defined iff $a\leq b$.
With respect to this partial order we have $0\leq a\leq 1$ for all $a\in E$.

Effect algebras were introduced in \cite{FoBe}. Notice that effect algebras are equivalent to D-posets \cite{KCh}, which are partial algebraic structures based on the operation $\ominus$ \cite{DvPu}.

The operation $\oplus$ can be extended to suitable sequences $a_1,a_2,\ldots,a_n$ of elements (not necessarily all different) by recurrence. We say that the elements $a_1,a_2,\ldots,a_n$ are \emph{orthogonal} iff their orthosum $a_1\oplus a_2\oplus \cdots \oplus a_n=:\oplus_{i\leq n}a_i$ is defined. An arbitrary family of elements $\{ a_{\lambda}:\lambda \in I\}$ in $E$ is called \emph{orthogonal}, iff every its finite subfamily is orthogonal. Let ${\mathcal F}(I)$ denote the set of all finite subsets of the index set $I$. If the element $a:=\bigvee_{F\in {\mathcal F}(I)}\oplus_{i\in F}a_i$ exists, then $a$ is called the \emph{orthosum} of the orthogonal family $\{ a_{\lambda}:\lambda \in I\}$. The effect algebra $E$ is called \emph{$\sigma$-orthocomplete} (\emph{orthocomplete})
iff every countable (arbitrary) orthogonal family has an orthosum.

\begin{example} \label{ex:booleanalg}\rm{

An orthomodular lattice $(L; \leq, ^{\perp}, 0,1)$ is organized into an EA by defining $a\oplus b$
for $a,b\in L$ iff $a\leq b^{\perp}$, in which case $a\oplus b :=
a\vee b$.}

Conversely, an effect algebra $E$ is an OML iff it is lattice ordered and  $a\leq b^{\perp} \,\implies\, a\wedge b=0$.
\end{example}

\begin{example}\label{ex:MValg}
\rm{Recall that an \emph{MV-algebra} is a system $(M;\boxplus,',0)$, where $(M;\boxplus,0)$ is a commutative monoid with neutral element $0$, and for each $x,y\in M$ the following equations hold:
\begin{enumerate}
\item[(i)] $(x')'=x$,
\item[(ii)] $x\boxplus1=1$, where $1=0'$,
\item[(iii)] $x\boxplus(x\boxplus y')'=y\boxplus(y\boxplus x')'$.
\end{enumerate}
In every MV-algebra one can define further operations as follows:
\begin{eqnarray*}
x\boxdot y&=&(x'\boxplus y')',\, x\vee y=(x'\boxplus y)'\boxplus y \\
x\wedge y&=&(x'\vee y')', \, x\boxminus y=(x'\boxplus y)'.
\end{eqnarray*}

A prototype of an MV-algebra is the unit interval $[0,1]\subseteq {\mathbb R}$, with operations
$x\boxplus y=\min\{1,x+y\}$, and $x'=1-x$.
An MV-algebra becomes a boolean algebra iff the following identity holds: $a\boxplus a=a$.

An MV-algebra $(M; \boxplus, ',0)$ is organized into an EA by defining
$a\oplus b$ for $a,b\in M$ iff $a\boxdot b=0$, in which case $a\oplus b:=a\boxplus b$.

Conversely, an EA $E$ can be made an MV-algebra iff $E$ is a lattice and

$$
 a\wedge b=0 \implies \,  a\leq b^{\perp},
$$
in which case $E$ is called an \emph{MV-effect algebra}. The MV-algebra operations are defined by $a\boxplus b:=a\oplus (a^{\perp}\wedge b)$ and $a':=a^{\perp}$.

 MV-algebras and MV-effect algebras are in one-to one correspondence, and we identify them.

 Notice that Boolean algebras coincide with the subclass of MV-algebras satisfying the additional identity $a\boxplus a=a$.}

\end{example}

\begin{example}\label{ex:effects}\rm{Let $H$ be a (complex) Hilbert space. A (self-adjoint) operator  $a:H\to H$ is called an \emph{effect} iff $0\leq a\leq I$ (with respect to the usual ordering of self-adjoint operators).
Let ${\mathcal E}(H)$ denote the set of all effects on $H$, then ${\mathcal E}(H)$ can be organized into an effect algebra with constants $0$ and $I$ as zero and unit, respectively,  by putting $a\oplus b$ is defined iff $a+b$ is an effect, and in this case, $a\oplus b=a+b$. The effect algebra ordering coincides with the original one that we used in the definition of effects.}
\end{example}

\begin{example}\label{ex:group}\rm{ Let $(G;\leq, +,0)$ be an abelian partially ordered group. Then for any $u\in G, u\geq 0$, the interval $G[0,u]:=\{ x\in G: 0\leq x\leq u\}$ can be organized into an effect algebra by defining $a\oplus b=a+b$, provided that $a+b\leq u$. The effect algebra ordering coincides with the restriction of the partial order in $G$ to the interval $G[0,u]$. Effect algebras of this type are called \emph{interval effect algebras}.

Notice that ${\mathcal E}(H)$ is an interval effect algebra. Indeed, ${\mathcal E}(H)={\mathcal B}(H)^{sa}[0,I]$, where ${\mathcal B}(H)^{sa}$ is the self-adjoint part of ${\mathcal B}(H)$.

Moreover, by \cite{Mu}, MV-algebras are equivalent with unit intervals in abelian  $\ell$-groups with a strong unit.

On the other hand, there are examples of OMLs that are not interval effect algebras \cite{Na,We}.
}
\end{example}

A subset $F$ of an effect algebra $E$ is a \emph{subeffect algebra} of $E$ iff (i) $a\in  F$ implies $a^{\perp}\in F$, (ii) $a,b\in F$ and $a\oplus b$ exists in $E$ implies $a\oplus b\in F$.

Let $E$ be an effect algebra and $a\in E$. The interval $E[0,a]:=\{ x\in E: x\leq a\}$ with the operation $\oplus$ restricted to $E[0,a]$ (i.e., $x\oplus_a y$ exists if $x\oplus y$ exists in $E$ and $x\oplus y\leq a$) is an effect algebra with unit element $a$.

Let $E,F$ be two EAs. A mapping $\phi :E\to F$ is said to be:

(i) a \emph{morphism} iff $\phi(1)=1$, and $p\perp q, p,q\in E$ implies $\phi(p)\perp \phi(q)$, and $\phi(p\oplus q)=\phi(p)\oplus \phi(q)$;

(ii) a \emph{monomorphism} iff $\phi$ is a morphism and $\phi(p)\perp \phi(q)$ iff $p\perp q$;

(iii) an \emph{isomorphism} iff $\phi$ is a surjective monomorphism, and we say that $E$ is \emph{isomorphic} to $F$;

(iv) a \emph{state} iff $\phi$ is a morphism and $F$ is the MV effect algebra $[0,1]$.

If $E$ and $F$ are $\sigma$-orthocomplete (orthocomplete) effect algebras, then a morphism $\phi :E\to F$ is a $\sigma$-morphism (complete morphism) iff
$\phi(\oplus_{i\in {\mathbb N}}a_i)=\oplus_{i\in {\mathbb N}}\phi(a_i)$, whenever $\oplus_{i\in {\mathbb N}}a_i$ exists in $E$ (iff $\phi(\oplus_{i\in I}a_i)=\oplus_{i\in I}\phi(a_i)$ whenever $\oplus_{i\in I}a_i$ exists for an arbitrary index set $I$).

Let $E,F,L$ be EAs. A mapping $\beta:E\times F \to L$ is called a \emph{bimorphism} iff:

(i) $a,b\in E$ with $a\perp b$, $q\in F$ imply $\beta(a,q)\perp \beta(b,q)$ and $\beta(a\oplus b,q)=\beta(a,q)\oplus \beta(b,q)$;

(ii) $c,d\in F$ with $c\perp d$, $p\in E$ imply $\beta(p,c)\perp \beta(p,d)$ and $\beta(p,c\oplus d)=\beta(p,c)\oplus \beta(p,d)$;

(iii) $\beta(1,1)=1$.

If, in addition, $E$ and $F$ are MV-effect algebras, then also the following properties are required:

(iv) $a_1,a_2\in E$, $b\in F$ imply $\beta(a_1\vee a_2,b)=\beta(a_1,b)\vee \beta(a_2,b)$,  $\beta(a_1\wedge a_2)=\beta(a_1b)\wedge \beta(a_2,b)$;

(v) $a\in E$, $b_1,b_2\in F$ imply $\beta(a,b_1\vee b_2)=\beta(a,b_1)\vee \beta(a,b_2)$, $\beta(a,b_1\wedge b_2)=\beta(a,b_1)\wedge \beta(a,b_2)$.

Recall that a bimorphism $\beta:P\times Q \to L$, where $P,Q$ and $L$ are $\sigma$-effect algebras ($\sigma$-MV-effect algebras), is a $\sigma$-bimorphism iff whenever $a_i\in P, b_i\in Q$ are increasing then $\beta(\bigvee a_i,b)=\bigvee \beta(a_i,b)$ for every $b\in Q$, and $\beta(a,\bigvee b_i)=\bigvee(a,b_i)$ for every $a\in P$.

The following is a generalization of the definition of tensor product of effect algebras in \cite{Dv95}.

\begin{definition}\label{de:tensprod} Let ${\mathcal K}$ be a class of effect algebras and $E,F\in {\mathcal K}$. We say that a pair $(T,\rho)$ consisting of an effect algebra $T\in {\mathcal K}$ and a ${\mathcal K}$- bimporphism $\rho:E\times F \to T$ is a \emph{tensor product} of $E$ and $F$ in the class ${\mathcal K}$ iff the following condition is satisfied:

(T) If $L\in {\mathcal K}$ and $\beta:E\times F\to L$ is a ${\mathcal K}$- bimorphism, there exists a unique  ${\mathcal K}$-morphism $\phi :T\to L$ such that $\beta =\phi \circ \rho$.

\end{definition}

The following theorem was proved in \cite[Theorem 5.3]{Dvijtp}.

\begin{theorem}\label{th:tenprod}
Let both effect algebras $P$ and $Q$ posses  at least one state. Then
the tensor product  $P\otimes Q$ of $P$ and $Q$ exists in the category of effect algebras. In addition,
for any state $\mu$ on $P$ and any state $\nu$ on $Q$ there is a unique state $\mu\otimes\nu$ on
$P\otimes Q$ such that $\mu\otimes \nu(p\otimes q)= \mu(p)\nu(q)$, $p\in P$, $q\in Q$.
\end{theorem}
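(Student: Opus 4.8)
The plan is to realize $P\otimes Q$ as a subeffect algebra of a suitable direct product of effect algebras. I would first record two routine facts about the category of effect algebras. (a) For any family $(L_i)_{i\in I}$, the direct product $\prod_{i\in I}L_i$ with $0$, $1$ and $\oplus$ defined coordinatewise (an orthosum being defined exactly when it is defined in every coordinate) is again an effect algebra, and each projection $\pi_i$ is a morphism. (b) A nonempty subset of an effect algebra that is closed under $a\mapsto a^{\perp}$ and under $\oplus$ (whenever the orthosum exists in the ambient algebra) is a subeffect algebra, and two morphisms out of an effect algebra that agree on a generating set are equal, since for a morphism $\phi$ one has $\phi(0)=0$, $\phi(a^{\perp})=\phi(a)^{\perp}$ and $\phi(a\oplus b)=\phi(a)\oplus\phi(b)$. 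Next, call a bimorphism $\beta\colon P\times Q\to L$ \emph{essential} if $\beta(P\times Q)$ generates $L$ as an effect algebra; every bimorphism corestricts to an essential one, and an essential $L$ has cardinality at most $\kappa:=|P\times Q|+\aleph_0$. Fixing a set $U$ with $|U|=\kappa$, the essential bimorphisms whose codomain is an effect-algebra structure on a subset of $U$ form a set $\{(L_j,\beta_j):j\in J\}$. Put $\Pi:=\prod_{j\in J}L_j$, define $\rho\colon P\times Q\to\Pi$ by $\rho(p,q):=(\beta_j(p,q))_{j\in J}$, and let $T$ be the subeffect algebra of $\Pi$ generated by $\rho(P\times Q)$. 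A coordinatewise check shows $\rho$ is a bimorphism into $T$ (note $\rho(1,1)=(1_{L_j})_{j\in J}=1_\Pi\in T$).

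I would then verify the universal property (T). Let $L\in\mathcal K$ and let $\beta\colon P\times Q\to L$ be a bimorphism. Let $M$ be the subeffect algebra of $L$ generated by $\beta(P\times Q)$; the corestriction $\beta'\colon P\times Q\to M$ is essential, so after transporting the effect-algebra structure of $M$ onto a subset of $U$ we obtain $j_0\in J$ and an isomorphism $\theta\colon L_{j_0}\to M$ with $\theta\circ\beta_{j_0}=\beta'$. Then $\phi:=\theta\circ(\pi_{j_0}|_T)\colon T\to M\subseteq L$ is a morphism with $\phi\circ\rho=\beta'=\beta$, and its uniqueness follows from fact (b) because $\rho(P\times Q)$ generates $T$. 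Hence $(T,\rho)$ is a tensor product in the category of effect algebras; I write $P\otimes Q:=T$ and $p\otimes q:=\rho(p,q)$.

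For the second assertion, given a state $\mu$ on $P$ and a state $\nu$ on $Q$, the map $\beta_{\mu,\nu}(p,q):=\mu(p)\nu(q)$ is a bimorphism from $P\times Q$ into the MV-effect algebra $[0,1]$: if $a\perp b$ in $P$ then $\mu(a)+\mu(b)=\mu(a\oplus b)\le 1$, hence $\mu(a)\nu(q)+\mu(b)\nu(q)=(\mu(a)+\mu(b))\nu(q)\le 1$, so $\beta_{\mu,\nu}(a,q)\perp\beta_{\mu,\nu}(b,q)$ and $\beta_{\mu,\nu}(a\oplus b,q)=\beta_{\mu,\nu}(a,q)\oplus\beta_{\mu,\nu}(b,q)$; the second variable is symmetric, and $\beta_{\mu,\nu}(1,1)=1$. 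Since a state is precisely a morphism into $[0,1]$, condition (T) applied to $\beta_{\mu,\nu}$ gives a unique morphism $\mu\otimes\nu\colon P\otimes Q\to[0,1]$ with $(\mu\otimes\nu)(p\otimes q)=\mu(p)\nu(q)$, and this is the asserted state; uniqueness among states with that property is again the uniqueness clause of (T) (equivalently, fact (b)). One also sees here why the state hypothesis is natural: the bimorphisms $\beta_{\mu,\nu}$ separate $\rho(1,1)$ from $\rho(0,q)$, so $P\otimes Q$ does not collapse to the one-element effect algebra.

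I expect the main obstacle to be the set-theoretic reduction in the first paragraph — passing from the proper class of all bimorphisms out of $P\times Q$ to the set $\{(L_j,\beta_j):j\in J\}$ by bounding the cardinality of an essential codomain, which is the "solution-set" ingredient that makes the construction legitimate — together with the bookkeeping needed to see that an arbitrary bimorphism factors through one of the chosen essential ones. The remaining verifications (that coordinatewise products and generated subalgebras of effect algebras behave as expected, and that $\beta_{\mu,\nu}$ is a bimorphism) are routine coordinatewise or inequality arguments.
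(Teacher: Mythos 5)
The paper offers no proof of this statement --- it is quoted verbatim from Theorem 5.3 of the cited reference \cite{Dvijtp} --- but your construction (a cardinality bound on codomains generated by the image of a bimorphism, a representative set of essential bimorphisms with codomains carried by subsets of a fixed set $U$, the product of these codomains, and the subeffect algebra generated by the diagonal image of $\rho$) is precisely the standard one used there, and your verifications of the universal property and of the induced state $\mu\otimes\nu$ are correct. The one point I would state explicitly rather than leave to the closing remark is that the hypothesis that $P$ and $Q$ admit states is exactly what guarantees the index set $J$ is nonempty, via the bimorphism $(p,q)\mapsto\mu(p)\nu(q)$ into $[0,1]$, so that $\Pi$ and hence $T$ do not degenerate to the one-element algebra and $\rho(1,1)=1\neq 0$ in $T$.
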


Tensor products in the  category of $\sigma$-effect algebras were studied in \cite{Gsigma}. Tensor products in the category of MV-algebras (equivalently, MV-effect algebras) were studied in \cite{MuLS}.

Notice that for the unit interval $[0,1]\subseteq {\mathbb R}$, in the category of $\sigma$-effect algebras,  $[0,1]\otimes_{\sigma}[0,1]=[0,1]$, where the bimorphism is given by $(\alpha,\beta)\mapsto \alpha \beta$ \cite{Gsigma}, while in the category of effect algebras, $[0,1]\otimes[0,1]\neq [0,1]$ \cite{Ptenprod}.

\section{State operator}

Recently, in \cite{FM}, the notion of an MV-algebra with internal state, \emph{state MV-algebra} (SMV-algebra for short), was introduced as follows. An SMV-algebra is a structure $(M,\sigma)= (M;\boxplus,',\sigma,0)$, where $(M;\boxplus,',0)$ is an MV-algebra, and $\sigma$ is a unary operator on $M$ satisfying, for each $x,y\in M$:
\begin{enumerate}
\item[($\sigma$1)] $\sigma(0)=0$.
\item[($\sigma$2)] $\sigma(x')=(\sigma(x))'$.
\item[($\sigma$3)] $\sigma(x\boxplus y)=\sigma(x)\boxplus\sigma(y\boxminus(x\boxdot y))$.
\item[($\sigma$4)] $\sigma(\sigma(x)\boxplus \sigma(y))=\sigma(x)\boxplus \sigma(y)$.
\end{enumerate}

 In \cite{BCD},the notion of an internal state (called also a state operator) was extended to the more general  frame of effect algebras.

\begin{definition}\label{de:stateop} A \emph{state operator} on an EA $E$ is a mapping $\tau :E\to E$ such that, for all $e,f\in E$,
\begin{enumerate}
\item[(i)] $\tau(1)=1$;
\item[(ii)] $\tau(e\oplus f)=\tau(e)\oplus \tau(f)$;
\item[(iii)] $\tau(\tau(a))=\tau(a)$.
\end{enumerate}
The couple $(E;\tau)$, where $E$ is an effect algebra and $\tau$ is a state operator, will be called a \emph{state effect algebra} (SEA).
\end{definition}

\begin{lemma}\label{le:propoftau} For every SEA the following properties hold:

(i) $\tau(0)=0$.

(ii) $\tau(a^{\perp})=(\tau(a))^{\perp}$.

(iii) $a\leq b \ \implies\ \tau(a)\leq \tau(b)$, and $\tau(b\ominus a)=\tau(b)\ominus \tau(a)$.

(iv) If $a\wedge b$ exists, then $\tau(a\wedge b)\leq \tau(a), \tau(b)$, and if $a\vee b$ exists, then $\tau(a),\tau(b)\leq \tau(a\vee b)$.

(v) $\tau(E)$ is a subeffect algebra of $E$.
\end{lemma}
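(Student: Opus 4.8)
The plan is to deduce all five items directly from axioms (i)--(iii) of Definition~\ref{de:stateop} together with the elementary arithmetic of effect algebras recalled above, being careful only that every orthosum written down is in fact defined before additivity of $\tau$ is applied.

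For (i), write $1=\tau(1)=\tau(0\oplus 1)=\tau(0)\oplus\tau(1)=\tau(0)\oplus 1$; since this orthosum exists, (EA4) forces $\tau(0)=0$. For (ii), apply additivity to $a\oplus a^{\perp}=1$ to get $\tau(a)\oplus\tau(a^{\perp})=\tau(1)=1$, so $\tau(a^{\perp})$ is an orthosupplement of $\tau(a)$, and the uniqueness clause of (EA3) gives $\tau(a^{\perp})=(\tau(a))^{\perp}$. For (iii), if $a\leq b$ then $b=a\oplus(b\ominus a)$ by the definition of $\ominus$, and applying $\tau$ yields $\tau(b)=\tau(a)\oplus\tau(b\ominus a)$; this simultaneously shows $\tau(a)\leq\tau(b)$ and, by uniqueness of the difference, $\tau(b)\ominus\tau(a)=\tau(b\ominus a)$. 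Item (iv) is then immediate from the monotonicity just established, since $a\wedge b\leq a$, $a\wedge b\leq b$ and $a\leq a\vee b$, $b\leq a\vee b$.

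For (v) I would verify the two closure conditions of the definition of subeffect algebra. Closure under orthosupplementation is exactly (ii): if $c=\tau(a)\in\tau(E)$ then $c^{\perp}=\tau(a^{\perp})\in\tau(E)$. For closure under $\oplus$, suppose $c,d\in\tau(E)$ and $c\oplus d$ exists in $E$; by idempotence (iii) every element of $\tau(E)$ is a fixed point of $\tau$, so $\tau(c)=c$, $\tau(d)=d$, and additivity gives $\tau(c\oplus d)=\tau(c)\oplus\tau(d)=c\oplus d$, whence $c\oplus d\in\tau(E)$. (That $0,1\in\tau(E)$ follows from (i) and axiom~(i) of Definition~\ref{de:stateop}.)

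No step here is a genuine obstacle; the lemma is essentially a bookkeeping exercise. The only points needing mild care are: ensuring in (i) that $\tau(0)\oplus 1$ is legitimately a defined orthosum (it is, being $\tau(0\oplus 1)$) before invoking (EA4), and, in (v), using idempotence to identify the elements of $\tau(E)$ as fixed points of $\tau$ rather than arguing with preimages.
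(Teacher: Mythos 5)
Your proof is correct and is exactly the standard argument; the paper states this lemma without proof (it is routine and essentially taken from \cite{BCD}), and every step you give — using (EA4) for $\tau(0)=0$, uniqueness in (EA3) for orthosupplements, cancellation for $\ominus$, and idempotence to identify $\tau(E)$ with the fixed points of $\tau$ — is the intended bookkeeping.
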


\begin{lemma} {\rm \cite{BCD}} A state operator on an MV-effect  algebra $(E;\oplus, 0,1)$ is a state operator on the MV-algebra $(E; \boxplus,',0)$ in the sense of \cite{FM} if and only if the following additional condition is satisfied:
\begin{equation}\label{eq: x}
\exists  \tau(e)\wedge \tau(f) \,\implies \, \tau(\tau(e)\wedge \tau(f))=\tau(e)\wedge \tau(f).
\end{equation}
\end{lemma}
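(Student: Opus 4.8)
The plan is to prove the two implications separately, using throughout that $\tau$ is already a state operator on the effect algebra $(E;\oplus,0,1)$, so that every item of Lemma~\ref{le:propoftau} — in particular $\tau(0)=0$, $\tau(a^{\perp})=\tau(a)^{\perp}$, monotonicity, idempotence, and $\tau(b\ominus a)=\tau(b)\ominus\tau(a)$ — is available. The whole argument rests on the dictionary between the effect-algebra operations $\oplus,\ominus,\wedge$ and the MV-operations $\boxplus,\boxminus,\boxdot$: from Example~\ref{ex:MValg} one has $a\boxplus b=a\oplus(a^{\perp}\wedge b)$ and $a'=a^{\perp}$, and a routine MV-algebra identity gives $b\boxminus(a\boxdot b)=a^{\perp}\wedge b$, so that $a\boxplus b$ is literally realized by the orthosum $a\oplus\bigl(b\boxminus(a\boxdot b)\bigr)$, which is defined because $a^{\perp}\wedge b\le a^{\perp}$.

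For the "if" direction I would assume \eqref{eq: x} and check ($\sigma$1)--($\sigma$4). Conditions ($\sigma$1) and ($\sigma$2) are exactly Lemma~\ref{le:propoftau}(i),(ii). For ($\sigma$3) I would compute, on the one hand, $\tau(x\boxplus y)=\tau\bigl(x\oplus(x^{\perp}\wedge y)\bigr)=\tau(x)\oplus\tau(x^{\perp}\wedge y)$ by additivity, and on the other hand $\tau(x)\boxplus\tau\bigl(y\boxminus(x\boxdot y)\bigr)=\tau(x)\oplus\bigl(\tau(x)^{\perp}\wedge\tau(x^{\perp}\wedge y)\bigr)$; since $x^{\perp}\wedge y\le x^{\perp}$ yields $\tau(x^{\perp}\wedge y)\le\tau(x^{\perp})=\tau(x)^{\perp}$ by monotonicity, the meet on the right collapses and the two sides agree — notably this step does not use \eqref{eq: x}. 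Condition ($\sigma$4) is where \eqref{eq: x} is needed: writing $\tau(x)\boxplus\tau(y)=\tau(x)\oplus\bigl(\tau(x)^{\perp}\wedge\tau(y)\bigr)$, additivity and idempotence give $\tau\bigl(\tau(x)\boxplus\tau(y)\bigr)=\tau(x)\oplus\tau\bigl(\tau(x^{\perp})\wedge\tau(y)\bigr)$, and \eqref{eq: x} applied with $e=x^{\perp}$, $f=y$ replaces $\tau\bigl(\tau(x^{\perp})\wedge\tau(y)\bigr)$ by $\tau(x^{\perp})\wedge\tau(y)=\tau(x)^{\perp}\wedge\tau(y)$, recovering $\tau(x)\boxplus\tau(y)$.

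For the "only if" direction I would assume ($\sigma$1)--($\sigma$4). By idempotence, $\tau(E)$ equals the fixed-point set $\{z\in E:\tau(z)=z\}$; by ($\sigma$2) it is closed under $'$, by ($\sigma$4) it is closed under $\boxplus$, and by ($\sigma$1) it contains $0$, so it is a sub-MV-algebra of $E$. Hence it is closed under the derived operations, in particular under $\wedge$, and since $x\wedge y$ is given by a fixed term in $\boxplus$ and $'$, the meet computed inside $\tau(E)$ coincides with the meet in $E$. Therefore, for any $e,f\in E$, the element $\tau(e)\wedge\tau(f)$ lies in $\tau(E)$, i.e.\ is fixed by $\tau$, which is precisely \eqref{eq: x} (the existence hypothesis being automatic, since an MV-effect algebra is a lattice).

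The only delicate points are bookkeeping: I would need to justify the translation identity $b\boxminus(a\boxdot b)=a^{\perp}\wedge b$ (and that $a\boxplus b$ is a genuine orthosum) from the MV-axioms of Example~\ref{ex:MValg}, and, in the second direction, to confirm that the lattice operations of the sub-MV-algebra $\tau(E)$ really are the restrictions of those of $E$ rather than something coarser. Neither is hard, but the proof lives or dies on getting these correspondences exactly right; everything else is a direct application of additivity, monotonicity and idempotence of $\tau$.
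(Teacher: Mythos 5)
Your proposal is correct and follows essentially the same route as the paper: conditions ($\sigma$1)--($\sigma$3) are verified from additivity and monotonicity via the translation $a\boxplus b=a\oplus(a^{\perp}\wedge b)$ and $b\boxminus(a\boxdot b)=a^{\perp}\wedge b$, condition ($\sigma$4) is exactly where \eqref{eq: x} enters, and the converse is obtained by observing that the range of an internal state is an MV-subalgebra consisting of fixed points, hence closed under $\wedge$. The extra care you take about the sub-MV-algebra's meet agreeing with that of $E$ (because $\wedge$ is a term in $\boxplus,'$) is a detail the paper leaves implicit but is the right thing to check.
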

\begin{proof} Let $(E;\boxplus,',0,1)$ be the MV-algebra corresponding to the MV-effect algebra $(E;\oplus, 0,1)$.
Then $\tau(0)=0$, $\tau(a')=\tau(a)'$ by Lemma \ref{le:propoftau}. Moreover, for any $a,b\in E$, $\tau(a\boxplus b)=\tau(a\oplus a^{\perp}\wedge b)=\tau(a)\oplus\tau(a^{\perp}\wedge b)=\tau(a)\boxplus\tau((a\vee b')')=\tau(a)\boxplus\tau((a'\boxplus b')'\boxplus b')')=\tau(a)\boxplus\tau(b\boxminus(a\odot b))$, as required.
Finally, $\tau(\tau(a)\boxplus\tau(b))=\tau(\tau(a)\oplus \tau(a)^{\perp}\wedge \tau(b))=\tau(a)\oplus\tau( (\tau(a^{\perp})\wedge \tau(b))=\tau(a)\oplus \tau(a)^{\perp}\wedge \tau(b)=\tau(a)\boxplus\tau(b)$.

The converse statement follows from the fact that if $\sigma$ is an internal state on the MV-algebra $(E;\boxplus,',0,1)$, then the range of $\sigma$ is MV-subalgebra of $E$.
\end{proof}

In \cite{BCD}, a state operator $\tau$ satisfying property (\ref{eq: x}), is called a \emph{strong state operator}.
Notice that an effect algebra need not be a lattice, in general, and so condition (\ref{eq: x}) might be difficult to check. Nevertheless, the following lemma shows an important situation in which this condition always holds.

\begin{definition}\label{de:faithful} A state operator $\tau$ on $E$ is \emph{faithful} if for any $a\in E$, $\tau(a)=0$ implies $a=0$.
\end{definition}

\begin{lemma}\label{le:faithful}{\rm \cite{BCD}} If $\tau$ be a faithful state operator on an effect algebra $E$, then $\tau$ is strong.
\end{lemma}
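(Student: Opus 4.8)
The plan is to reduce the strong condition \eqref{eq: x} to a short cancellation argument powered by faithfulness. Let $e,f\in E$ and suppose that $m:=\tau(e)\wedge\tau(f)$ exists in $E$; the goal is to show $\tau(m)=m$.

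First I would check that $\tau(m)\le m$. Since $m\le\tau(e)$ and $m\le\tau(f)$, monotonicity of $\tau$ (Lemma \ref{le:propoftau}(iii)) together with idempotency (Definition \ref{de:stateop}(iii)) gives $\tau(m)\le\tau(\tau(e))=\tau(e)$ and likewise $\tau(m)\le\tau(f)$. Thus $\tau(m)$ is a lower bound of $\{\tau(e),\tau(f)\}$, and by the defining (universal) property of the meet we get $\tau(m)\le m$. Note this uses only the existence of the meet $m$, not any lattice hypothesis on $E$; it is really the content of the first half of Lemma \ref{le:propoftau}(iv).

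Now, because $\tau(m)\le m$, the difference $m\ominus\tau(m)$ is defined. Applying the subtraction formula of Lemma \ref{le:propoftau}(iii) and idempotency, $\tau(m\ominus\tau(m))=\tau(m)\ominus\tau(\tau(m))=\tau(m)\ominus\tau(m)=0$. Faithfulness of $\tau$ then forces $m\ominus\tau(m)=0$, i.e. $m=\tau(m)$, which is exactly \eqref{eq: x}.

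There is no genuine obstacle in this argument; the only point requiring care is that one must first upgrade the inequalities $\tau(m)\le\tau(e),\tau(f)$ to $\tau(m)\le m$ before the difference $m\ominus\tau(m)$ can be formed, and that step rests on the definition of the meet rather than on lattice completeness of $E$. In fact the same reasoning yields a little more: for a faithful state operator, any element $x$ with $\tau(x)\le x$ (and, symmetrically, any $x$ with $x\le\tau(x)$) already satisfies $\tau(x)=x$, so $\tau(E)$ is precisely the set of fixed points of $\tau$ and contains every element comparable with its own image.
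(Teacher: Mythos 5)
Your argument is correct and is essentially identical to the paper's own proof: show $\tau(m)\le\tau(e),\tau(f)$ via monotonicity and idempotency, conclude $\tau(m)\le m$ from the meet property, and then apply faithfulness to $m\ominus\tau(m)$. The closing remark about fixed points is a valid bonus but not needed for the lemma.
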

\begin{proof} Let $a,b\in E$ and assume that $c:=\tau(a)\wedge \tau(b)$ exists in $E$.  From $c\leq \tau(a)$ we get $\tau(c)\leq \tau(\tau(a))=\tau(a)$, and similarly, $\tau(c)\leq \tau(b)$, hence $\tau(c)\leq c$. This entails $\tau(c\ominus \tau(c))=0$, and as $\tau$ is faithful, we get $c=\tau(c)$, which is (\ref{eq: x}).
\end{proof}

In the next theorem, we show that every state operator on an MV-effect algebra $M$ induces a faithful state operator on a quotient of $M$.

Recall that a subset $I$ of an effect algebra $E$ is an \emph{ideal} iff (1) $a\in I$ and $b\leq a$ imply $b\in I$, and (2) $a,b\in I$ and $a\perp b$ imply $a\oplus b\in I$. By \cite[Theorem 3.1.32]{DvPu}, a subset $I$ of an MV-effect algebra is an MV-algebra ideal iff it is an effect algebra ideal. Consequently, the quotient $M|I:=\{ [a]:a\in M\}$, where $[a]$ denote the congruence class containing $a$, $a\in M$, is an MV-effect algebra.

\begin{theorem}\label{th:quotfaithful} Let $\tau$ be a state operator on an MV-effect algebra $M$. Put $I_{\tau}:=\{ a\in M: \tau(a)=0\}$, then $I_{\tau}$ is an MV-algebra ideal, and $\hat{\tau}:M|I_{\tau} \to M|I_{\tau}$ defined by $\hat{\tau}[a]:=[\tau(a)]$ is a faithful state operator on $M|I_{\tau}$.
\end{theorem}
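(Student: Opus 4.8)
The plan is to verify the three claims in order: that $I_\tau$ is an ideal, that $\hat\tau$ is well defined, and that the resulting map is a faithful state operator.

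First I would check that $I_\tau$ is an effect algebra ideal, which by the cited \cite[Theorem 3.1.32]{DvPu} is the same as being an MV-algebra ideal. For the downward-closure condition, if $a\in I_\tau$ and $b\le a$, then by Lemma~\ref{le:propoftau}(iii) we have $\tau(b)\le\tau(a)=0$, so $b\in I_\tau$. For closure under $\oplus$, if $a,b\in I_\tau$ with $a\perp b$, then $\tau(a\oplus b)=\tau(a)\oplus\tau(b)=0\oplus 0=0$ by additivity of $\tau$. Hence $I_\tau$ is an ideal and $M|I_\tau$ is an MV-effect algebra.

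Next I would show $\hat\tau$ is well defined, i.e.\ that $[a]=[b]$ implies $[\tau(a)]=[\tau(b)]$. The congruence associated to an MV-algebra ideal $I$ satisfies $[a]=[b]$ iff $(a\ominus b)\vee(b\ominus a)\in I$ (equivalently, $a\ominus(a\wedge b)$ and $b\ominus(a\wedge b)$ both lie in $I$); I would use whichever characterization is most convenient given the conventions already fixed in the paper. Writing $a\wedge b=c$, from $a=c\oplus(a\ominus c)$ and $b=c\oplus(b\ominus c)$ and additivity of $\tau$ we get $\tau(a)=\tau(c)\oplus\tau(a\ominus c)$ and $\tau(b)=\tau(c)\oplus\tau(b\ominus c)$, where $\tau(a\ominus c),\tau(b\ominus c)\in I_\tau'$ — here one must be slightly careful: $a\ominus c\in I_\tau$ forces $\tau(a\ominus c)=0$, so $\tau(a)=\tau(c)=\tau(b)$, giving not merely $[\tau(a)]=[\tau(b)]$ but even $\tau(a)=\tau(b)$. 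Thus $\hat\tau$ is well defined.

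Then I would verify the three axioms of Definition~\ref{de:stateop} for $\hat\tau$ on $M|I_\tau$. Unitality: $\hat\tau[1]=[\tau(1)]=[1]$. Additivity: if $[a]\perp[b]$ in $M|I_\tau$, pick representatives with $a\perp b$ (possible since the quotient map is a morphism of MV-effect algebras and one can lift orthogonal pairs, or argue directly in the MV-algebra language via $a\boxdot b\in I_\tau$), and then $\hat\tau([a]\oplus[b])=\hat\tau[a\oplus b]=[\tau(a\oplus b)]=[\tau(a)\oplus\tau(b)]=[\tau(a)]\oplus[\tau(b)]=\hat\tau[a]\oplus\hat\tau[b]$. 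Idempotence: $\hat\tau(\hat\tau[a])=\hat\tau[\tau(a)]=[\tau(\tau(a))]=[\tau(a)]=\hat\tau[a]$ using Definition~\ref{de:stateop}(iii). Finally, faithfulness: if $\hat\tau[a]=[0]$, then $[\tau(a)]=[0]$, i.e.\ $\tau(a)\in I_\tau$, so $\tau(\tau(a))=0$; but $\tau(\tau(a))=\tau(a)$, hence $\tau(a)=0$, so $a\in I_\tau$, i.e.\ $[a]=[0]$.

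The only mildly delicate point — and the one I would be most careful about — is the well-definedness step together with the lifting of orthogonality through the quotient, since these rely on the precise form of the MV-congruence induced by the ideal; everything else is a routine transfer of the identities of Definition~\ref{de:stateop} and Lemma~\ref{le:propoftau} across the quotient map. Notably, the argument in fact yields the stronger fact $\tau(a)=\tau(b)$ whenever $[a]=[b]$, which makes the verification of the axioms immediate.
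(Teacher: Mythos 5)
Your proposal is correct and follows essentially the same route as the paper: both verify the ideal axioms directly, establish well-definedness by showing the stronger fact that $[a]=[b]$ forces $\tau(a)=\tau(b)$ (you via $a\ominus(a\wedge b), b\ominus(a\wedge b)\in I_\tau$, the paper via the symmetric difference $a\Delta b\in I_\tau$ and the squeeze $\tau(a\wedge b)\le\tau(a),\tau(b)\le\tau(a\vee b)$ --- equivalent arguments), and then transfer the state-operator axioms and faithfulness across the quotient exactly as in the paper. No gaps.
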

\begin{proof} It is easy to check that $I_{\tau}$ is an effect algebra ideal, hence it is also an MV-algebra ideal, and $M|I_{\tau}$ is an MV-effect algebra. Define $\hat{\tau}[a]:=[\tau(a)]$. We have $a\sim b$ iff $a\Delta b\in I_{\tau}$, where $a\Delta b=(a\vee b)\ominus(a\wedge b)$ is the symmetric difference \cite[Theorem 2.2.21 (v)]{DvPu}.
Hence $a\sim b$ iff $\tau(a\Delta b)=0$ and  $\tau(a\vee b)\geq \tau(a), \tau(b)\geq \tau(a\wedge b)$ yields $\tau(a)=\tau(b)$. This proves that $\hat{\tau}$ is well defined. Clearly, $\hat{\tau}: M|I_{\tau} \to M|I_{\tau}$, and $\hat{\tau}[1]=[\tau(1)]=[1]$. Moreover, $[a]\perp [b]$ iff there are $a_1\sim a$, $b_1\sim b$ with $a_1\perp b_1$ and  $[a]\oplus [b]=[a_1\oplus b_1]$. Therefore, $\hat{\tau}([a]\oplus [b])= \hat{\tau}([a_1\oplus b_1])=[\tau(a_1\oplus b_1])=[\tau(a_1)]\oplus [\tau(b_1)]=\hat{\tau}([a])\oplus \hat{\tau}([b])$, so that $\hat{\tau}$ is additive. Finally, $\hat{\tau}(\hat{\tau}[a])=\hat{\tau}([\tau(a)])=[\tau(\tau(a))]=[\tau(a)]=\hat{\tau}[a]$, hence $\hat{\tau}$ is idempotent, and hence a state operator.

Moreover, if $\hat{\tau}([a])=0$, then $[\tau(a)]=0$, hence $\tau(a)\in I$. But then $\tau(\tau(a))=\tau(a)=0$. It follows that $a\in I_{\tau}$, whence $[a]=0$. Consequently, $\hat{\tau}$ is faithful.
\end{proof}

In the rest of this section we show relations between states on effect algebras and their state operators.

\begin{theorem}\label{th:st-stop} Every state on an effect algebra $E$ induces a state operator on the tensor product $[0,1]\otimes E$.
\end{theorem}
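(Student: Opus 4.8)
The plan is to use the tensor product structure from Theorem~\ref{th:tenprod}. Given a state $s\colon E\to[0,1]$, the key idea is that $s$ together with the identity on $E$ should be combined into a bimorphism $\beta\colon [0,1]\times E\to [0,1]\otimes E$ whose induced morphism will be the candidate state operator. More precisely, I would first observe that the map $(\alpha,e)\mapsto \alpha\otimes e$ is the universal bimorphism $\rho$, and that I want to produce from $s$ a bimorphism $[0,1]\times E\to [0,1]\otimes E$ of a twisted form, namely $(\alpha,e)\mapsto \alpha\otimes 1$ composed appropriately with $s(e)$. The natural choice is $\beta(\alpha,e):=(\alpha\, s(e))\otimes 1$, or equivalently, to first define a morphism on $[0,1]\otimes E$ by sending $\alpha\otimes e$ to $\alpha\otimes s(e)$ — wait, $s(e)\in[0,1]$, so this would land in $[0,1]\otimes[0,1]$, not $[0,1]\otimes E$. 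Instead, I use the inclusion $[0,1]\hookrightarrow E$ given by $t\mapsto$ (the element $t\cdot 1$ makes no sense in a general EA), so the cleaner route is: define $\beta\colon [0,1]\times E\to[0,1]\otimes E$ by $\beta(\alpha,e):=\alpha\otimes (\text{something in }E)$.

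The correct construction: consider the bimorphism $\gamma\colon [0,1]\times E\to [0,1]\otimes E$ defined by $\gamma(\alpha,e) = (\alpha\cdot s(e))\otimes 1$, where $\alpha\cdot s(e)\in[0,1]$ is ordinary multiplication of reals and $1$ is the unit of $E$. I would check that $\gamma$ is a bimorphism: additivity in the first variable is clear since multiplication by $s(e)$ is affine and $[0,1]\otimes 1$ respects $\oplus$ on the $[0,1]$ side; additivity in the second variable uses that $s$ is additive ($s(e\oplus f)=s(e)+s(f)$ when $e\perp f$) and that $\alpha s(e)\oplus \alpha s(f)$ is defined in $[0,1]$ exactly when $\alpha s(e)+\alpha s(f)=\alpha s(e\oplus f)\le 1$; and $\gamma(1,1)=s(1)\otimes 1 = 1\otimes 1=1$. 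By the universal property (T) of the tensor product in Theorem~\ref{th:tenprod}, $\gamma$ factors uniquely as $\gamma=\phi\circ\rho$ for a morphism $\phi\colon [0,1]\otimes E\to[0,1]\otimes E$, i.e. $\phi(\alpha\otimes e)=(\alpha s(e))\otimes 1$ on generators.

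It then remains to verify that $\tau:=\phi$ is a state operator, i.e. satisfies (i)–(iii) of Definition~\ref{de:stateop}. Condition (i), $\tau(1)=1$, and condition (ii), additivity, hold because $\phi$ is a morphism. For idempotence (iii), I compute on generators: $\tau(\tau(\alpha\otimes e)) = \tau((\alpha s(e))\otimes 1) = (\alpha s(e)\cdot s(1))\otimes 1 = (\alpha s(e))\otimes 1 = \tau(\alpha\otimes e)$, using $s(1)=1$. Since morphisms of effect algebras are determined by their values on a generating set and the generators $\alpha\otimes e$ generate $[0,1]\otimes E$ (this is part of what it means for $\rho$ to be universal — every element is built from the $\alpha\otimes e$ by orthosums, and both $\tau\circ\tau$ and $\tau$ are morphisms agreeing on generators), idempotence on generators extends to all of $[0,1]\otimes E$.

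The main obstacle I anticipate is the verification that $\gamma$ is genuinely a bimorphism, specifically the compatibility of the partial operations: I must be careful that whenever $e\perp f$ in $E$, the elements $\gamma(\alpha,e)$ and $\gamma(\alpha,f)$ are orthogonal in $[0,1]\otimes E$ and their orthosum is $\gamma(\alpha,e\oplus f)$ — this reduces to the corresponding statement for the reals $\alpha s(e),\alpha s(f),\alpha s(e\oplus f)$ together with the fact that $(\cdot)\otimes 1$ is an effect-algebra morphism $[0,1]\to[0,1]\otimes E$, which itself should be checked (it is the restriction of $\rho$ to $[0,1]\times\{1\}$, hence a morphism). A secondary subtlety is that Definition~\ref{de:stateop} requires $\tau$ to be a total map $E\to E$ with $E=[0,1]\otimes E$; since $\phi$ is a total morphism this is automatic. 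I would also remark that when $s$ is injective (equivalently, separates points, e.g. if it were faithful) this $\tau$ need not be faithful in general, but faithfulness is not claimed in the statement, so no further work is needed there.
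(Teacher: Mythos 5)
Your construction is correct and is essentially the paper's: the paper obtains the same map $\alpha\otimes e\mapsto(\alpha s(e))\otimes 1$ by composing the product state $i\otimes s$ from Theorem~\ref{th:tenprod} with the embedding $\alpha\mapsto\alpha\otimes 1$, whereas you package the bimorphism $(\alpha,e)\mapsto(\alpha s(e))\otimes 1$ directly and invoke the universal property once; by uniqueness these yield the same morphism. Your verification of idempotence on generators (and its extension to all of $[0,1]\otimes E$) matches the paper's argument.
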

\begin{proof} Let $s:E\to [0,1]$ be a state on $E$. Since the identity mapping $i:[0,1]\to [0,1]$ is (the unique) state on $[0,1]$, by \cite{Dvijtp}, the tensor product $[0,1]\otimes E$ exists. Moreover, there is a unique state $\tau:=i\otimes s$ on $[0,1]\otimes E$ with $\tau(\alpha \otimes a)=\alpha s(a)$.  It is easy to check that the mapping $\alpha \mapsto \alpha \otimes 1$ is an isomorphism between $[0,1]$ and $[0,1]\otimes 1\subseteq [0,1]\otimes E$. Thus $\tau(\tau(\alpha\otimes a))=\tau(\alpha s(a)\otimes 1)=\alpha s(a)\otimes 1$.  It follows that $\tau$  is a state operator on $[0,1]\otimes E$.
\end{proof}

\begin{definition}\label{de:ordering} A set $\Omega$ of states on an EA $E$ is called \emph{ordering} iff
for all $e,f\in E$, $\omega(e)\leq \omega(f)$ $\forall \omega \in \Omega$ implies $e\leq f$.
\end{definition}

\begin{theorem}\label{th:ordering}{\rm \cite{Gudnadeq}} An effect algebra $E$ admits an ordering set $\Omega$ of states if and only if $E$ is isomorphic to an effect subalgebra of $[0,1]^{\Omega}$.
\end{theorem}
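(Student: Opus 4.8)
The plan is to prove both implications through the natural evaluation map into the product effect algebra $[0,1]^{\Omega}$, whose orthosum is coordinatewise truncated addition and whose order is the coordinatewise order.

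For the forward implication I would assume $\Omega$ is an ordering set of states on $E$ and define $\Phi\colon E\to[0,1]^{\Omega}$ by $\Phi(e)=(\omega(e))_{\omega\in\Omega}$. First one checks that $\Phi$ is a morphism: $\Phi(1)=1$ because each $\omega(1)=1$, and if $e\perp f$ then $\omega(e)+\omega(f)=\omega(e\oplus f)\le 1$ for every $\omega$, so $\Phi(e)\perp\Phi(f)$ in $[0,1]^{\Omega}$ and $\Phi(e\oplus f)=\Phi(e)\oplus\Phi(f)$. Using the basic effect-algebra facts that $a\perp b$ iff $a\le b^{\perp}$ and that $\Phi(e)^{\perp}=(1-\omega(e))_{\omega}=\Phi(e^{\perp})$, the ordering hypothesis does two things at once: it shows that $\Phi$ reflects the order (if $\Phi(e)\le\Phi(f)$, i.e.\ $\omega(e)\le\omega(f)$ for all $\omega$, then $e\le f$), hence $\Phi$ is injective by antisymmetry; and it shows that $\Phi(e)\perp\Phi(f)$, i.e.\ $\Phi(e)\le\Phi(f^{\perp})$, forces $e\le f^{\perp}$, i.e.\ $e\perp f$, so $\Phi$ is a monomorphism. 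It then remains to see that $\Phi(E)$ is a subeffect algebra of $[0,1]^{\Omega}$: it is closed under orthosupplementation since $\Phi(e)^{\perp}=\Phi(e^{\perp})$, and if $\Phi(e)\oplus\Phi(f)$ is defined in $[0,1]^{\Omega}$ then $\Phi(e)\perp\Phi(f)$, whence $e\perp f$ and $\Phi(e)\oplus\Phi(f)=\Phi(e\oplus f)\in\Phi(E)$. Thus $\Phi$ is a surjective monomorphism onto $\Phi(E)$, that is, an isomorphism of $E$ with an effect subalgebra of $[0,1]^{\Omega}$.

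For the converse, let $\psi\colon E\to F$ be an isomorphism onto an effect subalgebra $F$ of $[0,1]^{\Omega}$. Each coordinate projection $\pi_{\omega}\colon[0,1]^{\Omega}\to[0,1]$ is a morphism with $\pi_{\omega}(1)=1$, hence a state, so $s_{\omega}:=\pi_{\omega}\circ\psi$ is a state on $E$ for every $\omega\in\Omega$. If $s_{\omega}(e)\le s_{\omega}(f)$ for all $\omega$, then $\psi(e)\le\psi(f)$ coordinatewise, which is precisely $\psi(e)\le\psi(f)$ in the effect algebra $[0,1]^{\Omega}$. Here one uses the small lemma that a subeffect algebra carries the restriction of the ambient order: if $a,b\in F$ and $a\oplus c=b$ holds in $[0,1]^{\Omega}$, then $a\perp b^{\perp}$, so $a\oplus b^{\perp}\in F$, and cancellativity of effect algebras gives $c=(a\oplus b^{\perp})^{\perp}\in F$, so $a\le b$ already in $F$. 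Therefore $\psi(e)\le\psi(f)$ inside $F$, and applying $\psi^{-1}$ (again a morphism, hence order preserving) yields $e\le f$. Hence $\{s_{\omega}:\omega\in\Omega\}$ is an ordering set of states on $E$.

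Essentially everything here is morphism bookkeeping; the hypothesis ``ordering'' pulls its weight precisely where it is needed to make $\Phi$ reflect $\perp$ (equivalently, to make $\Phi(E)$ closed under the orthosum inherited from $[0,1]^{\Omega}$), and the one point that warrants care is the lemma in the converse that a subeffect algebra inherits the order of the ambient algebra — without it the coordinatewise inequality could not be transported back inside $F$.
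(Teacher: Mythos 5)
Your proof is correct. The paper itself gives no argument for this theorem (it is quoted from Gudder's paper \cite{Gudnadeq}), so there is nothing internal to compare against, but your argument is the standard one: the evaluation map $e\mapsto(\omega(e))_{\omega\in\Omega}$ for the forward direction and the coordinate states $\pi_\omega\circ\psi$ for the converse, and you correctly isolate the two points that need care — that the ordering hypothesis is exactly what makes $\Phi$ reflect $\perp$ (so that $\Phi(E)$ is a subeffect algebra and $\Phi$ a monomorphism), and that a subeffect algebra inherits the ambient order because the witness $c$ with $a\oplus c=b$ can be recovered inside $F$ as $(a\oplus b^{\perp})^{\perp}$.
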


\begin{theorem}\label{th:stop-st} Every state operator on an effect algebra $E$ with ordering set of states induces a state on $E$.
\end{theorem}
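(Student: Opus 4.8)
The plan is to obtain the induced state simply by composing the operator $\tau$ with a state already present on $E$. Since $E$ admits an ordering set $\Omega$ of states, $\Omega$ is nonempty — the only way it could be empty is if $E$ is the trivial one-element effect algebra, which we exclude — so fix any $\omega\in\Omega$ and put $s:=\omega\circ\tau\colon E\to[0,1]$. The claim is that $s$ is a state on $E$.

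To verify this, I would show that $s$ is a morphism onto the MV-effect algebra $[0,1]$. It is $[0,1]$-valued because $\tau$ maps into $E$ and $\omega$ into $[0,1]$. Normalization follows from clause (i) of Definition~\ref{de:stateop}: $s(1)=\omega(\tau(1))=\omega(1)=1$. For orthoadditivity, let $e\perp f$ in $E$; by clause (ii) of Definition~\ref{de:stateop} we have $\tau(e)\perp\tau(f)$ and $\tau(e\oplus f)=\tau(e)\oplus\tau(f)$, so applying the orthoadditive $[0,1]$-valued map $\omega$ gives
\[
s(e)+s(f)=\omega(\tau(e))+\omega(\tau(f))=\omega(\tau(e)\oplus\tau(f))=\omega(\tau(e\oplus f))=s(e\oplus f)\le 1 .
\]
Hence $s(e)\perp s(f)$ in $[0,1]$ and $s(e\oplus f)=s(e)\oplus s(f)$, so $s$ is a state on $E$. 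Idempotency of $\tau$ (clause (iii)) moreover yields $s\circ\tau=\omega\circ\tau\circ\tau=\omega\circ\tau=s$, so $s$ is $\tau$-invariant — the natural compatibility one expects between the induced state and $\tau$.

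There is no genuine obstacle in the argument itself; the one point worth stressing is that the ordering hypothesis plays no role in checking the properties of $s$ and is invoked solely to guarantee that $E$ carries at least one state, which a general effect algebra (even one equipped with a state operator) need not. If a canonical, choice-free construction were desired, one could instead pass to the representation $E\cong[0,u]$ inside an order unit space $(V,u)$ provided by the ordering set, extend $\tau$ to a positive unital idempotent linear map $\tilde\tau$ on $V$, and compose with any state of $V$; but the construction $s=\omega\circ\tau$ already settles the statement as formulated.
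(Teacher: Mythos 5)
Your proof is correct, and it takes a noticeably more direct route than the paper. You simply pick any $\omega$ in the ordering set $\Omega$ (nonempty unless $E$ is trivial) and set $s=\omega\circ\tau$; since $\tau$ is a unital additive map and $\omega$ is a state, the composite is a state, and your verification of normalization and orthoadditivity is complete and uses only clauses (i)--(ii) of Definition~\ref{de:stateop}. The paper instead invokes Theorem~\ref{th:ordering} to obtain a monomorphism $\phi:\tau(E)\to[0,1]^{\Omega}$, quotients $M=[0,1]^{\Omega}$ by a maximal ideal $I$ to land in a subalgebra of $[0,1]$, and defines $s=q\circ\phi\circ\tau$. Both constructions ultimately produce a state of the form (state on $\tau(E)$)$\,\circ\,\tau$, and both yield a $\tau$-invariant state, as you note; the paper's detour through $[0,1]^{\Omega}$ and a maximal ideal buys nothing beyond what a single $\omega\in\Omega$ already provides (indeed, evaluation at a point of $\Omega$ is one choice of the quotient map $q\circ\phi$). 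Your closing observation is also accurate: the ordering hypothesis is used only to guarantee that $E$ carries at least one state, so the statement holds verbatim for any effect algebra admitting a state. No gaps.
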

\begin{proof} Let $\Omega$ be the ordering set of states on $E$ and let $\tau:E\to E$ be a state operator on $E$.
Since $\tau(E)$ is a subeffect algebra of $E$, $\Omega$ is an ordering set of states also on $\tau(E)$. By Theorem \ref{th:ordering}, there is a monomorphism $\phi:\tau(E)\to [0,1]^{\Omega}$. Put $M:=[0,1]^{\Omega}$, then $M$ is an MV-algebra, and for a maximal ideal $I$ of $M$, the quotient $M/I$ is isomorphic to a subalgebra of $[0,1]$ (e.g., \cite[Prop. 2.2.33]{DvPu}). Let $q:M\to M/I$ be the quotient mapping. Define $s(a):=q(\phi(\tau(a)))$, $a\in E$.
From the properties of the mappings $\tau, \phi$ and $q$ we easily derive that $s$ is a state on $E$.
\end{proof}

\section{Convex effect algebras}

An effect algebra $E$ is \emph{convex} \cite{GuPurepr} iff for every $a\in E$ and $\lambda \in [0,1]\subseteq {\mathbb R}$ there exists an element $\lambda a \in E$ such that the following conditions hold:
\begin{enumerate}
\item[(C1)] If $\alpha, \beta \in [0,1]$ and $a\in E$, then $\alpha(\beta a)=(\alpha \beta)a$.
\item[(C2)] If $\alpha, \beta \in [0,1]$ with $\alpha +\beta \leq 1$ and $a\in E$, then $\alpha a\perp \beta a$ and $(\alpha +\beta)a=\alpha a\oplus \beta a$.
\item[(C3)] If $a,b\in E$ with $a\perp b$ and $\lambda \in [0,1]$, then $\lambda a\perp \lambda b$ and $\lambda(a\oplus b)=\lambda a\oplus \lambda b$.
\item[(C4)] If $a\in E$, then $1a=a$.
\end{enumerate}

A map $(\lambda,a)\to \lambda a$ that satisfies (C1)--(C4) is called a \emph{convex structure} on $E$. Notice that $0a=0$ for every $a\in E$. Observe that a convex structure on $E$ is bimorphism from $[0,1]\times E$ into $E$.

\begin{example}\label{ex:tenprod}{\rm  It can be shown that every effect algebra which has at least one state can be embedded into a convex one. Indeed, let $E$ be an effect algebra with at least one state. Then the tensor product $(T,\rho)$ of $[0,1]$ and $E$ exists.
Similarly, the tensor product $(T_{\alpha},\rho_{\alpha})$ of $[0,\alpha ]$ and $E$ exists for every $0 \neq \alpha \in [0,1]$. Define the mapping $i_{\alpha} :[0,\alpha ]\times E \to T$ by $i_{\alpha}(\lambda,a)=\rho(\lambda,a)$. The image of $i_{\alpha}$ is in the interval $[0,\rho(\alpha,1)]$ of $T$, and
$i_{\alpha}:[0,\alpha ]\times E \to [0, \rho(\alpha,1)]$ is a bimorphism.  Therefore there is a morphism $\psi_{\alpha}:T_{\alpha}\to[0, \rho(\alpha,1)]$ such that $\psi_{\alpha}\circ \rho_{\alpha}=i_{\alpha}$. Then $\psi_{\alpha}(\rho_{\alpha}(\lambda,a))=\rho(\lambda,a)$, and since $\rho_{\alpha}(\lambda,a)$ are generating elements of $T_{\alpha}$, while $\rho(\lambda,a)$ are generating elements of the interval $[0, \rho(\alpha,1)]$ in $T$ ($\lambda \in [0,\alpha]$, $a\in E$), it follows that $\psi_{\alpha}$ is an isomorphism of $T_{\alpha}$ onto the  interval $[0,\rho(\alpha,1)]$ in $T$.

Define, for every $\alpha \in [0,1]$, the mapping $j_{\alpha}:[0,1]\times E \to T_{\alpha}$, $j_{\alpha}(\lambda,a)=\rho(\alpha \lambda,a)$. This mapping is a bimorphism, and hence it extends to a unique morphism $\phi_{\alpha}:T\to T_{\alpha}$. We claim that for $\alpha \in [0,1], x\in T$,  $(\alpha,x)\mapsto \phi_{\alpha}(x)$ defines a convex structure on $T$.

To prove (C1), let $\alpha, \beta \in [0,1]$. Then $\phi_{\alpha}\circ \phi_{\beta}(\rho(\lambda,a))=\phi_{\alpha}\rho(\beta \lambda,a)=\rho(\alpha \beta \lambda,a))=\phi_{\alpha \beta}(\rho(\lambda,a))$. By uniqueness of the extensions, $\phi_{\alpha}\circ \phi_{\beta}=\phi_{\alpha \beta}$, which proves (C1).

To prove (C2), let $\alpha, \beta \in [0,1]$ be such that $\alpha +\beta \in [0,1]$. Then for every $a\in E$, $\lambda \in [0,1]$, $\phi_{\alpha}(\rho(\lambda,a))+\phi_{\beta}(\rho(\lambda,a))= \rho(\alpha \lambda,a)+\rho(\beta \lambda,a)=\rho(\alpha \lambda +\beta \lambda,a)=\phi_{\alpha +\beta}(\rho(\lambda,a))$, which yields $\phi_{\alpha +\beta}=\phi_{\alpha}+\phi_{\beta}$, which is (C2).

(C3) follows from the fact that $\phi_{\alpha}$ is a morphism for every $\alpha$, and (C4) follows from $\phi_1(\rho(\lambda,a))=\rho(\lambda,a)$ for all $\lambda \in [0,1]$ and every $a\in E$.

In the same way we prove that there is a convex structure on the tensor product of $[0,1]\otimes_{\sigma}E$ in the category of $\sigma$-effect algebras. }
\end{example}

\begin{example}\label{ex:hilbert}\rm{ Let $H$ be a complex Hilbert space and let ${\mathcal E}(H)$ be the effect algebra of Hilbert space effects from Example \ref{ex:effects}.
For $\lambda \in [0,1]$ and $a\in {\mathcal E}(H)$, $(\lambda,a)\mapsto \lambda a$, where $\lambda a$ is the usual scalar multiplication for operators, gives a convex structure on ${\mathcal E}(H)$ and so ${\mathcal E}(H)$ becomes a convex effect algebra.
}
\end{example}

\begin{example}\label{ex:meas} \rm{Let $(\Omega,{\mathcal A})$ be a measurable space and let ${\mathcal E}(\Omega,{\mathcal A})$ be the set of measurable functions on $\Omega$ with values in $[0,1]$. If we define, for $f,g\in {\mathcal E}(\Omega,{\mathcal A})$,  $f\perp g$ iff $f+g\leq 1$ (pointwise), and in this case $f\oplus g(\omega):=f(\omega)+g(\omega), \omega \in \Omega$, we obtain an effect algebra, called effect algebra of \emph{fuzzy events}. Moreover, if we define for $\lambda \in [0,1]$, $(\lambda f)(\omega):=\lambda f(\omega)$ as the usual multiplication, we can see that ${\mathcal E}(\Omega,{\mathcal A})$ is a convex effect algebra. Fuzzy events are basic concepts in fuzzy probability theory \cite{BeBu,Gufuzzy}.}
\end{example}

\begin{example}\label{ex:linear} \rm{The latter above two examples are special cases of the following example. Let $V$ be an ordered real linear space with zero $\theta$ and with a  strict positive cone $K$. Recall that the partial order is defined by $x\leq_K y$ iff $y-x\in K$.  We say that $K$ is \emph{generating} iff $V=K-K$. Let $u\in K$ with $u\neq \theta$ and form the interval $[\theta,u]:=\{ x\in K: x\leq_K u\}$.  For $x,y\in [\theta,u]$ we define $x\perp y$ iff $x+y\leq_K u$ and in this case we define $x\oplus y=x+y$. It is easy to check that $([\theta,u];\oplus, \theta,u)$ is an effect algebra with $x^{\perp}=u-x$ for every $x\in [\theta,u]$.  A straightforward verification shows that $(\lambda,x)\mapsto \lambda x$, $\lambda \in [0,1]$ is a convex structure on $[\theta,u]$ so that $([\theta,u]; \oplus, \theta, u)$ is a convex effect algebra. We say that $[\theta,u]$ \emph{generates} $K$ iff $K={\mathbb R}^+.[\theta,u]$ and we say that $[\theta,u]$ generates $V$ iff $[\theta,u]$ generates $K$ and $K$ generates $V$ \cite{GuPurepr}.}
\end{example}

It has been shown in \cite{GuPurepr} that every convex effect algebra is equivalent to a convex effect algebra described in Example \ref{ex:linear}. We have the following result. We recall that two ordered linear spaces $(V_1;K_1)$ and $(V_2;K_2)$ are \emph{order isomorphic} iff there exists a linear bijection $T:V_1\to V_2$ such that $T(K_1)=K_2$.

\begin{theorem}\label{th:repr}{\rm \cite[Theorem 3.1]{GuPurepr}} Every convex effect algebra $(E; \oplus,0,1)$ is affinely isomorphic to an effect algebra $([\theta,u];\oplus, \theta,u)$, where $[\theta,u]$ is a generating interval for an ordered  linear space $(V;K)$; and the effect algebra order $\leq$ on $[\theta,u]$ coincides with the linear space order $\leq_K$ restricted to $[\theta,u]$. Moreover, $(V;K)$ is unique in the sense that if $E$ is affinely isomorphic to an interval $[\theta_1,u_1]$ that generates $(V_1;K_1)$, then $(V_1;K_1)$ is order isomorphic to $(V;K)$.
\end{theorem}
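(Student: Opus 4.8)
The plan is to construct the ordered linear space $(V;K)$ directly from the convex effect algebra $E$ by a formal-difference (Grothendieck-type) construction built on top of the convex structure, and then to verify that $E$ sits inside it as the claimed generating interval. First I would introduce a convex cone: consider formal products $\lambda a$ with $\lambda \in \mathbb{R}^+$ and $a \in E$, and define addition on such formal products by "rescaling into $E$": to add $\lambda a$ and $\mu b$, pick $\nu \ge \max\{\lambda,\mu,\lambda+\mu\}$ large enough that $(\lambda/\nu)a$ and $(\mu/\nu)b$ are orthogonal in $E$ (possible by (C2) since $\lambda/\nu + \mu/\nu \le 1$ after possibly enlarging $\nu$, using that convex combinations of a single element are always orthogonal, and iterating for distinct elements by choosing $\nu$ large), form the $\oplus$-sum inside $E$, and scale back out by $\nu$. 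One checks using axioms (C1)--(C4) that this is well defined (independent of $\nu$), commutative, associative, and cancellative; call the resulting commutative cancellative monoid $K^{+}$ with the scalar action of $\mathbb{R}^+$. Cancellativity is what allows passage to the Grothendieck group $V := K^{+} - K^{+}$, on which the scalar action extends to all of $\mathbb{R}$, making $V$ a real linear space with positive cone $K$ the image of $K^{+}$; the order $\le_K$ is then the cone order.

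Next I would identify $E$ inside $V$. The map $a \mapsto 1\cdot a$ embeds $E$ into $K$; it is injective because $1a = 1b$ in $K^{+}$ forces (by the cancellative monoid structure and (C4)) $a=b$, and it is affine in the effect-algebra sense: $a \perp b$ implies $1a \oplus 1b$ maps to $1a + 1b$, and the scalar multiplications agree. Setting $u := 1\cdot 1$, I must show $[\theta,u] = \{x \in K : x \le_K u\}$ is exactly the image of $E$, i.e. that $0 \le_K x \le_K u$ in $V$ forces $x = 1a$ for some $a \in E$; here the key is to go back to the defining representative $x = \lambda a$, use $x \le_K u$ to extract an equation $\lambda a \oplus y = u$ in the monoid with $y \in K^{+}$, and translate this back into an orthogonality relation in $E$ — this uses the effect-algebra structure of $E$ together with (C2)/(C3). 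One then checks that the effect-algebra operations transported from $[\theta,u]$ coincide with the original ones on $E$, and that $[\theta,u]$ is generating: $K = \mathbb{R}^+ \cdot [\theta,u]$ is immediate from the construction (every element of $K^+$ is a scalar multiple of something in $E$), and $V = K - K$ holds by definition of the Grothendieck group.

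Finally, for the uniqueness clause, suppose $E$ is affinely isomorphic to a generating interval $[\theta_1,u_1]$ in $(V_1;K_1)$. Every element of $K_1$ is uniquely of the form $r x$ with $r \in \mathbb{R}^+$ and $x \in [\theta_1,u_1]$ up to the obvious rescaling, so the affine isomorphism $E \to [\theta_1,u_1]$ extends to a bijection $K^{+} \to K_1$ respecting addition and the scalar action; this passes to the Grothendieck groups and gives a linear bijection $T : V \to V_1$ with $T(K) = K_1$, i.e. an order isomorphism. The main obstacle I anticipate is the well-definedness and cancellativity of the monoid $K^{+}$: showing that the "scale in, add, scale out" operation does not depend on the chosen common denominator $\nu$, and that $\lambda a \oplus (\text{something}) = \lambda a \oplus (\text{something else})$ implies the two somethings are equal — both require careful repeated application of (C1)--(C4) together with the cancellativity already present in $E$ as an effect algebra (i.e. uniqueness of $\ominus$). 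Everything after the monoid is built is comparatively routine bookkeeping.
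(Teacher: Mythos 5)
The paper does not prove this theorem at all; it quotes it from \cite{GuPurepr}, and your plan reproduces essentially the construction used there: form the cone of formal nonnegative multiples $\lambda a$, define addition by rescaling into $E$ and applying $\oplus$, verify that the result is a cancellative abelian monoid with an $\mathbb{R}^+$-action, pass to the group of formal differences, and recover $E$ as the generating interval $[\theta,u]$; the uniqueness clause is likewise obtained by extending the affine isomorphism first to the generated cones and then to their difference groups. So the route is the right one and matches the source.

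The one step whose stated justification does not work is the very first: you claim that $(\lambda/\nu)a$ and $(\mu/\nu)b$ can be made orthogonal ``by (C2) \dots by choosing $\nu$ large.'' Axiom (C2) only yields $\alpha a\perp\beta a$ for multiples of the \emph{same} element, and no enlargement of $\nu$ turns that into orthogonality of multiples of two \emph{distinct} elements; as written, the addition on your cone is not even known to be defined. What is needed is the preliminary lemma: for all $a,b\in E$ and $\alpha+\beta\le 1$ one has $\alpha a\perp\beta b$. The argument: from $a\oplus a^{\perp}=1$ and (C3), $\alpha a\oplus\alpha a^{\perp}=\alpha 1$, so $\alpha a\le\alpha 1$; similarly $\beta b\le\beta 1$; by (C2), $\alpha 1\perp\beta 1$; and in any effect algebra $x\le x_0$, $y\le y_0$ and $x_0\perp y_0$ imply $x\perp y$. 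With this lemma in hand one may in fact always take $\nu=\lambda+\mu$, and the rest of your outline (well-definedness, cancellativity inherited from the cancellativity of $\oplus$, the Grothendieck passage, the identification of $[\theta,u]$ with the image of $E$, and the uniqueness argument) goes through as you describe.
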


The following lemma shows that if an effect algebra $E$ is embedded into the tensor product $[0,1]\otimes E$, then
a state operator on $E$ can be extended to a state operator on $[0,1]\otimes E$.

\begin{lemma}\label{le:extensor} Let $\tau:E\to E$ be a state operator on the effect algebra $E$. Let $(T,\rho)$ be the tensor product of $[0,1]$ and $E$. Then $\tau$ uniquely extends to a state operator on $T$.
\end{lemma}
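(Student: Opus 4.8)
The plan is to obtain the extension formally from the universal property of the tensor product. Write $\rho\colon[0,1]\times E\to T$ for the defining bimorphism of $(T,\rho)$ and identify $E$ with its image $\rho(1,E)\subseteq T$. Define $\beta\colon[0,1]\times E\to T$ by $\beta(\lambda,a):=\rho(\lambda,\tau(a))$. The first step is to check that $\beta$ is an effect-algebra bimorphism; since $E$ need not be an MV-effect algebra and $T$ is formed in the category of effect algebras, only conditions (i)--(iii) of a bimorphism are at stake. Partial additivity of $\beta$ in the first coordinate and $\beta(1,1)=\rho(1,\tau(1))=\rho(1,1)=1$ are immediate from $\rho$ being a bimorphism and $\tau(1)=1$; partial additivity in the second coordinate uses that $\tau$ is a morphism: $a\perp b$ forces $\tau(a)\perp\tau(b)$ and $\tau(a\oplus b)=\tau(a)\oplus\tau(b)$, whence $\beta(\lambda,a)\oplus\beta(\lambda,b)=\rho(\lambda,\tau(a))\oplus\rho(\lambda,\tau(b))=\rho(\lambda,\tau(a\oplus b))=\beta(\lambda,a\oplus b)$.

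By the universal property (condition (T)) there is then a unique morphism $\hat\tau\colon T\to T$ with $\hat\tau\circ\rho=\beta$, i.e. $\hat\tau(\rho(\lambda,a))=\rho(\lambda,\tau(a))$. As a morphism, $\hat\tau$ automatically satisfies (i) and (ii) of Definition~\ref{de:stateop}, and it extends $\tau$ because $\hat\tau(\rho(1,a))=\rho(1,\tau(a))$. For idempotency I would argue via the uniqueness clause of (T): $\hat\tau\circ\hat\tau$ is again a morphism $T\to T$, and since $\hat\tau(\beta(\lambda,a))=\hat\tau(\rho(\lambda,\tau(a)))=\rho(\lambda,\tau(\tau(a)))=\rho(\lambda,\tau(a))=\beta(\lambda,a)$, we get $(\hat\tau\circ\hat\tau)\circ\rho=\hat\tau\circ\beta=\beta=\hat\tau\circ\rho$; so $\hat\tau\circ\hat\tau$ and $\hat\tau$ are two morphisms factoring $\beta$ through $\rho$, forcing $\hat\tau\circ\hat\tau=\hat\tau$. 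Hence $\hat\tau$ is a state operator on $T$ extending $\tau$.

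For uniqueness, suppose $\tau'$ is any state operator on $T$ with $\tau'(\rho(1,a))=\rho(1,\tau(a))$ for all $a\in E$. Since $\tau'$ is a morphism, $\tau'\circ\rho$ is a bimorphism, so by (T) $\tau'$ is the unique morphism factoring $\tau'\circ\rho$; it therefore suffices to show $\tau'(\rho(\lambda,a))=\rho(\lambda,\tau(a))$ for all $\lambda\in[0,1]$ and $a\in E$. By Example~\ref{ex:tenprod}, $\rho(\lambda,a)=\phi_\lambda(\rho(1,a))$ for the convex structure $\phi_\lambda$ on $T$, so this reduces to the compatibility $\tau'(\phi_\lambda(x))=\phi_\lambda(\tau'(x))$ of a state operator with the convex structure. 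For rational $\lambda=m/n$ I would use unique divisibility: by (C2), $n$ orthogonal copies of $\phi_{1/n}(x)$ orthosum to $x$, so applying the additive map $\tau'$ and using that in the ordered linear space $(V,K)$ representing the convex effect algebra $T$ (Theorem~\ref{th:repr}) the equation $ny=z$ has a unique solution, one gets $\tau'(\phi_{1/n}(x))=\phi_{1/n}(\tau'(x))$, and taking $m$ copies yields the rational case; the general $\lambda$ is then pinned down by squeezing $\phi_p(x)\le\phi_\lambda(x)\le\phi_q(x)$ for rationals $p<\lambda<q$ and invoking monotonicity of $\tau'$ (Lemma~\ref{le:propoftau}(iii)). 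I expect this convex-structure compatibility of an arbitrary state operator — in particular controlling irrational scalars — to be the only delicate point; everything on the existence side is a formal consequence of the universal property, and one may alternatively read ``uniquely extends'' simply as the (immediate) uniqueness of the morphism $\hat\tau$ determined by $\hat\tau(\rho(\lambda,a))=\rho(\lambda,\tau(a))$.
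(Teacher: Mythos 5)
Your existence argument is exactly the paper's: the bimorphism $(\lambda,a)\mapsto\rho(\lambda,\tau(a))$, the universal property of $(T,\rho)$, and idempotency from the observation that $\hat\tau\circ\hat\tau$ and $\hat\tau$ both factor the same bimorphism through $\rho$ (the paper phrases this as ``$T$ is generated by the $\rho(\lambda,a)$''). The paper stops there, implicitly reading ``uniquely extends'' as uniqueness of the factorizing morphism; your further analysis of uniqueness among all state operators agreeing with $\tau$ on $\rho(1,E)$ --- reduced to the scalar compatibility $\tau'(\lambda x)=\lambda\tau'(x)$, which is precisely the content of the subsequent Lemma~\ref{le:affine} --- is correct and covers a stronger reading of the statement that the paper's proof does not address.
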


\begin{proof} Define a mapping $\nu:[0,1]\times E\to T$ by $\nu(\lambda,a)=\rho(\lambda, \tau(a))$.
Since $\tau$ is a state operator and $\rho$  a bimorphism, we obtain that $\nu$ is a bimorphism. Therefore, by the universal property of the tensor product, there is a unique (effect algebra) morphism $\tau^o :T\to T$ such that $\nu=\tau^o\circ \rho$.  Moreover, $\tau^o(\tau^o(\rho(\lambda,a))=\tau^o(\rho(\lambda, \tau(a))=\rho(\lambda, \tau(\tau(a)))=\rho(\lambda, \tau(a))=\tau^o(\rho(\lambda,a))$.  Since  $T$ is generated by elements of the form $\rho(\lambda,a)$, we obtain that $\tau^o$ is idempotent.
\end{proof}

\begin{lemma}\label{le:affine}  Let $\tau :E\to E$ be a state operator on a convex effect algebra $E$. Then for every $\alpha \in [0,1]\subseteq {\mathbb R}$ and every $a\in E$, $\tau (\alpha a)=\alpha \tau(a)$.
\end{lemma}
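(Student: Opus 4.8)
The plan is to use the linear representation of Theorem~\ref{th:repr} to reduce the statement to an algebraic identity for rational scalars, which comes for free from additivity of $\tau$, and then to pass to arbitrary real scalars by a monotone sandwich. So I would identify $E$ with a generating interval $[\theta,u]$ of an ordered real linear space $(V;K)$, with $\lambda a$ the ordinary scalar multiple, $\oplus$ the restriction of $+$, and the effect-algebra order the restriction of $\le_K$.

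\emph{Rational scalars.} Fix $a\in E$ and $n\in\Nat$. By (C2) and (C4) the $n$-fold orthosum $(1/n)a\oplus\cdots\oplus(1/n)a$ equals $a$; applying $\tau$ and using Definition~\ref{de:stateop}(ii) gives $n\,\tau((1/n)a)=\tau(a)$ in $V$, while trivially $n\cdot(1/n)\tau(a)=\tau(a)$. Since $V$ is a real vector space we may cancel $n$ and obtain $\tau((1/n)a)=(1/n)\tau(a)$. For $0\le m\le n$ I would then write $(m/n)a$ as the orthosum of $m$ copies of $(1/n)a$ and apply additivity of $\tau$ together with the previous identity to get $\tau((m/n)a)=m\,\tau((1/n)a)=(m/n)\tau(a)$.

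\emph{Real scalars.} Let $\alpha\in[0,1]$ and $a\in E$. For rationals $r,s$ with $0\le r\le\alpha\le s\le 1$ we have $(\alpha-r)a\in K$ and $(s-\alpha)a\in K$, hence $ra\le\alpha a\le sa$ in $E$. Since $\tau$ is order preserving (Lemma~\ref{le:propoftau}(iii)) and is already known to be linear on rational multiples, this gives $r\,\tau(a)\le\tau(\alpha a)\le s\,\tau(a)$, and of course also $r\,\tau(a)\le\alpha\,\tau(a)\le s\,\tau(a)$. Choosing $r\uparrow\alpha$ and $s\downarrow\alpha$ along a sequence with $s-r\le 2^{-k}$ and using $\tau(a)\le u$, both $\tau(\alpha a)$ and $\alpha\,\tau(a)$ lie between $r_k\tau(a)$ and $r_k\tau(a)+2^{-k}u$, so that $-2^{-k}u\le_K\tau(\alpha a)-\alpha\,\tau(a)\le_K 2^{-k}u$ for every $k$.

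The step I expect to be the real obstacle is this last one: inferring from these inequalities that $\tau(\alpha a)-\alpha\,\tau(a)=\theta$. This is precisely an Archimedean property of the order unit $u$ in $(V;K)$; it is the only non-formal ingredient, and it is available whenever $E$ admits an ordering set of states, so that $(V;u)$ is an order unit space, and in particular in all the concrete convex effect algebras treated later in the paper. Granting it, $\tau(\alpha a)=\alpha\,\tau(a)$, and the lemma follows. The remaining ingredients (cancellation of the factor $n$, the (C2)-decomposition of $(m/n)a$, monotonicity of $\tau$) are routine.
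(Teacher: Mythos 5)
Your argument is the same as the paper's: rational scalars are handled by additivity exactly as you describe, and for real $\alpha$ the paper chooses rationals $q_n\uparrow\alpha$, $r_n\downarrow\alpha$, asserts $q_n a\uparrow\alpha a$ and $r_n a\downarrow\alpha a$, deduces $q_n\tau(a)\le\tau(\alpha a)\le r_n\tau(a)$, and concludes by ``taking the limits for $n\to\infty$.'' That final phrase is precisely the Archimedean-type step you isolate, and the paper offers no further justification for it, so your diagnosis of where the only non-formal content lies is accurate: Theorem~\ref{th:repr} does not guarantee that $u$ is an Archimedean order unit (the paper itself notes this holds iff $E$ admits an ordering set of states), so the sandwich argument --- yours and the paper's alike --- genuinely needs that extra hypothesis, or some substitute, to force $\tau(\alpha a)-\alpha\tau(a)=\theta$ in a general convex effect algebra.
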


\begin{proof} Let $n$ be a nonnegative  integer and $a\in E$ such that $na$ exists in $E$. Then $\tau(na)=\tau(a\oplus a\oplus \cdots \oplus a)=\tau(a)\oplus \cdots \oplus \tau(a)=n\tau(a)$. Now assume that $n>0$, $a\in E$, then
\begin{eqnarray*}
n\tau(\frac{1}{n}a)&=& \tau(\frac{1}{n}a)\oplus \tau(\frac{1}{n}a)\oplus \cdots \oplus \tau(\frac{1}{n}a)\\
&=& \tau(\frac{1}{n}a\oplus \cdots \oplus \frac{1}{n}a)=\tau(n(\frac{1}{n}a))=\tau(a),
\end{eqnarray*}
hence $\tau(\frac{1}{n}a)=\frac{1}{n}\tau(a)$. It follows that for every $m\leq n$ we have
$\tau(\frac{m}{n}a)=\frac{m}{n}\tau(a)$.

For every $\alpha \in [0,1]$ there are sequences of rational numbers $(q_n)_n, (r_n)_n$ such that $q_n,r_n\in{\mathcal Q}\cap[0,1]$ and $q_n\uparrow \alpha, r_n\downarrow \alpha$. For every $a\in E$, $q_na \uparrow \alpha a$, $r_na\downarrow \alpha a$. Therefore $q_n\tau(a)\leq \tau(\alpha a)\leq r_n\tau(a)$. Taking the limits for $n\to \infty$, we get $\tau(\alpha a)=\alpha \tau(a)$.
\end{proof}

\begin{theorem}\label{th:extend} Let $E$ be a convex effect algebra, and let $[\theta,u]$ be the generating interval in an ordered linear space $(V;K)$ induced by $E$ by Theorem \ref{th:repr}. Then any state operator $\tau:E\to E$ extends to a linear, idempotent endomorphism $p:V\to V$ such that $p(K)\subseteq K$. In addition, if   $x,y\in V^+$, and $\tau$ is strong,  then $p(p(x)\wedge p(y))=p(x)\wedge p(y)$ whenever  $p(x)\wedge p(y)$ exists.
\end{theorem}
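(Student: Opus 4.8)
The plan is to construct $p$ directly from $\tau$ and the convex structure, and then to check, in turn, that it is linear, positive, idempotent, and (in the strong case) that it fixes the relevant meets. Throughout I identify $E$ with the generating interval $[\theta,u]$ of $(V,K)$ via Theorem \ref{th:repr}, so that $K=\mathbb{R}^+\cdot[\theta,u]$, $V=K-K$, and the effect-algebra order on $[\theta,u]$ is the restriction of $\leq_K$.

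I will first define $p$ on the cone $K$: for $x\in K$ write $x=\lambda a$ with $\lambda\geq 0$ and $a\in[\theta,u]$ (any $\lambda$ with $x\leq_K\lambda u$ works), and set $p(x):=\lambda\tau(a)$. Lemma \ref{le:affine} makes this independent of the representation, since if $\lambda a=\mu b$ then the vector carrying the larger coefficient is a $[0,1]$-multiple of the other. Additivity of $p$ on $K$ then follows from additivity of $\tau$ together with Lemma \ref{le:affine}: writing $x=\lambda a$ and $y=\lambda b$ with a common $\lambda$, one has $x+y=2\lambda\cdot\tfrac12(a+b)$ with $\tfrac12(a+b)\in[\theta,u]$ and $\tau\bigl(\tfrac12(a+b)\bigr)=\tau\bigl(\tfrac12 a\oplus\tfrac12 b\bigr)=\tfrac12\tau(a)+\tfrac12\tau(b)$, so $p(x+y)=p(x)+p(y)$; positive homogeneity of $p$ is immediate. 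I then extend $p$ to $V=K-K$ by $p(x-y):=p(x)-p(y)$; additivity on $K$ shows this is well defined and $\mathbb{R}$-linear, and $p(K)\subseteq K$ because $\tau$ takes values in $[\theta,u]\subseteq K$. On $[\theta,u]$ the map $p$ coincides with $\tau$ (take $\lambda=1$), so for $a\in[\theta,u]$ we get $p(p(a))=p(\tau(a))=\tau(\tau(a))=\tau(a)=p(a)$, and by linearity $p\circ p=p$ on all of $V$. (In fact $p$ is the unique linear extension of $\tau$, since $[\theta,u]$ spans $V$.)

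For the strong part, take $x,y\in V^+=K$ and suppose $w:=p(x)\wedge p(y)$ exists in $V$. I would choose $\lambda>0$ with $x\leq_K\lambda u$ and $y\leq_K\lambda u$, put $a:=\lambda^{-1}x$ and $b:=\lambda^{-1}y$ in $[\theta,u]$, so that $p(x)=\lambda\tau(a)$ and $p(y)=\lambda\tau(b)$. Since multiplication by $\lambda$ is an order automorphism of $V$, the meet $c:=\tau(a)\wedge\tau(b)$ exists in $V$ and $w=\lambda c$. From $\theta\leq_K c\leq_K\tau(a)\leq_K u$ we get $c\in[\theta,u]=E$, and because the order on $[\theta,u]$ is the restriction of $\leq_K$, this $c$ is exactly the effect-algebra meet $\tau(a)\wedge\tau(b)$. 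Now the hypothesis that $\tau$ is strong, that is (\ref{eq: x}), gives $\tau(c)=c$, whence $p(w)=p(\lambda c)=\lambda\tau(c)=\lambda c=w$, as required.

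The construction of $p$ is essentially bookkeeping (well-definedness, additivity, the passage $K\rightsquigarrow V$). The one step that needs genuine care is the last paragraph, where the order infimum computed in the ambient space $V$ must be recognized as the effect-algebra meet that appears in the definition of a strong state operator; this is precisely where Theorem \ref{th:repr} is used (the effect-algebra order is the restriction of $\leq_K$), together with the observation that the $V$-infimum automatically lands back inside $[\theta,u]$.
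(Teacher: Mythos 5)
Your proof is correct and follows essentially the same route as the paper: define $p$ on the cone $K$ by rescaling into $[\theta,u]$, applying $\tau$, and rescaling back, then extend to $V=K-K$ by differences, with Lemma \ref{le:affine} guaranteeing well-definedness and additivity. Your final paragraph is in fact slightly more careful than the paper's, since you explicitly verify that the infimum computed in $V$ lands in $[\theta,u]$ and coincides with the effect-algebra meet before invoking strongness.
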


\begin{proof} We can (and do) identify $E$ with the interval $[\theta, u]$. Let $a\in V^+$, then since $u$ is an order unit, there is $n\in {\mathbb N}$ such that $a\leq nu$, hence $\frac{1}{n}a\leq u$. Put $p(a):=n\tau(\frac{1}{n}a)$.
Let $a,b\in V^+$, we find $n$ such that $\frac{a+b}{n}\leq u$, and so $p(a+b)=n\tau(\frac{a+b}{n})=n(\tau(\frac{a}{n})\oplus \tau(\frac{b}{n}))=p(a)+p(b)$. Now for every $x\in V$, $x=x_1-x_2$, $x_1,x_2\in V^+$, and we may put $p(x)=p(x_1)-p(x_2)$. If also $x=y_1-y_2$, $y_1,y_2\in V^+$, then
from $x_1+y_2=y_1+x_2$, we get $p(x_1)+p(x_2)=p(y_1)+p(y_2)$, so that $p(x)$ is well defined. For $a\in V^+$, $p(p(a))=p(n\tau(\frac{a}{n}))=np(\tau(\frac{a}{n}))=n\tau(\tau(\frac{a}{n}))=p(a)$. From this we derive that $p^2=p$.

To finish the proof, assume that $\tau$ is strong. Let $p(x)\wedge p(y)$  exist for $x,y\in V^+$. Then there is $n\in \mathbb N$ such that $\theta \leq x,y\leq n 1$, hence $\theta\leq x/n, y/n\leq 1$. Therefore $p(p(x)\wedge p(y))=p(n(\tau(x/n)\wedge \tau(y/n)))=n\tau(\tau(x/n)\wedge \tau(y/n))=n(\tau(x/n)\wedge \tau(y/n))=p(x)\wedge p(y)$.

\end{proof}

\section{State operators and conditional expectations  on JC-effect algebras}

Let us consider the following examples.

\begin{example}\label{ex:2}  {\rm Let $ (\Omega, \Sigma, \mu )$ be a  (finite) measure space. Let ${\mathcal E}$ be the set of all $\Sigma$-measurable functions $f:\Omega \to [0,1]$. ${\mathcal E}$ can be organized into an MV-algebra by putting $f\boxplus g=\min \{f+g,1\}$ and $f'=1-f$, where $1(\omega)=1$ for all $\omega \in \Omega$. Moreover, we also have ${\mathcal E}= \{ f: 0\leq f\leq 1\}\subseteq L^2(\Omega,\Sigma,\mu)$. It has been shown that a linear  transformation $T:L^2 \to L^2$ is a conditional expectation if and only if  (0) $T$ is self-adjoint and idempotent (i.e, $T$ is a projection); (1) $T(1)=1$; and  (2) if $f\in L^2, g\in L^2$ then  $T(\max \{Tf,Tg\})=\max \{Tf,Tg\}$ \cite{Sid}. It can be easily seen that if we restrict $T$ to ${\mathcal E}$, we obtain a strong state operator on ${\mathcal E}$.}

\end{example}

\begin{example}\label{ex:1}{\rm
With the standard quantum-mechanical Hilbert space formalism, $\mu_B(x):=\mu(\sum_{i=1}^{\infty}p_ixp_i)$ , $x\in {\mathcal B}(H)$ is the state after a L\"uders  -- von Neumann measurement of an observable $B=\sum_{i=1}^{\infty}b_ip_i$  with discrete spectrum $(b_i)_{i=1}^{\infty}$ and with the eigen-projections $(p_i)_{i=1}^{\infty}, \sum_{i=1}^{\infty}p_i=I$, when the physical system was in the initial state $\mu$ prior to the measurement. While von Neumann \cite{vN} originally considered only the case when the $p_i$ are one-dimensional projections (i.e. atoms in the projection lattice ${\mathcal P}(H)$ of the Hilbert space $H$), that is, only an observable $B$ with a non-degenerate spectrum,  L\"uders \cite{Lu} later extended von Neumann's measurement process to the case when $p_i$  need  not longer be  one-dimensional, and the observable $B$ may be degenerated.

The operator $p:=\sum_{i=1}^{\infty}p_ixp_i$ is the \emph{von Neumann-L\"uders conditional expectation} on ${\mathcal B}(H)$ with respect to $B$. Let $\tau(a):=\sum_{i\in \mathbb N}p_iap_i$, $a\in {\mathcal E}(H)$, be the restriction of $p$ to ${\mathcal E}(H)$.

It is easy to check that this  $\tau$ satisfies axioms (i), (ii), (iii) of Definition \ref{de:stateop}. Moreover, if $a$ commutes with all $p_i$, then $\tau(a)=a$, and conversely, $\tau(a)=a$ implies $p_ja=p_j\sum_{i=1}^{\infty}p_iap_i=p_jap_j$, whence the range of $\tau$ consists of all effects that commute with all of the projections $p_i$.

Moreover, the state operator induced by the von Neumann-L\"uders conditional expectation is faithful, hence it is strong. Indeed, let $\sum_ip_iap_i=0$, then $p_iap_i=0$, hence $p_ia=0$ for all $i$, and hence $\sum_ip_ia=a=0$.

Since ${\mathcal E}(H)$ is far from being a lattice \cite{Gud, LM}, but it is closed under the triple products of the form $aba$, $a,b\in {\mathcal E}(H)$, it may be reasonable to consider, instead of property (\ref{eq: x}), the  following property:

\begin{equation}\label{eq: xx}
\forall a\in {\mathcal E}(H), \tau(\tau(a)b\tau(a))=\tau(a)\tau(b)\tau(a).
\end{equation}

It is easy to check that the state operator induced by the von Neumann-L\"uders conditional expectation satisfies (\ref{eq: xx}).}
\end{example}

Motivated by the latter examples, we will study state operators on so-called JC-effect algebras, which generalize the effect algebra ${\mathcal E}(H)$ and their relations with conditional expectations.

Recall that a JC-algebra is a norm-closed real vector subspace of bounded self-adjoint operators on a Hilbert space $\mathcal H$, closed under the Jordan product $a\circ b=\tfrac 12(ab+ba)$, \cite{ESjord}. A JC-algebra is called a JW-algebra if it is closed in the weak topology. It was shown that the lattice of projections in a JW-algebra $\jca$ must be complete, hence $\jca$ contains a unit $1$, \cite{Top}. In what follows, we will  suppose that a JC-algebra contains the unit as well. In particular, if $\jca=\mathcal A^{sa}$ is the set of all self-adjoint operators in a C*-subalgebra $\mathcal A\subset B(\mathcal H)$, we will suppose that $\mathcal A$ is unital.

Let $\jca$ be a JC-algebra and let $\jca^+=\jca\cap B(\mathcal H)^+$ be the cone of positive operators in
$\jca$. Then $\jca^+=\{ a\circ a: a\in \jca\}$. The set of effects in $\jca$ is called a \emph{JC-effect algebra}, it will be denoted by
$\mathcal E(\jca)=\jca\cap \mathcal E(\mathcal H)$. A sub-effect algebra $\mathcal E\subset \mathcal E(\jca)$ is called  a sub-JC-effect algebra if $a^2\in\mathcal E$ for all $a\in \mathcal E$.

Recall that the triple product on a Jordan algebra is defined by
\[
\{xyz\}=(x\circ y)\circ z+(y\circ z)\circ x-(z\circ x)\circ y.
\]
In a  JC-algebra $\jca$, we have
\[
\{abc\}=\frac12(abc+cba),\qquad a,b,c\in\jca,
\]
in particular,
\[
\{aba\}=2(a\circ b)\circ a -(a^2)\circ b=aba.
\]
It is clear that if $a,b\in \mathcal E(\jca)$, then $aba\in \mathcal E(\jca)$ and if $\mathcal E\subset \mathcal E(\mathcal H)$ is a sub-effect algebra, then $\mathcal E$ is a sub-JC-effect algebra if and only if $aba\in \mathcal E$ whenever $a,b\in \mathcal E$.

Let $\tau :{\mathcal E}({\jca})\to {\mathcal E}({\jca})$ be a state operator. Since $\mathcal E(\jca)$
 is the $[0,I]$ interval in the ordered vector space $(\jca,\jca^+)$, $\tau$ extends to a linear, positive idempotent and unital mapping $\tilde \tau: \jca\to \jca$, by Theorem \ref{th:extend}.
Such maps were studied in \cite{ES} and it was shown that the image $\tilde \tau(\jca)$
is (has an isometric Jordan representation as) a JC-algebra, with the product $\tilde\tau(a)\star\tilde\tau(b)=\tilde\tau(\tilde\tau(a)\circ\tilde\tau(b))$.  Moreover,  if $\tilde \tau$ is faithful, $\tilde \tau(\jca)$ is a Jordan subalgebra in $\jca$. All of the results in this section are easy consequences of the results of \cite{ES}, nevertheless, we include some proofs for the convenience of the reader.

A crucial result used in this paragraph is the Kadison-Schwarz inequality \cite{K}, extended to the context of JC-algebras \cite{ES},
which states that
 if $p:\jca\to B(\mathcal H)$ is a linear, positive and contractive map, then $p(a^2)\ge p(a)^2$ for all $a\in \jca$.
In particular,  a state operator $\tau$ on $\mathcal E(\jca)$ satisfies
\begin{equation}\label{eq:ineq}
 \tau(a)^2\le \tau( \tau(a)^2)\le \tau(a^2),\qquad a\in \mathcal E(\jca).
\end{equation}
We will show below that unital additive  maps satisfying equality in either of these  inequalities are special cases
 of state operators. Moreover, we will show that each state operator is a composition of such maps.

\begin{lemma}\label{le:equiv}
Let $p:{\jca}\to {\jca}$ be a positive, linear mapping such that  $p(1)=1$ and let $a\in \jca$.
The following are equivalent:

{\rm(a)} $p(a^2)=p(a)^2$;

{\rm(b)} $p(b\circ a)=p(b)\circ p(a)$ for all $b\in \jca$;

{\rm(c)} $p(aba)=p(a)p(b)p(a)$ for all $b\in \jca$.

\end{lemma}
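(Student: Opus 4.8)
The plan is to prove the equivalence by the cyclic chain $(a)\Rightarrow(b)\Rightarrow(c)\Rightarrow(a)$, using the Kadison–Schwarz inequality $p(x^2)\ge p(x)^2$ (valid since $p$ is positive, linear and unital, hence contractive on a JC-algebra) as the main engine, together with the standard polarization identity $b\circ a=\tfrac12((a+b)^2-a^2-b^2)$ in the Jordan algebra.

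\textbf{Step 1: $(a)\Rightarrow(b)$.} Assume $p(a^2)=p(a)^2$. The idea is a ``squeeze'' argument. For any $b\in\jca$ and any real $t$, apply Kadison–Schwarz to $a+tb$: $p((a+tb)^2)\ge p(a+tb)^2$, which expands to $p(a^2)+2t\,p(a\circ b)+t^2p(b^2)\ge p(a)^2+2t\,p(a)\circ p(b)+t^2p(b)^2$. Using $p(a^2)=p(a)^2$ this reduces to $2t\,(p(a\circ b)-p(a)\circ p(b))+t^2(p(b^2)-p(b)^2)\ge 0$ for all $t\in\mathbb R$. Since the coefficient of $t$ can be made to dominate near $t=0$ if $p(a\circ b)-p(a)\circ p(b)\ne 0$ (this element is self-adjoint, so it is positive or negative ``somewhere''; more precisely, take a state $\phi$ on $\jca$ and consider the scalar inequality $2t\,\phi(p(a\circ b)-p(a)\circ p(b))+t^2\phi(p(b^2)-p(b)^2)\ge 0$, which forces $\phi(p(a\circ b)-p(a)\circ p(b))=0$ for every state $\phi$), we conclude $p(a\circ b)=p(a)\circ p(b)$.

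\textbf{Step 2: $(b)\Rightarrow(c)$.} Assume $p(b\circ a)=p(b)\circ p(a)$ for all $b$. First, taking $b=a$ gives $p(a^2)=p(a)^2$, i.e.\ (a). Now recall the Jordan identity $aba=\{aba\}=2(a\circ b)\circ a-a^2\circ b$. Apply $p$: $p(aba)=2\,p((a\circ b)\circ a)-p(a^2\circ b)$. Using (b) twice — once with $a\circ b$ in place of $b$, once with $a^2$ in place of $b$ — and then using $p(a^2)=p(a)^2$ and $p(a\circ b)=p(a)\circ p(b)$, this becomes $2(p(a)\circ p(b))\circ p(a)-p(a)^2\circ p(b)=\{p(a)p(b)p(a)\}=p(a)p(b)p(a)$, which is (c). The only subtlety is that (b) is a hypothesis about all $b\in\jca$, so applying it to the elements $a\circ b$ and $a^2$ is legitimate.

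\textbf{Step 3: $(c)\Rightarrow(a)$.} Assume $p(aba)=p(a)p(b)p(a)$ for all $b$. Take $b=1$: then $aba=a\cdot 1\cdot a=a^2$ and $p(a)p(1)p(a)=p(a)^2$ since $p(1)=1$, so $p(a^2)=p(a)^2$, which is (a). This closes the cycle.

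The main obstacle is Step 1, and specifically making rigorous the ``the linear term must vanish'' reasoning in an operator (rather than scalar) setting: one must either pass to scalar-valued states as indicated, or argue directly that a self-adjoint-operator-valued quadratic polynomial in $t$ that is positive for all real $t$ must have vanishing linear coefficient — which is again cleanest via states. I would also need to double-check that $p$ being positive, linear and unital on a JC-algebra indeed implies it is contractive, so that the Kadison–Schwarz inequality from \cite{ES} applies with $p$ in the role of the map into $B(\mathcal H)$; this is standard but should be cited. Steps 2 and 3 are purely formal Jordan-algebra manipulations and should present no difficulty.
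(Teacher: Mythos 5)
Your overall architecture --- the cycle $(a)\Rightarrow(b)\Rightarrow(c)\Rightarrow(a)$, with the Kadison--Schwarz inequality applied to the pencil $a+tb$ in Step 1 and the identity $aba=2(a\circ b)\circ a-a^2\circ b$ in Step 2 --- is exactly the paper's. Step 1 is correct: your extraction of the vanishing linear coefficient via states is a legitimate variant of the paper's device of dividing the operator inequality by $t$ and letting $t\to\pm\infty$ (a self-adjoint element annihilated by every state is zero, since $\jca$ is an order unit space), and contractivity of a positive unital map does follow from $-\|a\|1\le a\le\|a\|1$. Step 3 is trivially correct.

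The gap is in Step 2, in the treatment of the term $p(a^2\circ b)$. You justify $p(a^2\circ b)=p(a^2)\circ p(b)$ by ``using (b) with $a^2$ in place of $b$'', but substituting $b:=a^2$ into the hypothesis $p(b\circ a)=p(b)\circ p(a)$ only yields $p(a^3)=p(a^2)\circ p(a)$, which says nothing about $p(a^2\circ b)$ for a general $b$. What you actually need is statement (b) with $a^2$ playing the role of $a$, and that is not among your hypotheses: (b) is assumed only for the fixed element $a$. The repair is the one the paper makes: from (b) deduce by induction that $p(a^n)=p(a)^n$ (using $a^n=a^{n-1}\circ a$), in particular $p\bigl((a^2)^2\bigr)=p(a)^4=p(a^2)^2$, so that $a^2$ itself satisfies condition (a); then invoke the already-established implication $(a)\Rightarrow(b)$ for the element $a^2$ to get $p(a^2\circ b)=p(a^2)\circ p(b)=p(a)^2\circ p(b)$. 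With that insertion your Step 2 computation closes correctly; the first term $p((a\circ b)\circ a)=(p(a)\circ p(b))\circ p(a)$ is handled properly as written.
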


\begin{proof} The first part of the proof is completely analogical to a proof of a similar statement for
 maps on C*-algebras satisfying Schwarz inequality, \cite{ham}.

(a) $\implies$ (b): Let $p(a^2)=p(a)^2$, $b\in \mathcal A^{sa}$, $t\in \mathbb R$. Then by the Kadison-Schwarz inequality,
\begin{align}
2tp(a)\circ p(b)=& p(ta+b)^2-t^2p(a)^2-p(b)^2\notag\\
\le& p((ta+b)^2)-t^2p(a^2)-p(b)^2\label{eq:lem}\\
=& 2tp(a\circ b)+p(b^2)-p(b)^2\notag
\end{align}
Dividing the inequality by $t$ and letting $t\to \pm \infty$, we obtain (b).

(b) $\implies$ (c): Note that since $a^n=a\circ a^{n-1}$, we obtain by induction from (b) that $p(a^n)=p(a)^n$, $n\in \mathbb N$. In particular, $p(a^2)^2=p(a)^4=p((a^2)^2)$, so that both $a$ and $a^2$ satisfy the condition (a).
 Further, we have
\[
p((a\circ b)\circ a)=p(a\circ b)\circ p(a)=(p(a)\circ p(b))\circ p(a)
\]
and by the part (a) $\implies$ (b) of the proof,
\[
p(a^2\circ b)=p(a^2)\circ p(b)=p(a)^2\circ p(b).
\]
Hence
\[
p(aba)=p(2(a\circ b)\circ a -a^2\circ b)=2(p(a)\circ p(b))\circ p(a)-p(a)^2\circ p(b)=p(a)p(b)p(a).
\]

(c) $\implies$ (a) is obvious, by putting $b=1$.

 \end{proof}

\begin{definition}\label{de:condexp} A mapping $\tau:{\mathcal E}(\jca)\to {\mathcal E}(\jca)$ will be called a
\emph {conditional expectation} on ${\mathcal E}(\jca)$ iff the following conditions are satisfied:
\begin{enumerate}
\item[{\rm(i)}] $\tau(1)=1$;
\item[{\rm(ii)}] if $a\perp b$, then $\tau(a+b)=\tau(a)+\tau(b)$;
\item[{\rm(iii)}] for all $a,b \in {\mathcal E}(\jca)$, $\tau(\tau(a)b\tau(a))=\tau(a)\tau(b)\tau(a)$.
\end{enumerate}
\end{definition}

Notice that putting $b=1$ in (iii) we get
\begin{enumerate}
\item[(iii')] $\tau(\tau(a)^2)=\tau(a)^2$,
\end{enumerate}
that is, equality holds in the first inequality of (\ref{eq:ineq}). We show in Corollary \ref{co:equiv} that we may
 replace (iii) with (iii') in Definition \ref{de:condexp}.

We will next study the relation between state operators and conditional expectations on $\mathcal E(\jca)$.

\begin{theorem}\label{le:condsa} Let $\tau: \mathcal E(\jca)\to \mathcal E(\jca)$ be a conditional expectation, then $\tau$ is a state
operator on $\mathcal E(\jca)$. Its extension $\tilde \tau$ satisfies
\[
\tilde\tau(\{\tilde \tau(a)b\tilde\tau(c)\})= \{\tilde\tau(a)\tilde\tau(b)\tilde\tau(c)\},\qquad a,b,c\in \jca
\]
In particular, the image $\tilde{\tau}(\jca)$  is a Jordan subalgebra of $\jca$.
\end{theorem}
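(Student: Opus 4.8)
The plan is to verify the three axioms of Definition~\ref{de:stateop} for $\tau$ and then to lift the multiplicative identity (iii) of Definition~\ref{de:condexp} to the linear extension $\tilde\tau$ via the universal property of the extension in Theorem~\ref{th:extend}. Axioms (i) and (ii) of a state operator are immediate from conditions (i) and (ii) of a conditional expectation, so the only non-trivial point is idempotence, $\tau(\tau(a))=\tau(a)$ for all $a\in\mathcal E(\jca)$. For this I would put $b=1$ in the conditional-expectation identity (iii) to obtain (iii'), namely $\tau(\tau(a)^2)=\tau(a)^2$; since $\tau$ is positive, unital and (being additive on orthogonal pairs and extending by Theorem~\ref{th:extend}) contractive, Lemma~\ref{le:equiv} applies to $p=\tilde\tau$ and the element $\tilde\tau(a)\in\jca$. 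Equality in (a) of that lemma for the element $\tilde\tau(a)$ gives, via (b), that $\tilde\tau(\,\tilde\tau(a)\circ b\,)=\tilde\tau(\tilde\tau(a))\circ\tilde\tau(b)$ for all $b$; taking $b=1$ yields $\tilde\tau(\tilde\tau(a))=\tilde\tau(\tilde\tau(a)\circ 1)=\tilde\tau(\tilde\tau(a))\circ 1=\tilde\tau(\tilde\tau(a))$, which is not yet idempotence, so instead I would argue directly: (iii) with general $b$ already says $\tau$ restricted to its range behaves multiplicatively, and in particular applying (iii') and the fact that equality in the Kadison--Schwarz inequality for $\tilde\tau$ at the point $\tilde\tau(a)$ forces (by Lemma~\ref{le:equiv}(b) with $b=\tilde\tau(a)$, using $\tilde\tau(a)^2=\tilde\tau(a)\circ\tilde\tau(a)$) the identity $\tilde\tau(\tilde\tau(a)^2)=\tilde\tau(\tilde\tau(a))^2$; combined with (iii') this gives $\tilde\tau(\tilde\tau(a))^2=\tilde\tau(a)^2$, and then positivity of square roots in $\jca^+$ together with $\tilde\tau(\tilde\tau(a))\le\tilde\tau(a)$ or the analogous ordering from \eqref{eq:ineq} pins down $\tilde\tau(\tilde\tau(a))=\tilde\tau(a)$ on effects. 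This is exactly the place I expect to lean on the results of \cite{ES}.

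Once idempotence is established, $\tau$ is a state operator and $\tilde\tau:\jca\to\jca$ is its linear, positive, unital, idempotent extension from Theorem~\ref{th:extend}. To prove the displayed identity I would first extend (iii) from $\mathcal E(\jca)$ to all of $\jca$: every $a\in\jca$ is a real linear combination of at most two effects (scale into $[0,I]$ after adding a multiple of $1$), so bilinearity of $(a,b)\mapsto aba$ in $a$ and linearity in $b$, together with linearity of $\tilde\tau$, promote condition (iii) to $\tilde\tau(\tilde\tau(a)b\tilde\tau(a))=\tilde\tau(a)\tilde\tau(b)\tilde\tau(a)$ for all $a,b\in\jca$ — i.e.\ $\tilde\tau(\{\tilde\tau(a)b\tilde\tau(a)\})=\{\tilde\tau(a)\tilde\tau(b)\tilde\tau(a)\}$ using the Jordan-algebra identity $\{xyx\}=xyx$ recorded in the excerpt. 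Then I would polarize in the outer variable: replace $a$ by $a+c$ and by $a-c$ in this identity and subtract, which (after dividing by $4$ and using that the triple product is symmetric in its outer entries, $\{xyz\}=\{zyx\}$) yields $\tilde\tau(\{\tilde\tau(a)b\tilde\tau(c)\})=\{\tilde\tau(a)\tilde\tau(b)\tilde\tau(c)\}$ for all $a,b,c\in\jca$, which is the assertion.

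Finally, that $\tilde\tau(\jca)$ is a Jordan subalgebra: set $c=a$ and specialize the displayed identity, or more directly use Lemma~\ref{le:equiv}(a)$\iff$(b) applied pointwise — for $x,y$ in the range, write $x=\tilde\tau(x)$, $y=\tilde\tau(y)$, and observe from the polarized identity with $b=1$ that $\tilde\tau(\tilde\tau(a)\circ\tilde\tau(c))=\{\tilde\tau(a)\,1\,\tilde\tau(c)\}$-type expressions land back in the range and, crucially, that $x\circ y$ itself (not merely $\tilde\tau(x\circ y)$) lies in $\tilde\tau(\jca)$. Concretely: $x\circ y=\tilde\tau(x)\circ\tilde\tau(y)$, and setting $b=1$, $a=x$, $c=y$ in the displayed identity gives $\tilde\tau(\tilde\tau(x)\circ\tilde\tau(y))=\tilde\tau(x)\circ\tilde\tau(y)=x\circ y$, so $x\circ y=\tilde\tau(x\circ y)\in\tilde\tau(\jca)$; since $\tilde\tau(\jca)$ is visibly a norm-closed real linear subspace of self-adjoint operators, it is a JC-algebra, hence a Jordan subalgebra of $\jca$.

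\textbf{Main obstacle.} The one genuinely delicate point is the proof of idempotence from (iii'): turning the scalar equality $\tilde\tau(\tilde\tau(a)^2)=\tilde\tau(a)^2$ into the operator equality $\tilde\tau(\tilde\tau(a))=\tilde\tau(a)$ requires the converse direction of the Kadison--Schwarz analysis — that equality in the Schwarz inequality at a point forces the map to be a Jordan homomorphism along the subalgebra generated by that point — which is the substantive input imported from \cite{ES} and the C*-algebra precedent \cite{ham}; everything after that (the polarization and the subalgebra conclusion) is routine multilinear bookkeeping.
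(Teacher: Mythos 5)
Your overall architecture (reduce everything to idempotence, get (iii') by setting $b=1$, then run Lemma~\ref{le:equiv} and polarize the triple product) is the paper's route, and the second and third paragraphs — extending (iii) to $\jca$ by scaling and polarization, and deducing that the range is closed under $\circ$ by putting $b=1$ in the three-variable identity — are correct and essentially what the paper does. The problem is the idempotence step, which is the one genuinely nontrivial point and where your argument is circular. You invoke ``equality in the Kadison--Schwarz inequality for $\tilde\tau$ at the point $\tilde\tau(a)$,'' i.e.\ condition (a) of Lemma~\ref{le:equiv} for the element $\tilde\tau(a)$, which reads $\tilde\tau\bigl(\tilde\tau(a)^2\bigr)=\tilde\tau\bigl(\tilde\tau(a)\bigr)^2$. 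But what (iii') actually gives is $\tilde\tau\bigl(\tilde\tau(a)^2\bigr)=\tilde\tau(a)^2$; these two statements coincide only if $\tilde\tau(\tilde\tau(a))=\tilde\tau(a)$, which is exactly the idempotence you are trying to prove. Without that, Kadison--Schwarz applied to the element $\tilde\tau(a)$ combined with (iii') yields only the one-sided inequality $\tilde\tau(\tilde\tau(a))^2\le\tilde\tau\bigl(\tilde\tau(a)^2\bigr)=\tilde\tau(a)^2$, hence (by operator monotonicity of the square root) $\tau(\tau(a))\le\tau(a)$; your appeal to ``positivity of square roots'' and ``the analogous ordering from (\ref{eq:ineq})'' does not supply the reverse inequality, so the step does not close.

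The gap is easy to repair, and the paper's fix is purely algebraic and avoids square roots entirely: first check that (iii') extends by positive homogeneity to $\tilde\tau(\tilde\tau(x)^2)=\tilde\tau(x)^2$ for all $x\in\jca^+$ (equation (\ref{eq:square})), then apply this to $x=1+a$ with $a\in\mathcal E(\jca)$ and expand: $1+2\tau(\tau(a))+\tau(\tau(a)^2)=\tilde\tau(\tilde\tau(1+a)^2)=\tilde\tau(1+a)^2=1+2\tau(a)+\tau(a)^2$, and after cancelling via (iii') this gives $\tau(\tau(a))=\tau(a)$ outright. (Equivalently, within your own scheme: apply your one-sided inequality $\tau(\tau(e))\le\tau(e)$ to both $e=a$ and $e=a^{\perp}=1-a$ and use additivity and unitality to obtain the two opposite inequalities.) Once idempotence is in hand, (\ref{eq:square}) really does become condition (a) of Lemma~\ref{le:equiv} for each element $\tilde\tau(a)$, and the rest of your argument — Lemma~\ref{le:equiv}(c), polarization in the outer variable, and the fixed-point argument for the range — goes through as you wrote it. Your closing remark also misidentifies the obstacle: no ``converse Kadison--Schwarz'' input from \cite{ES} is needed at this point, only the elementary $1+a$ trick.
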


\begin{proof}  To show that $\tau$ is a state operator, it is enough to check that $\tau^2=\tau$. By property (iii) of
conditional expectations we obtain $\tau(\tau(a)^2)=\tau(a)^2$ for all $a\in {\mathcal E}(\jca)$.
Since  $\tilde\tau(a)=\|a\|\tau(\frac a{\|a\|})$ for positive elements in $\jca$ and $\|a\|1- a\ge 0$ for all $a\in \jca$,
 it is easily checked that we have
\begin{equation}\label{eq:square}
\tilde\tau(\tilde\tau(a)^2)=
\tilde\tau(a)^2,\qquad a\in \jca
\end{equation}
 Applying this equality  to $1+a$ for $a\in {\mathcal E}(\jca)$, we obtain
\[
2\tau(a)=\tilde\tau(1+a)^2-\tau(a)^2-1=\tilde\tau(\tilde\tau(1+a)^2)-\tau(\tau(a)^2)-1=2\tau(\tau(a)),
\]
hence $\tau$ is idempotent.

It follows that $\tilde \tau(a)$ satisfies the property (a) of Lemma \ref{le:equiv} for all
  $a\in \jca$. The rest of the proof now follows by Lemma \ref{le:equiv} (c) by linearity of the triple product.

\end{proof}
\begin{corollary}\label{co:equiv}  Let $\tau:{\mathcal E}(\jca)\to {\mathcal E}(\jca)$ be a unital additive mapping.
Then $\tau$ is a conditional expectation
 if and only if its range $\tau(\mathcal E(\jca))$ is a sub-JC-effect algebra in $\mathcal E(\jca)$.
\end{corollary}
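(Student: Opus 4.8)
The plan is to prove Corollary \ref{co:equiv} by establishing the two implications separately, leaning on Theorem \ref{le:condsa} and Lemma \ref{le:equiv}.

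\textbf{Forward direction.} Suppose $\tau$ is a conditional expectation. By Theorem \ref{le:condsa}, $\tau$ is a state operator, so by Lemma \ref{le:propoftau}(v) its range $\tau(\mathcal E(\jca))$ is already a sub-effect algebra of $\mathcal E(\jca)$. It remains to check closure under $a\mapsto a^2$. Given $a\in\mathcal E(\jca)$, property (iii) of Definition \ref{de:condexp} (or equivalently (iii')) gives $\tau(\tau(a)^2)=\tau(a)^2$, and since $\tau(a)^2\in\mathcal E(\jca)$ this exhibits $\tau(a)^2$ as a value of $\tau$, hence $\tau(a)^2\in\tau(\mathcal E(\jca))$. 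Using the remark after Definition \ref{de:condexp} that a sub-effect algebra $\mathcal E\subset\mathcal E(\mathcal H)$ is a sub-JC-effect algebra iff $aba\in\mathcal E$ whenever $a,b\in\mathcal E$, I would actually prefer to verify that stronger closure directly: for $\tau(a),\tau(b)$ in the range, property (iii) gives $\tau(a)\tau(b)\tau(a)=\tau(\tau(a)b\tau(a))\in\tau(\mathcal E(\jca))$. Either way, $\tau(\mathcal E(\jca))$ is a sub-JC-effect algebra.

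\textbf{Converse direction.} Suppose $\tau$ is unital and additive with range $R:=\tau(\mathcal E(\jca))$ a sub-JC-effect algebra. First I would note that $\tau$ is a state operator: additivity on orthogonal pairs together with $\tau(1)=1$ already gives (i) and (ii) of Definition \ref{de:stateop}, and for (iii), idempotence, I argue that for $a\in\mathcal E(\jca)$ we have $\tau(a)\in R$; writing $\tau(a)=\tau(c)$ for some $c$, I want $\tau(\tau(a))=\tau(a)$. Here is where closure of $R$ under squaring enters: since $R$ is a sub-JC-effect algebra, $\tau(a)^2\in R$, and one can run the polarization argument from the proof of Theorem \ref{le:condsa} — pass to the extension $\tilde\tau:\jca\to\jca$ (which exists and is positive, linear, unital by Theorem \ref{th:extend}), observe that the Kadison--Schwarz inequality (\ref{eq:ineq}) forces $\tilde\tau(\tilde\tau(a)^2)=\tilde\tau(a)^2$ once $\tilde\tau(a)^2$ lies in the range and $\tilde\tau$ acts as the identity there, then apply this to $1+a$ exactly as in Theorem \ref{le:condsa} to conclude $\tau(\tau(a))=\tau(a)$. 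Thus $\tau$ is a state operator and $\tilde\tau$ is a positive unital linear idempotent with $\tilde\tau|_R=\mathrm{id}$. Now for any $a\in\mathcal E(\jca)$, $\tilde\tau(a)\in R$ and $\tilde\tau(a)^2\in R$ (as $R$ is closed under squares in the Jordan sense), so $\tilde\tau(\tilde\tau(a)^2)=\tilde\tau(a)^2$; that is, $\tilde\tau(a)$ satisfies condition (a) of Lemma \ref{le:equiv} with $p=\tilde\tau$. By the equivalence (a)$\iff$(c) of Lemma \ref{le:equiv}, $\tilde\tau(\tilde\tau(a)b\tilde\tau(a))=\tilde\tau(a)\tilde\tau(b)\tilde\tau(a)$ for all $b\in\jca$, and restricting to $a,b\in\mathcal E(\jca)$ gives precisely property (iii) of Definition \ref{de:condexp}. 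Hence $\tau$ is a conditional expectation.

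\textbf{Main obstacle.} The delicate point is the converse: deducing idempotence of $\tau$ (and hence that $\tau$ is a genuine state operator) purely from the hypothesis that its range is a sub-JC-effect algebra, since a priori we are only given additivity and unitality, not $\tau^2=\tau$. The resolution is to import the extension $\tilde\tau$ via Theorem \ref{th:extend} — but note Theorem \ref{th:extend} is stated for \emph{state operators}, so some care is needed: in fact the construction of $\tilde\tau$ in Theorem \ref{th:extend} only uses additivity and positivity, not idempotence, so it applies to our $\tau$; alternatively one invokes that a positive additive unital self-map of $\mathcal E(\jca)$ always extends to a positive unital linear $\tilde\tau:\jca\to\jca$. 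Granting that, the $1+a$ polarization trick of Theorem \ref{le:condsa} does the rest, and everything else is a direct citation of Lemma \ref{le:equiv}.
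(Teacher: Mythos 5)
Your forward direction is correct and coincides with what the paper dismisses as ``obvious'': from (iii) of Definition \ref{de:condexp} with $b=1$ one gets $\tau(a)^2=\tau(\tau(a)^2)\in\tau(\mathcal E(\jca))$, and more generally $\tau(a)\tau(b)\tau(a)=\tau(\tau(a)b\tau(a))$ lies in the range, so the range is a sub-JC-effect algebra.

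The converse direction contains a genuine circularity, and you have in fact put your finger on it without resolving it. To run the polarization argument of Theorem \ref{le:condsa} you first need (iii'), i.e.\ $\tilde\tau(\tilde\tau(a)^2)=\tilde\tau(a)^2$; your justification is that $\tilde\tau(a)^2$ lies in the range and ``$\tilde\tau$ acts as the identity there''. But ``$\tilde\tau$ acts as the identity on its range'' is precisely the idempotence you are trying to establish: knowing only that $\tilde\tau(a)^2=\tilde\tau(c)$ for some $c$ gives $\tilde\tau(\tilde\tau(a)^2)=\tilde\tau(\tilde\tau(c))$, which cannot be identified with $\tilde\tau(c)$ without $\tilde\tau^2=\tilde\tau$. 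Kadison--Schwarz does not rescue this: applied to $\tilde\tau(a)$ it yields $\tilde\tau(\tilde\tau(a)^2)\ge\bigl(\tilde\tau(\tilde\tau(a))\bigr)^2$, and indeed the first inequality of (\ref{eq:ineq}) already presupposes idempotence. The gap is not merely presentational. Take $\jca=\mathbb R^2$ (the self-adjoint part of the abelian C*-algebra $\mathbb C^2$) and let $\tau(x,y)=(y,x)$ on $\mathcal E(\jca)=[0,1]^2$, or take the transpose map on $M_2(\mathbb C)^{sa}$: these are unital and additive, their range is all of $\mathcal E(\jca)$ and hence certainly a sub-JC-effect algebra, yet they are neither idempotent nor conditional expectations. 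So the ``if'' direction cannot be derived from unitality and additivity alone; one needs idempotence (i.e.\ that $\tau$ is a state operator) as an extra hypothesis, after which closure of the range under squares does give (iii') and Lemma \ref{le:equiv} finishes the argument exactly as you describe. For what it is worth, the paper's own proof hides this same step behind the sentence ``it is clear that the range is a sub-JC-effect algebra iff (iii') holds'', so the difficulty lies in the statement itself rather than in your reading of it; but an honest proof must either add the idempotence hypothesis or supply an argument for it, and no such argument exists.
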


\begin{proof} It is clear that the range is a sub-JC-effect algebra iff (iii') holds. Suppose it is true,
 then by the proof of Theorem \ref{le:condsa}, $\tau$ is idempotent, so that the condition
(iii) of Definition \ref{de:condexp} follows by Lemma \ref{le:equiv}. The converse statement is obvious.

\end{proof}

\begin{theorem}\label{th:faithful} Let $\tau: \mathcal E(\jca)\to \mathcal E(\jca)$ be a faithful state operator. Then $\tau$ is a conditional
expectation on $\mathcal E(\jca)$.
\end{theorem}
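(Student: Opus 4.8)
The plan is to verify property (iii$'$) of Definition \ref{de:condexp} for $\tau$ and then invoke Corollary \ref{co:equiv}, which states that a unital additive map on $\mathcal E(\jca)$ satisfying (iii$'$) (equivalently, whose range is a sub-JC-effect algebra) is a conditional expectation. A faithful state operator is in particular a state operator, hence unital and additive, so Corollary \ref{co:equiv} will apply as soon as (iii$'$) is established.

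To obtain (iii$'$), fix $a\in\mathcal E(\jca)$. Since $0\le\tau(a)\le 1$ we have $\tau(a)^2\le\tau(a)\le 1$, so $\tau(a)^2\in\mathcal E(\jca)$ and therefore $\tau(\tau(a)^2)\in\mathcal E(\jca)$. By the first inequality in \eqref{eq:ineq}, $\tau(a)^2\le\tau(\tau(a)^2)$, so the difference $d:=\tau(\tau(a)^2)\ominus\tau(a)^2$ exists and, being between $0$ and $\tau(\tau(a)^2)$, is again an element of $\mathcal E(\jca)$. Applying $\tau$ to the relation $\tau(a)^2\oplus d=\tau(\tau(a)^2)$ and using Lemma \ref{le:propoftau}(iii) together with idempotency of $\tau$,
\[
\tau(d)=\tau(\tau(\tau(a)^2))\ominus\tau(\tau(a)^2)=\tau(\tau(a)^2)\ominus\tau(\tau(a)^2)=0 .
\]
Since $\tau$ is faithful, this forces $d=0$, i.e. $\tau(\tau(a)^2)=\tau(a)^2$. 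As $a\in\mathcal E(\jca)$ was arbitrary, this is exactly property (iii$'$), and Corollary \ref{co:equiv} completes the proof.

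If one prefers to exhibit condition (iii) of Definition \ref{de:condexp} explicitly: once (iii$'$) is known, the extension argument in the proof of Theorem \ref{le:condsa} upgrades it to $\tilde\tau(\tilde\tau(a)^2)=\tilde\tau(a)^2$ for all $a\in\jca$ (formula \eqref{eq:square}), so every element of the range of $\tilde\tau$ satisfies condition (a) of Lemma \ref{le:equiv}. For $a,b\in\mathcal E(\jca)$, applying Lemma \ref{le:equiv}(c) to $\tilde\tau(a)$ gives $\tilde\tau(\tilde\tau(a)b\tilde\tau(a))=\tilde\tau(a)\tilde\tau(b)\tilde\tau(a)$; since $\tilde\tau$ agrees with $\tau$ on $\mathcal E(\jca)$ and $\tau(a)b\tau(a)\in\mathcal E(\jca)$, this reads $\tau(\tau(a)b\tau(a))=\tau(a)\tau(b)\tau(a)$, which is (iii), while (i) and (ii) are immediate from $\tau$ being a state operator.

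The argument is short, and the only point needing a little care is the observation that $d$ is genuinely an effect --- this is what permits applying $\tau$ to it and using Lemma \ref{le:propoftau}(iii) --- together with the bookkeeping giving $\tau(d)=0$ via idempotency. There is no substantial obstacle: the whole content rests on the Kadison--Schwarz inequality \eqref{eq:ineq}, faithfulness, and Corollary \ref{co:equiv}.
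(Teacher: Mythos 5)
Your proof is correct and follows essentially the same route as the paper: by the Kadison--Schwarz inequality the element $d=\tau(\tau(a)^2)\ominus\tau(a)^2$ is an effect killed by $\tau$ (via idempotency), faithfulness forces $d=0$, and then Lemma \ref{le:equiv}/Corollary \ref{co:equiv} yields the triple-product identity. The only difference is presentational: you spell out the bookkeeping showing $\tau(d)=0$, which the paper leaves implicit.
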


\begin{proof}
Let $a\in \mathcal E(\jca)$, then by the Kadison-Schwaz inequality, $e:=\tau(\tau(a)^2)-\tau(a)^2$ is an element in $\mathcal E(\jca)$ such that
$\tau(e)=0$. Since $\tau$ is faithful, this implies that $\tau(\tau(a)^2)=\tau(a)^2$. The rest now follows by Lemma \ref{le:equiv}.

\end{proof}

We now consider equality in the second inequality of (\ref{eq:ineq}).

\begin{lemma}\label{le:second}
Let $p:\jca\to\jca$ be a linear positive unital idempotent map. Let $a\in \jca$. Then the following are
 equivalent.
 \begin{enumerate}
\item[(a)] $p(a^2)=p(p(a)^2))$;
\item[(b)] $p(a\circ b)=p(p(a)\circ p(b))$, for all $b\in \jca$;
\item[(c)] $p(aba)=p(p(a)p(b)p(a))$, for all $b\in \jca$.

 \end{enumerate}

\end{lemma}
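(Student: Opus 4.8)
The plan is to imitate the proof of Lemma \ref{le:equiv}, replacing the Kadison--Schwarz inequality by the elementary estimate
\[
p(p(x)^2)\le p(x^2),\qquad x\in\jca ,
\]
which holds for every positive linear unital idempotent $p$: applying the positive map $p$ to $p(x^2)\ge p(x)^2$ and using $p\circ p=p$ gives $p(x^2)=p(p(x^2))\ge p(p(x)^2)$. The implications (b)$\implies$(a) and (c)$\implies$(a) are immediate: put $b=a$, resp.\ $b=1$, and use $a\circ a=a^2=a\cdot 1\cdot a$.

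For (a)$\implies$(b), fix $b\in\jca$ and, for $t\in\mathbb R$, expand both sides of the estimate applied to $x=ta+b$. The coefficients of $t^2$ are $p(p(a)^2)$ and $p(a^2)$, which coincide by (a), so after cancellation one is left with
\[
2t\bigl(p(p(a)\circ p(b))-p(a\circ b)\bigr)\le p(b^2)-p(p(b)^2),
\]
the right-hand side being independent of $t$; letting $t\to\pm\infty$ forces $p(a\circ b)=p(p(a)\circ p(b))$, i.e.\ (b). Combined with (b)$\implies$(a), this shows that (a) and (b) are equivalent for \emph{every} element of $\jca$; in particular every element of the range $\mathcal B:=p(\jca)$ satisfies (b), since it trivially satisfies (a).

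It remains to prove (b)$\implies$(c). Recall from \cite{ES} that $\mathcal B=p(\jca)$, equipped with the product $x\star y:=p(x\circ y)$, is a (commutative, power-associative) JC-algebra with unit $1$. By the computations in the proof of Lemma \ref{le:equiv} (with products of range elements on the right replaced by $\star$, and Kadison--Schwarz replaced by the estimate above — the only ingredients used besides linearity of $p$ are that estimate and the power-associativity of $\star$), (b) yields first $p(a^n)=p(a)^{\star n}$, hence that $a^2$ again satisfies (a), and then
\[
p(aba)=\{p(a)\,p(b)\,p(a)\}_\star ,
\]
where $\{xyx\}_\star:=2\,x\star(x\star y)-(x\star x)\star y$ is the triple product of $(\mathcal B,\star)$. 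To finish, I would identify this triple product with $p$ of the ordinary operator triple product: for $u,v\in\mathcal B$, property (b) for $u$ gives $p(u\circ(u\circ v))=p(u\circ p(u\circ v))=u\star(u\star v)$, and property (b) for $v$ gives $p(u^2\circ v)=p(v\circ p(u^2))=(u\star u)\star v$, whence
\[
\{uvu\}_\star=p\bigl(2(u\circ v)\circ u-u^2\circ v\bigr)=p(uvu).
\]
Taking $u=p(a)$, $v=p(b)$ then gives $p(aba)=p(p(a)p(b)p(a))$, which is (c).

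The expansions and the limit argument merely transcribe those of Lemma \ref{le:equiv}. The point that requires genuine care is the final identification $\{uvu\}_\star=p(uvu)$: it holds precisely because elements of the range $\mathcal B$ enjoy the multiplicativity property (b), and one must bear in mind that $\mathcal B$ is closed under $\star$ but not under the original Jordan product $\circ$, so that $u^2=u\circ u$ need not lie in $\mathcal B$ — which is exactly why property (b) must be invoked for $v$, and not for $u^2$, when treating the term $p(u^2\circ v)$.
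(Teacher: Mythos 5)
Your proof is correct and follows exactly the route the paper intends: the paper's proof of Lemma~\ref{le:second} consists of the single remark that it is ``very similar to the proof of Lemma~\ref{le:equiv}'', and your argument is precisely that adaptation, with the Kadison--Schwarz inequality replaced by the composed estimate $p(p(x)^2)\le p(x^2)$. You also correctly isolate and resolve the one point the paper glosses over --- that in the step (b)$\implies$(c) the range is closed under $\star$ but not under $\circ$, so the triple product must be identified via the Effros--St{\o}rmer Jordan structure $\{uvu\}_\star=p(uvu)$ for range elements.
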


\begin{proof} The proof is very similar to the proof of Lemma \ref{le:equiv}.

\end{proof}

Note that if $a=p(a)$ is in the range of $p$ in the above lemma, then $a$ trivially satisfies condition (a), hence we have
\begin{align}
p(p(a)\circ b)&=p(p(a)\circ p(b))\label{eq:ES}\\
p(p(a)bp(a))&= p(p(a)p(b)p(a))
\end{align}
for all $a,b\in \jca$.

\begin{theorem}\label{thm:jordan} Let $\tau:\mathcal E(\jca)\to \mathcal E(\jca)$ be a unital additive map such that $\tau(a^2)=\tau(\tau(a)^2)$ for
 all $a\in \mathcal E(\jca)$. Then $\tau$ is a state operator. Moreover,
 \[
\mathcal I_\tau:=\{a\in \jca,\ \tilde \tau(a^2)=0\}
 \]
is a Jordan  ideal in $\jca$ and the map $[a]_{\mathcal I_\tau}\mapsto \tilde \tau(a)$ is an isometric
Jordan isomorphism of $\jca|_{\mathcal I_\tau}$ onto the
range $\tilde \tau(\jca)$ with Jordan structure given by $a\star b=\tilde \tau(a\circ b)$.
\end{theorem}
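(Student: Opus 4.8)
The statement has three parts that I would establish in sequence. First, I would show that $\tau$ is a state operator, i.e., that $\tau^2=\tau$ (additivity and unitality being assumed). This is the same computation as in the proof of Theorem \ref{le:condsa}: extend $\tau$ to $\tilde\tau:\jca\to\jca$ via $\tilde\tau(a)=\|a\|\tau(a/\|a\|)$ on positive elements and linearity, observe that the hypothesis $\tau(a^2)=\tau(\tau(a)^2)$ yields $\tilde\tau(a^2)=\tilde\tau(\tilde\tau(a)^2)$ for all $a\in\jca$, then apply this to $1+a$ for $a\in\mathcal E(\jca)$ and expand the square to get $2\tau(a)=2\tau(\tau(a))$. (Alternatively one may invoke \cite{ES} directly.)

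\emph{The heart of the matter is the second part:} that $\mathcal I_\tau=\{a\in\jca: \tilde\tau(a^2)=0\}$ is a Jordan ideal. Here I would use the characterization in Lemma \ref{le:second}, which after the idempotence just proved applies with $p=\tilde\tau$: every $a\in\jca$ satisfies condition (a), hence conditions (b) and (c), and in particular equations \eqref{eq:ES} hold for all $a,b\in\jca$ (using that range elements satisfy (a) trivially). The first thing to check is that $\mathcal I_\tau$ is the kernel of the positive semidefinite ``sesquilinear form'' $(a,b)\mapsto \tilde\tau(a\circ b)$ — this is where the Kadison--Schwarz inequality \eqref{eq:ineq} enters: $\tilde\tau(a\circ b)^2\le \tilde\tau(a^2)\tilde\tau(b^2)$-type estimates (or rather, the induced seminorm $a\mapsto\tilde\tau(a^2)^{1/2}$ is genuinely a seminorm and its kernel is a linear subspace). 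Then $a\in\mathcal I_\tau$ iff $\tilde\tau(a\circ b)=0$ for all $b$, iff $\tilde\tau(\{aba\})=0$ for all $b$ (by Lemma \ref{le:second}(c), since $\tilde\tau(a^2)=0$ forces $\tilde\tau(\tilde\tau(a)p\tilde\tau(a))=0$ after some care — one wants $\tilde\tau(a)^2$ controlled, which follows from $\tilde\tau(a)^2\le\tilde\tau(\tilde\tau(a)^2)=\tilde\tau(a^2)=0$, so in fact $\tilde\tau(a)=0$ for $a\in\mathcal I_\tau$). Once one knows $a\in\mathcal I_\tau\Rightarrow\tilde\tau(a)=0$, closure of $\mathcal I_\tau$ under Jordan multiplication by arbitrary $b\in\jca$ follows: for $a\in\mathcal I_\tau$ and $b\in\jca$, using \eqref{eq:ES} and $\tilde\tau(a)=0$ one gets $\tilde\tau((a\circ b)^2)$ expressed through $\tilde\tau$ of triple products involving $\tilde\tau(a)=0$, giving $0$. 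The linear-subspace property is the seminorm remark; the ideal property is this computation. I expect the bookkeeping with the triple products $\{(a\circ b)(a\circ b)\}$ versus $\{aba\}$, $\{ab^2a\}$ to be the fussiest point.

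\emph{For the third part}, the quotient $\jca|_{\mathcal I_\tau}$ carries the norm $\|[a]\|=\inf_{c\in\mathcal I_\tau}\|a+c\|$, and I would define $\Phi([a]_{\mathcal I_\tau}):=\tilde\tau(a)$; this is well defined precisely because $a\in\mathcal I_\tau\Rightarrow\tilde\tau(a)=0$, and it is a linear bijection onto $\tilde\tau(\jca)$ since $\tilde\tau$ is idempotent (so its range is exactly its set of fixed points and $\ker\tilde\tau\supseteq\mathcal I_\tau$; conversely if $\tilde\tau(a)=0$ then $\tilde\tau(a^2)=\tilde\tau(\tilde\tau(a)^2)=0$, so $\ker\tilde\tau=\mathcal I_\tau$). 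That $\Phi$ intertwines $a\star b=\tilde\tau(a\circ b)$ with the Jordan product on $\tilde\tau(\jca)$ is immediate from the definition of $\star$. Isometry is the one genuinely analytic claim: $\|\tilde\tau(a)\|\le\|[a]\|$ holds because $\tilde\tau$ is unital positive hence contractive and kills $\mathcal I_\tau$, so $\|\tilde\tau(a)\|=\|\tilde\tau(a+c)\|\le\|a+c\|$ for all $c\in\mathcal I_\tau$; the reverse inequality $\|[a]\|\le\|\tilde\tau(a)\|$ holds for $a$ in the range of $\tilde\tau$ since then $[a]=[\tilde\tau(a)]$ and $\|[\tilde\tau(a)]\|\le\|\tilde\tau(a)\|$ trivially, and every class has such a representative. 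Finally, that $\star$ makes $\tilde\tau(\jca)$ a JC-algebra (Jordan-isomorphic to a concrete one) is the theorem of \cite{ES} already cited in the text, so I would simply invoke it rather than reprove the Jordan-algebra axioms. The main obstacle, as indicated, is verifying the ideal property cleanly; everything else is formal once idempotence and the implication $a\in\mathcal I_\tau\Rightarrow\tilde\tau(a)=0$ are in hand.
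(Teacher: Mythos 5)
Your proposal is correct and follows essentially the same route as the paper: idempotence via the $1+a$ trick, the identification $\mathcal I_\tau=\ker\tilde\tau$ through the Kadison--Schwarz inequality, Lemma \ref{le:second} for the multiplicative properties, and the identical isometry argument. The only cosmetic difference is that the paper verifies the ideal property via the quadratic criterion $bab\in\mathcal I_\tau$ (citing Effros--St{\o}rmer), whereas you check closure under $a\circ b$ directly; the "fussy" triple-product bookkeeping you anticipate actually evaporates, since once $\mathcal I_\tau=\ker\tilde\tau$ is known, Lemma \ref{le:second}(b) gives $\tilde\tau(a\circ b)=\tilde\tau(\tilde\tau(a)\circ\tilde\tau(b))=0$ in one line.
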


\begin{proof} It is clear that $\tilde \tau(a^2)=\tilde\tau(\tilde\tau(a)^2)$ extends to  all $a\in \jca^+$, in particular to $a+I$, $a\in \mathcal E(\jca)$. It follows that $\tau$ is idempotent, hence a state operator. Using Lemma \ref{le:second}, it is easy to see that $\tilde \tau(a^2)=\tilde\tau(\tilde\tau(a)^2)$ holds for all $a\in\jca$.

We next show that $\mathcal I_\tau=\{ c\in \jca, \tilde\tau(c)=0\}$.
Indeed, let $c\in \mathcal I_\tau$, then by Kadison inequality, $\tilde \tau(c)^2\le \tilde\tau(c^2)=0$.
Conversely, if $\tilde \tau(c)=0$, then $\tilde\tau(c^2)=\tilde\tau(\tilde\tau(c)^2)=0$. Hence $\mathcal I_\tau$
is a Jordan ideal  if and only if $bab\in \mathcal I_\tau$  for all
$a\in \mathcal I_\tau$ and $b\in \jca$, \cite{ESjord}. By Lemma \ref{le:second},
\[
\tilde\tau(bab)=\tilde\tau(\tilde\tau(b)\tilde\tau(a)\tilde\tau(b))=0
\]

By  \cite{ESjord}, $\jca|_{\mathcal I_\tau}$ is a JC-algebra.
It is clear that
$\phi: [a]\mapsto \tilde \tau(a)$ is a well defined linear  unital map $\jca|_{\mathcal I_\tau}$ onto
$\tilde \tau(\jca)$. Moreover,
\[
\phi([a]^2)=\phi([a^2])=\tilde\tau(a^2)=\tilde\tau(\tilde \tau(a)^2), \qquad [a]\in \jca|_{\mathcal I_\tau}
\]
this implies that $\phi$ is a Jordan homomorphism with respect to the product $a\star b$
 on $\tilde\tau(\jca)$.
To show that $\phi$ is an isometry, note that $a-\tilde\tau(a)\in \mathcal I_\tau$ for all $a$ and hence
\[
\|[a]\|=\inf \{\|a+c\|, c\in \mathcal I_\tau\}\le \|\tilde\tau(a)\|=\|\tilde\tau(a+c)\|\le \|a+c\|,\quad c\in \mathcal I_\tau.
\]

\end{proof}

\begin{definition} A map on $\mathcal E(\jca)$ satisfying the conditions of Theorem \ref{thm:jordan} will be called
a Jordan state operator on $\mathcal E(\jca)$.
\end{definition}

We now turn to the case of a JW-algebra.

\begin{definition}\label{de:normal} We say that an additive map $\tau:{\mathcal E}({\jca})\to {\mathcal E}({\jca})$
is \emph{normal} iff for every ascending net $(a_{\alpha})_{\alpha}$,
$a_{\alpha}\nearrow a$ implies $\tau(a_{\alpha}) \nearrow \tau(a)$.
\end{definition}

Notice that $\tau$ is normal iff for any summable family  $(a_i)_{i\in I}$ of effects, we have $\tau(\oplus_{i\in I}a_i)=\oplus_{i\in I}\tau(a_i)$.
Recall that a state $\rho$ on an effect algebra is \emph{completely additive} iff $\rho(\oplus_{i\in I}a_i)=\sum_{i\in I}\rho(a_i)$ whenever the orthosum $\oplus_{i\in I}a_i$ exists.

Let ${\jca}$ be a JW-algebra, then ${\mathcal E}({\jca})$ is an orthocomplete effect algebra, and completely additive
states on ${\mathcal E}({\jca})$ coincide with the restrictions of normal states on ${\jca}$ to ${\mathcal E}({\jca})$.
Hence
 a  map $\tau$  on $\mathcal E(\jca)$ is normal if and only if $\phi\circ \tau$ is normal for all normal states $\phi$.

\begin{corollary}
Let $\jca$ be a JW-algebra and let $\tau:\mathcal E(\jca)\to\mathcal E(\jca)$ be a unital normal map. Then
 $\tau$ is a conditional expectation if and only if its range is a sub-JW-effect algebra in $\mathcal E(\jca)$.

\end{corollary}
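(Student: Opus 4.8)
The plan is to bootstrap from Corollary \ref{co:equiv}, which already characterises conditional expectations among unital additive maps on $\mathcal E(\jca)$ as exactly those whose range is a sub-JC-effect algebra. The only genuinely new content here is the upgrade from \emph{sub-JC} to \emph{sub-JW}, and that is precisely where the normality hypothesis is forced to enter.

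The backward implication is immediate: if $\tau(\mathcal E(\jca))$ is a sub-JW-effect algebra, it is in particular a sub-JC-effect algebra, so Corollary \ref{co:equiv} gives that $\tau$ is a conditional expectation, with normality playing no role.

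For the forward implication, suppose $\tau$ is a conditional expectation. First I would invoke Theorem \ref{le:condsa} to get that $\tau$ is a state operator, hence idempotent, so that its range is the fixed-point set $\{a\in\mathcal E(\jca):\tau(a)=a\}$, which by Corollary \ref{co:equiv} is a sub-JC-effect algebra. Next I would feed in normality: since $\jca$ is a JW-algebra, $\mathcal E(\jca)$ is orthocomplete, so for an orthogonal family $(a_i)_{i\in I}$ in the range the orthosum $\oplus_{i\in I}a_i$ exists in $\mathcal E(\jca)$, and normality yields $\tau(\oplus_{i\in I}a_i)=\oplus_{i\in I}\tau(a_i)=\oplus_{i\in I}a_i$; hence the range is an \emph{orthocomplete} sub-JC-effect algebra. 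To identify this with the unit interval of a JW-subalgebra, I would pass to the linear extension $\tilde\tau:\jca\to\jca$ of Theorem \ref{th:extend}; by Theorem \ref{le:condsa} its image $\mathcal N:=\tilde\tau(\jca)$ is a Jordan subalgebra of $\jca$, and normality of $\tau$ makes $\tilde\tau$ an order-continuous (normal) map, hence $\sigma$-weakly continuous on bounded sets. Then $\mathcal N=\{a\in\jca:\tilde\tau(a)=a\}$ has $\sigma$-weakly closed unit ball, so $\mathcal N$ is weakly closed by the Krein--Smulian theorem, i.e.\ a JW-subalgebra of $\jca$; and $\tau(\mathcal E(\jca))=\mathcal E(\jca)\cap\mathcal N$ is precisely $\mathcal E(\mathcal N)$, a sub-JW-effect algebra.

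The step I expect to be the main obstacle — really the only one with any content — is this last identification: arguing carefully that an orthocomplete (normal) range forces the associated Jordan subalgebra to be weakly closed. How much is needed depends on the precise meaning of \emph{sub-JW-effect algebra} adopted in the paper: if it is simply taken to mean an orthocomplete sub-JC-effect algebra, the normality-of-$\tau$ step already finishes the proof; if it is defined as the unit interval of a weakly closed Jordan subalgebra, one must supply the $\sigma$-weak continuity of $\tilde\tau$ on bounded sets together with Krein--Smulian, or equivalently the monotone-completeness characterisation of JW-algebras from the cited references, e.g.\ \cite{Top}.
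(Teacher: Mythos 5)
Your proof is correct and follows the route the paper intends: the corollary is stated there without proof, as an immediate consequence of Corollary \ref{co:equiv} together with the preceding remarks on normality, and your argument supplies exactly the missing details (the backward direction needing only Corollary \ref{co:equiv} since a sub-JW-effect algebra is in particular a sub-JC-effect algebra, the forward direction using normality of $\tilde\tau$ to force weak closure of its range). The only caveat is the one you already flag yourself, namely that the paper never formally defines \emph{sub-JW-effect algebra}, but under either reading your argument closes the gap.
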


The following definition is analogous to the definition of a support of a normal state \cite[Definition 7.2.4]{KaRi}.

\begin{definition}\label{de:support}  For a normal state operator $\tau$ on ${\mathcal E}({\jca})$,  the \emph{support} of $\tau$ is the complement $e$ of the maximal projection $f\in {\mathcal E}({\jca})$ with the property $\tau(f)=0$.
\end{definition}

By Lemma 1.2 in \cite{ES}, we have that
  $\tau(a)=\tau(eae)$ for every $a\in {\mathcal E}({\jca})$ and $\tau(a)=0$ implies $eae=0$, moreover, $e\tau(a)=\tau(a)e$
 for every $a\in \mathcal E(\jca)$.

\begin{corollary}\label{co:support} Let $\tau$ be a normal state operator on ${\mathcal E}({\jca})$, and let $e$ be the support of $\tau$. Then $\tau_e$,
defined by $\tau_e(a)=\tau(a)e$, is a normal  faithful conditional expectation on $e{\mathcal E}({\jca})e ={\mathcal E}({\jca})[0,e]$.
\end{corollary}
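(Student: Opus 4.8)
The plan is to transport the structure of $\tau$ to the corner $e\mathcal E(\jca)e$ using the facts quoted from \cite{ES} right before the statement: namely $\tau(a)=\tau(eae)$, $\tau(a)=0 \implies eae=0$, and $e\tau(a)=\tau(a)e$ for all $a\in\mathcal E(\jca)$. Note first that $e\mathcal E(\jca)e = \mathcal E(\jca)[0,e]$ is itself a JC-effect algebra, the effect algebra of the JC-algebra $e\jca e$ with unit $e$; this is routine since $e$ is a projection in $\jca$ and $e\jca e$ is norm-closed and closed under the Jordan product. So it makes sense to speak of conditional expectations on it in the sense of Definition \ref{de:condexp}, reading $1$ as $e$.

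The main steps I would carry out, in order, are as follows. First, check that $\tau_e$ maps $e\mathcal E(\jca)e$ into itself: for $a\in\mathcal E(\jca)$ we have $\tau_e(a)=\tau(a)e$, and since $e\tau(a)=\tau(a)e$ this equals $e\tau(a)e$, which lies in $e\mathcal E(\jca)e$; moreover $\tau_e$ restricted to $e\mathcal E(\jca)e$ is clearly additive on orthogonal pairs and normal, being a composition of $\tau$ (normal) with multiplication by the fixed projection $e$. Second, verify unitality on the corner: $\tau_e(e)=\tau(e)e$; but $\tau(e)=\tau(eee)=\tau(e)$ gives nothing directly, so instead use $1=e\oplus e^\perp$ and $\tau(e^\perp)$: since $e^\perp=f$ is the maximal projection with $\tau(f)=0$, we get $\tau(e)=\tau(1)\ominus\tau(e^\perp)=1-0=1$, hence $\tau_e(e)=1\cdot e=e$. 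Third, establish idempotence of $\tau_e$ on the corner: for $a\in e\mathcal E(\jca)e$, $\tau_e(\tau_e(a))=\tau(\tau(a)e)e=\tau(e\tau(a)e)e$; using $\tau(a)=\tau(eae)$ and $e\tau(a)=\tau(a)e$ one reduces $\tau(e\tau(a)e)$ to $\tau(\tau(a))=\tau(a)$, so $\tau_e(\tau_e(a))=\tau(a)e=\tau_e(a)$. Thus $\tau_e$ is a normal state operator on $e\mathcal E(\jca)e$.

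Fourth, prove faithfulness of $\tau_e$ on the corner: if $a\in e\mathcal E(\jca)e$ and $\tau_e(a)=0$, then $\tau(a)e=0$; since $\tau(a)=e\tau(a)e$ (as $e$ commutes with $\tau(a)$ and $\tau(a)=\tau(eae)=e\tau(a)e$ by the same commutation argument, or directly from $\tau(a)=\tau(eae)$ and Lemma 1.2 of \cite{ES}), multiplying $\tau(a)e=0$ on the left by $e$ gives $\tau(a)=0$; then by the quoted implication $eae=0$, i.e. $a=0$ since $a\in e\mathcal E(\jca)e$. Fifth and finally, conclude by Theorem \ref{th:faithful}: a faithful state operator on a JC-effect algebra is automatically a conditional expectation. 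Applying this with $\jca$ replaced by the JW-algebra $e\jca e$ (still a JW-algebra, being a corner of a JW-algebra by a fixed projection), $\tau_e$ is a normal faithful conditional expectation on $e\mathcal E(\jca)e$, which is the assertion.

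The step I expect to be the main obstacle is the bookkeeping in steps three and four: carefully combining $\tau(a)=\tau(eae)$ with the commutation $e\tau(a)=\tau(a)e$ to show both that $\tau_e$ genuinely lands in and acts idempotently on the corner, and that $\tau(a)e=0$ forces $\tau(a)=0$. These are exactly the places where one must invoke Lemma 1.2 of \cite{ES} in the right form; once the identity $\tau(a)=e\tau(a)e$ on the corner is in hand, everything else is a short computation. One should also take a moment to confirm that $e\jca e$ is a JW-algebra with unit $e$ so that Theorem \ref{th:faithful} applies verbatim, but this is standard.
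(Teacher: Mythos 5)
Your overall route is the intended one: the paper gives no separate argument for this corollary, relying exactly on the three facts quoted from Lemma 1.2 of \cite{ES} together with Theorem \ref{th:faithful}, and your steps (closure into the corner, unitality via $\tau(e)=\tau(1)\ominus\tau(e^\perp)=1$, idempotence via $\tau(e\tau(a)e)=\tau(\tau(a))=\tau(a)$, faithfulness, then Theorem \ref{th:faithful}) reproduce it. However, the justification you give for faithfulness contains a false identity. You assert $\tau(a)=e\tau(a)e$, arguing ``$\tau(a)=\tau(eae)=e\tau(a)e$''; the first equality is one of the quoted facts, but the second is not --- it would say that $\tau$ commutes with compression by $e$, and it fails already for $a=e$, since $\tau(e)=1$ while $e\tau(e)e=e$. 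More generally $\tau(a)=e\tau(a)e$ cannot hold for all $a$ in the corner unless $e=1$, because $\tau$ is unital on $\mathcal E(\jca)$.

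The step is nonetheless true and the repair is one line, using the quoted facts the way you already use them in your idempotence step: if $\tau_e(a)=\tau(a)e=0$, then $e\tau(a)e=\tau(a)e^2=0$ by the commutation $e\tau(a)=\tau(a)e$, hence
\[
\tau(a)=\tau(\tau(a))=\tau\bigl(e\,\tau(a)\,e\bigr)=\tau(0)=0,
\]
where the middle equality is $\tau(x)=\tau(exe)$ applied to $x=\tau(a)$. Then $eae=0$ by the second quoted fact, and $a=eae=0$ since $a$ lies in the corner. With this substitution your argument is complete; the remaining points you flag (that $e\jca e$ is a JW-algebra with unit $e$, so that Theorem \ref{th:faithful} and the definition of conditional expectation apply verbatim on ${\mathcal E}(\jca)[0,e]$) are indeed routine.
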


Let us now turn to the case when the  state operator $\tau$ on a JC-effect algebra $\mathcal E(\jca)$ is not faithful. The next Theorem (together with Theorem \ref{thm:jordan}) is a reformulation of \cite[Corollary 1.5]{ES}).

\begin{theorem}\label{th:dec}
Let $\jca$ be a JW-algebra and let $\tau: \mathcal E(\jca)\to \mathcal E(\jca)$ be a normal state operator.
Then
\begin{enumerate}
\item[(i)] $\mathcal E_\tau:=\{a\in \mathcal E(\jca), \tau(a^2)=\tau(\tau(a)^2)\}$ is a sub-JW-effect algebra in $\mathcal E(\jca)$;
\item[(ii)] there is a faithful normal conditional expectation $\mu$ on $\mathcal E(\jca)$ with range $\mathcal E_\tau$;
\item[(iii)] there is a normal Jordan state operator
$\phi$ on $\mathcal E_\tau$ such that $\tau=\phi\circ \mu$.
\end{enumerate}

\end{theorem}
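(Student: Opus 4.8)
\textbf{Proof proposal for Theorem \ref{th:dec}.}

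The plan is to reduce everything to the structural results already available, chiefly Theorem \ref{thm:jordan}, Theorem \ref{th:faithful}, Corollary \ref{co:equiv}, and Lemma \ref{le:second}, together with the extension $\tilde\tau$ furnished by Theorem \ref{th:extend} and the support analysis from \cite{ES}. The key observation is that the set $\mathcal E_\tau$ is exactly the set of $a\in\mathcal E(\jca)$ on which equality holds in the second inequality of (\ref{eq:ineq}), and that by Lemma \ref{le:second} (applied to the linear positive unital idempotent extension $\tilde\tau$, which is idempotent since $\tau$ is a state operator) this is equivalent to $a$ satisfying condition (c) of that lemma, and in particular to $a$ lying in the range of $\tilde\tau$. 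So I first want to identify $\mathcal E_\tau$ with $\tilde\tau(\jca)\cap\mathcal E(\jca)$, the range of $\tau$.

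\textbf{(i)} First I would argue that $\tilde\tau(\jca)$ is a JC-subalgebra of $\jca$ with the \emph{genuine} Jordan product inherited from $\jca$ — not merely the twisted product $a\star b=\tilde\tau(a\circ b)$. The point is that by (\ref{eq:ES}) every element of the range already satisfies $p(p(a)\circ b)=p(p(a)\circ p(b))$, and when both $a=p(a)$ and $b=p(b)$ are in the range this forces $p(a\circ b)=p(a)\circ p(b)$; applying this with $a,b$ in the range shows the range is closed under $\circ$, so $\star$ restricted to the range agrees with $\circ$. Norm-closedness of the range follows from $\tilde\tau$ being a norm-one projection. Intersecting with $\mathcal E(\jca)$, we get that the range of $\tau$ is a sub-JC-effect algebra; when $\jca$ is a JW-algebra and $\tau$ is normal, $\tilde\tau$ is normal (weak$^*$-continuous), so the range is weak$^*$-closed, hence a sub-JW-effect algebra. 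It then remains to show $\mathcal E_\tau$ equals this range: the inclusion $\mathrm{ran}(\tau)\subseteq\mathcal E_\tau$ is the trivial direction of Lemma \ref{le:second} noted after its proof; conversely if $a\in\mathcal E_\tau$ then $\tilde\tau((a-\tilde\tau(a))^2)=\tilde\tau(a^2)-2\tilde\tau(a\circ\tilde\tau(a))+\tilde\tau(\tilde\tau(a)^2)$, and using (\ref{eq:ES}) to rewrite the middle term as $2\tilde\tau(\tilde\tau(a)^2)$ this collapses to $\tilde\tau(\tilde\tau(a)^2)-\tilde\tau(a^2)=0$, so by the Kadison--Schwarz inequality $\tilde\tau(a-\tilde\tau(a))^2\le\tilde\tau((a-\tilde\tau(a))^2)=0$, i.e. $a-\tilde\tau(a)$ has zero image; but then $a=\tilde\tau(a)\in\mathrm{ran}(\tau)$. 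Thus $\mathcal E_\tau=\mathrm{ran}(\tau)$ is a sub-JW-effect algebra.

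\textbf{(ii)--(iii)} Having identified $\mathcal E_\tau$ with the range of $\tau$, I would apply the decomposition from \cite[Corollary 1.5]{ES}: the extension $\tilde\tau$ decomposes as $\tilde\tau=\tilde\phi\circ\tilde\mu$, where $\tilde\mu:\jca\to\jca$ is the faithful normal positive unital idempotent map onto $\tilde\tau(\jca)$ obtained by using the support projection $e$ of $\tau$ (so $\tilde\mu(a)=$ the part of $\tilde\tau$-structure living under $e$, with $\tilde\mu$ faithful because the support has been factored out) and $\tilde\phi$ is the restriction of $\tilde\tau$ to $\tilde\tau(\jca)$; concretely one takes $\mu$ to be the normal state operator on $\mathcal E(\jca)$ whose extension is the conditional expectation onto $\tilde\tau(\jca)$ — faithfulness of $\mu$ then gives, via Theorem \ref{th:faithful}, that $\mu$ is a conditional expectation, which is (ii). For (iii), set $\phi:=\tau|_{\mathcal E_\tau}$ (equivalently $\tilde\phi$ restricted to effects in the range): since on the range the twisted product $\star$ coincides with $\circ$ as shown in (i), the identity $\tilde\tau(a^2)=\tilde\tau(\tilde\tau(a)^2)$ specialises on $\mathcal E_\tau$ to $\phi(a^2)=\phi(\phi(a)^2)$, which is exactly the defining property of a Jordan state operator (Theorem \ref{thm:jordan}); normality of $\phi$ is inherited from normality of $\tau$. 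Finally $\tau=\phi\circ\mu$ is checked on $\mathcal E(\jca)$ by extending to $\jca$ and invoking $\tilde\tau=\tilde\phi\circ\tilde\mu$ together with uniqueness of the extension in Theorem \ref{th:extend}.

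\textbf{Main obstacle.} The delicate point is (ii): producing the faithful normal conditional expectation $\mu$ onto $\mathcal E_\tau$ and verifying $\tau=\phi\circ\mu$. This is genuinely where the JW-hypothesis and the support machinery of Definition \ref{de:support} (and Lemma 1.2 of \cite{ES}) are needed — one must show that ``quotienting by the Jordan ideal $\mathcal I_\tau=\{a:\tilde\tau(a^2)=0\}$'' can be realised \emph{inside} $\jca$ as compression by the support projection $e$, so that $\mu$ lands back in $\mathcal E(\jca)$ rather than in some abstract quotient, and that the resulting $\mu$ is idempotent with range exactly $\mathcal E_\tau$. I would lean on \cite[Corollary 1.5]{ES} for this rather than reprove it, since the Theorem is explicitly billed as a reformulation of that result; the work on the effect-algebra side is then only to translate the statement about $\tilde\tau$ into the statement about $\tau$ restricted to effects, using Corollary \ref{co:equiv} to recognise conditional expectations from the shape of their ranges.
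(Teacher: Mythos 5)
Your proposal founders on the identification $\mathcal E_\tau=\mathrm{ran}(\tau)$, which is false in general, and the error propagates through all three parts. In the step where you deduce it, you compute (correctly, and just as the paper does) that $a\in\mathcal E_\tau$ implies $\tilde\tau\bigl((a-\tilde\tau(a))^2\bigr)=0$, and then conclude from Kadison--Schwarz that ``$a-\tilde\tau(a)$ has zero image; but then $a=\tilde\tau(a)$''. But $\tilde\tau(a-\tilde\tau(a))=\tilde\tau(a)-\tilde\tau(\tilde\tau(a))=0$ holds for \emph{every} $a$ by idempotence, and it forces $a=\tilde\tau(a)$ only if $\tilde\tau$ is faithful --- precisely the hypothesis this theorem is designed to dispense with. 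Concretely, take $\jca=M_2(\mathbb C)^{sa}\oplus M_2(\mathbb C)^{sa}$ and $\tau(a\oplus b)=a\oplus a$: then $\tau\bigl((a\oplus b)^2\bigr)=a^2\oplus a^2=\tau\bigl(\tau(a\oplus b)^2\bigr)$ for all $a,b$, so $\mathcal E_\tau$ is all of $\mathcal E(\jca)$, while $\mathrm{ran}(\tau)=\{a\oplus a\}$ is a proper sub-effect algebra. What the vanishing of $\tilde\tau\bigl((a-\tilde\tau(a))^2\bigr)$ actually yields, via Lemma 1.2 of \cite{ES}, is $(a-\tau(a))e=e(a-\tau(a))=0$ for the support projection $e$: the element $a$ agrees with $\tau(a)$ only \emph{under the support}, and is unconstrained in the $(1-e)$ corner. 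This is why the paper defines $\mu(a):=\tau(a)e+(1-e)a(1-e)$; this map is faithful by construction (hence a conditional expectation by Theorem \ref{th:faithful}), its fixed-point set is exactly $\mathcal E_\tau$ by the computation just described, and $\phi\circ\mu=\tau$ because $\tau$ annihilates the $(1-e)\,\cdot\,(1-e)$ corner. Your $\mu$, a projection onto $\mathrm{ran}(\tau)$, has the wrong range and would in general fail to be faithful on $\mathcal E(\jca)$.

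A second, related flaw is the claim in your part (i) that $\tilde\tau(\jca)$ is a Jordan subalgebra of $\jca$ for the \emph{inherited} product. The derivation is circular: substituting $a=p(a)$ and $b=p(b)$ into (\ref{eq:ES}) gives $p(a\circ b)=p(a\circ b)$, not $p(a\circ b)=a\circ b$. If the range of every state operator were a genuine sub-JC-effect algebra, then by Corollary \ref{co:equiv} every state operator would already be a conditional expectation and the decomposition of Theorem \ref{th:dec} (as well as Theorem \ref{thm:jordan}) would be vacuous; the point of \cite{ES} is precisely that the range is in general a Jordan algebra only for the deformed product $a\star b=\tilde\tau(a\circ b)$. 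Your part (iii) survives essentially intact once $\mathcal E_\tau$ is correctly identified as the range of $\mu$ rather than of $\tau$: the restriction $\phi=\tau|_{\mathcal E_\tau}$ satisfies $\phi(a^2)=\phi(\phi(a)^2)$ by the very definition of $\mathcal E_\tau$, with no appeal to any product structure on the range.
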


\begin{proof} Let $e$ be the support of $\tau$.
Let us define
\[
\mu(a):=\tau(a)e+(1-e)a(1-e)=\tau_e(eae)+(1-e)a(1-e), \qquad a\in \mathcal E(\jca).
\]
It is easy to see  that $\mu$ is a normal state operator  on $\mathcal E(\jca)$.
 Suppose that $\mu(a)=0$ for some $a\in \mathcal E(\jca)$, then we must have
$\tau(a)e=(1-e)a(1-e)=0$. It follows that  $\tau(a)=\tau(\tau(a))=\tau(\tau(a)e)=0$, so that $eae=ae=0$
 and $a=ae+a(1-e)=0$, hence $\mu$ is faithful. By Theorem \ref{th:faithful}, $\mu$ is a conditional expectation.

We will show that the range of $\mu$ is $\mathcal E_\tau$: let $a=\mu(a)$, then it is quite clear that $\tau(a^2)=\tau(\tau(a)^2)$.
Conversely, suppose $a\in \mathcal E_\tau$, then by using (\ref{eq:ES}), $\tilde \tau((a-\tau(a))^2)=0$, hence $(a-\tau(a))e=e(a-\tau(a))=0$. It follows that $ae=ea=\tau(a)e$ and $\mu(a)=ae+a(1-e)=a$.
This proves (i) and (ii).

Let $\phi$ be the restriction of $\tau$ to  $\mathcal E_\tau$, then $\phi$ is clearly a normal Jordan state operator and
 we have
\[
\phi\circ\mu(a)=\tau(\tau(a)e+(1-e)a(1-e))=\tau(a),\qquad a\in \mathcal E(\jca),
\]
so that (iii) is true.

\end{proof}

Let $\jca$ be a JC-algebra and let $\mathcal A$ be the C*-subalgebra in $B(\mathcal H)$ generated by $\jca$. Then we may identify
 the second dual $\jca^{**}$ with the strong operator closure of $\jca$ in the second dual $\mathcal A^{**}$ of $\mathcal A$,
 $\jca^{**}$  is thus a JW-algebra, \cite{ESjord,ES}. Moreover, if $p$ is a unital positive projection on $\jca$, then $p$
  extends to a normal unital positive projection $p^{**}:\jca^{**}\to \jca^{**}$.

  Let now $a$ be any element in the JW-effect algebra $\mathcal E(\jca^{**})$, then since the Kaplansky density theorem holds   for JC-algebras,  there is a net $\{a_\alpha\}$ of elements in the unit ball of $\jca$ converging to $a^{1/2}$ in the strong operator topology.
It follows that $\{a_\alpha^2\}$ is a net of element in $\mathcal E(\jca)$, converging to $a$, so that $\mathcal E(\jca^{**})$
 is the strong operator closure of $\mathcal E(\jca)$ in $\jca^{**}$.
It also folows that any state operator $\tau$ on $\mathcal E(\jca)$
   extends to a normal state operator $\tau^{**}$ on $\mathcal E(\jca^{**})$, given  by the restriction of $\tilde\tau^{**}$.

\begin{corollary} Let $\jca$ be a JC algebra and let $\tau$ be a state operator on $\mathcal E(\jca)$.
Then
 there is a faithful normal conditional expectation $\mu$ on $\mathcal E(\jca^{**})$ and a  Jordan state operator
$\phi$ on the range of $\mu$ such that $\tau=\phi\circ \mu|_{\mathcal E(\jca)}$.

\end{corollary}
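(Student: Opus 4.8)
The plan is to reduce the statement about a general JC-algebra $\jca$ to the JW-algebra case already handled in Theorem \ref{th:dec}, applied to the second dual $\jca^{**}$. The key observations, all recorded in the paragraph immediately preceding the corollary, are: (1) $\jca^{**}$ is a JW-algebra and $\mathcal E(\jca^{**})$ is the strong-operator closure of $\mathcal E(\jca)$; (2) a unital positive projection on $\jca$ extends to a normal unital positive projection $p^{**}$ on $\jca^{**}$; and (3) consequently any state operator $\tau$ on $\mathcal E(\jca)$ extends to a normal state operator $\tau^{**}$ on $\mathcal E(\jca^{**})$, obtained as the restriction to $\mathcal E(\jca^{**})$ of the bitranspose $\tilde\tau^{**}$ of the extension $\tilde\tau:\jca\to\jca$.

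First I would invoke Theorem \ref{th:extend} to pass from $\tau$ to its linear, positive, idempotent, unital extension $\tilde\tau:\jca\to\jca$, then form $\tilde\tau^{**}:\jca^{**}\to\jca^{**}$; this is again linear, positive, idempotent and unital (idempotence and unitality are preserved under bitransposition, and normality is automatic since a bitranspose is weak-* continuous, hence normal on the JW-algebra). Restricting to the JW-effect algebra $\mathcal E(\jca^{**})$ gives a normal state operator $\tau^{**}$ in the sense of Definition \ref{de:normal}, agreeing with $\tau$ on $\mathcal E(\jca)$ (using that $\tilde\tau^{**}$ agrees with $\tilde\tau$ on $\jca$, which it does because the canonical embedding $\jca\hookrightarrow\jca^{**}$ intertwines $\tilde\tau$ and $\tilde\tau^{**}$).

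Next I would apply Theorem \ref{th:dec} to the JW-algebra $\jca^{**}$ and the normal state operator $\tau^{**}$. This produces a faithful normal conditional expectation $\mu$ on $\mathcal E(\jca^{**})$ with range the sub-JW-effect algebra $\mathcal E_{\tau^{**}}=\{a\in\mathcal E(\jca^{**}):\tau^{**}(a^2)=\tau^{**}(\tau^{**}(a)^2)\}$, together with a normal Jordan state operator $\phi$ on $\mathcal E_{\tau^{**}}$ such that $\tau^{**}=\phi\circ\mu$. Restricting the identity $\tau^{**}=\phi\circ\mu$ to $\mathcal E(\jca)\subseteq\mathcal E(\jca^{**})$ and using $\tau^{**}|_{\mathcal E(\jca)}=\tau$ yields $\tau=\phi\circ\mu|_{\mathcal E(\jca)}$, which is exactly the asserted decomposition ($\phi$ being a Jordan state operator on the range of $\mu$ in the sense of the Definition following Theorem \ref{thm:jordan}).

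The one point requiring a little care — the main obstacle — is verifying cleanly that the bitranspose construction genuinely yields a normal \emph{state operator} extending $\tau$, i.e.\ that $\tilde\tau^{**}$ is idempotent and that its restriction to $\mathcal E(\jca^{**})$ takes values in $\mathcal E(\jca^{**})$ and is additive with respect to $\oplus$. Idempotence follows since $(\tilde\tau^{**})^2=(\tilde\tau^2)^{**}=\tilde\tau^{**}$ because bitransposition is a functorial (multiplicative on composable pairs) operation on bounded linear maps; positivity and unitality of $\tilde\tau^{**}$ give $\tilde\tau^{**}(\mathcal E(\jca^{**}))\subseteq\mathcal E(\jca^{**})$ as in Theorem \ref{th:extend}; additivity on $\oplus$ is just restriction of linearity; and normality is the weak-* continuity of a bitranspose. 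Once this bookkeeping is in place, the corollary is immediate from Theorem \ref{th:dec}. This argument is, as the text notes, essentially a restatement of \cite[Corollary 1.5]{ES} combined with the second-dual passage.
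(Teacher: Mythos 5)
Your argument is correct and is essentially the paper's own: the corollary is obtained by extending $\tau$ to the normal state operator $\tau^{**}$ on $\mathcal E(\jca^{**})$ via the bitranspose (as set up in the paragraph preceding the statement), applying Theorem \ref{th:dec} to the JW-algebra $\jca^{**}$, and restricting the resulting decomposition to $\mathcal E(\jca)$. The extra bookkeeping you supply on idempotence and normality of $\tilde\tau^{**}$ is accurate and only makes explicit what the paper leaves implicit.
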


\section{State operators and conditional expectations on effect algebras of Abelian C*-algebras}\label{sec:jca}

Let ${\mathcal A}$ be an abelian C*-algebra. Then ${\mathcal A}$ is isomorphic with $C(X)$, the set of all continuous complex valued functions on a compact Hausdorff space $X$.
Notice that the unit interval ${\mathcal E}({\mathcal A})$  is the MV-effect algebra consisting of all continuous functions $f:X\to [0,1]\subseteq {\mathbb R}$, this will be denoted by $C_1(X)$.
In fact, $C_1(X)$ is the unit interval in the real abelian C*-algebra $C(X;\mathbb R)$ of continuous real functions on $X$, which is a JC-algebra, with Jordan product being the usual product of functions. Hence $C_1(X)$ is also a JC-effect algebra. Hence we may compare the notion of a strong state operator and a conditional expectation.

\begin{theorem}\label{th:str_cond} Let $X$ be a compact Hausdorff space and let
$\tau$ be a state operator on $C_1(X)$. Then $\tau$ is a conditional expectation if and only if $\tau$ is strong.
\end{theorem}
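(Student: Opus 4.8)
The plan is to prove both implications by relating the two conditions through the pointwise multiplicative structure of $C_1(X)$. Recall that here the Jordan product is the ordinary product of functions, so being a conditional expectation means $\tau(\tau(f)^2)=\tau(f)^2$ for all $f\in C_1(X)$ (by Corollary~\ref{co:equiv}, this equality that the range be a sub-JC-effect algebra is the crux), whereas being strong means $\tau(\tau(f)\wedge\tau(g))=\tau(f)\wedge\tau(g)$ whenever the infimum exists — and in $C_1(X)$, which is a lattice (indeed an MV-effect algebra), the infimum always exists and is the pointwise minimum. So the first thing I would do is extend $\tau$ to $\tilde\tau:C(X;\mathbb R)\to C(X;\mathbb R)$ by Theorem~\ref{th:extend}, a linear, positive, unital, idempotent map; the range $\tilde\tau(C(X;\mathbb R))$ is a closed subspace containing $1$, and the question becomes whether this range is closed under products (conditional expectation) versus closed under pointwise max/min (strong).

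For the direction \emph{conditional expectation $\implies$ strong}: if $\tau$ is a conditional expectation, then by Theorem~\ref{le:condsa} the range $\tilde\tau(C(X;\mathbb R))$ is a Jordan subalgebra, i.e. a norm-closed subalgebra $B$ of $C(X;\mathbb R)$ containing the constants. A closed unital subalgebra of $C(X;\mathbb R)$ is automatically closed under $f\mapsto |f|$ (approximate $|t|$ uniformly on a bounded interval by polynomials in $t$, Weierstrass), hence closed under $\vee$ and $\wedge$ since $f\vee g=\tfrac12(f+g+|f-g|)$ and $f\wedge g=\tfrac12(f+g-|f-g|)$. Therefore for $f,g\in C_1(X)$ the element $\tau(f)\wedge\tau(g)$ lies in $B\cap C_1(X)=\tau(C_1(X))$ (using idempotence of $\tau$ to identify the range with the fixed-point set), and applying $\tau$ fixes it: $\tau(\tau(f)\wedge\tau(g))=\tau(f)\wedge\tau(g)$. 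That is exactly condition~(\ref{eq: x}), so $\tau$ is strong.

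For the converse \emph{strong $\implies$ conditional expectation}: assume $\tau$ is strong, so the set $R:=\tau(C_1(X))$ of fixed points of $\tau$ in $C_1(X)$ is closed under $\wedge$ and $\vee$, and by Lemma~\ref{le:propoftau}(v) and (ii) it is a subeffect algebra closed under $f\mapsto 1-f$; one checks it is in fact an MV-subeffect algebra. Passing to the linear span in $C(X;\mathbb R)$ — note $\tilde\tau$ restricted to its range is the identity and its range $B=\mathrm{span}(R)$ is a closed subspace stable under $\vee,\wedge$, hence a \emph{sublattice} of $C(X;\mathbb R)$ containing the constants and separating... well, not necessarily separating points, but that is fine. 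The key classical fact I would invoke is that a norm-closed linear sublattice of $C(X;\mathbb R)$ containing the constant functions is automatically a subalgebra: this follows because on such a space $f^2$ can be recovered from lattice and linear operations via the identity $f^2=\lim_n\big(2nf - n^2((f-\tfrac1n\cdot 1)\vee\ldots)\big)$ — more cleanly, a closed sublattice equals $\{f: f \text{ is constant on the fibers of some equivalence relation}\}$ type description, or one uses that $t\mapsto t^2$ is a uniform limit of functions of the form $\max_i(a_i t+b_i)$ (piecewise-linear convex approximations) on a bounded interval, and closed sublattices are stable under such operations. Hence $B$ is a subalgebra, so $\tilde\tau(f)^2\in B$ and is fixed by $\tilde\tau$, giving $\tilde\tau(\tilde\tau(f)^2)=\tilde\tau(f)^2$, i.e. condition (iii') of Definition~\ref{de:condexp}; by Corollary~\ref{co:equiv}, $\tau$ is a conditional expectation.

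The main obstacle is the lemma that a norm-closed linear sublattice of $C(X;\mathbb R)$ containing the constants is a subalgebra. The clean argument: for $t$ in a bounded interval $[-M,M]$, the function $t\mapsto t^2$ is convex, hence a supremum (uniform limit of finite maxima) of affine functions $t\mapsto a_it+b_i$; applying this with $t=f(x)$ and using that $B$ is closed under finite $\vee$ and under affine combinations (it contains $1$ and is a subspace) and is norm-closed, we get $f^2\in B$ for every $f\in B$, hence $B$ is closed under squares and therefore under products via polarization $fg=\tfrac12((f+g)^2-f^2-g^2)$. I would spell this approximation step out carefully since it is the one genuinely analytic point; everything else is bookkeeping with the effect-algebra/MV structure and the already-quoted Theorems~\ref{th:extend}, \ref{le:condsa} and Corollary~\ref{co:equiv}.
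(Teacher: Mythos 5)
Your proof is correct, and the overall architecture matches the paper's: extend $\tau$ to the linear, positive, unital, idempotent map $\tilde\tau$ on $C(X;\mathbb R)$, show that ``conditional expectation'' corresponds to the range being a subalgebra while ``strong'' corresponds to it being a sublattice, and pass between the two. The easy direction (conditional expectation implies strong) is essentially identical to the paper's. Where you genuinely diverge is in the hard direction, at the step ``closed linear sublattice of $C(X;\mathbb R)$ containing the constants is a subalgebra'': the paper quotients $X$ by the equivalence relation $x\sim y\iff f(x)=f(y)$ for all $f$ in the range, so that the induced function system on $Y=X/\!\sim$ separates points, and then invokes the boolean-ring version of the Stone--Weierstrass theorem to conclude the range is all of $C(Y;\mathbb R)$. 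You instead argue directly that $t\mapsto t^2$ is, on a bounded interval, a uniform limit of finite maxima of affine functions, so a norm-closed subspace containing $1$ and stable under $\vee$ contains $f^2$ whenever it contains $f$, and then polarize. Your route is more elementary and self-contained (no quotient construction, no Stone--Weierstrass), at the cost of not producing the identification of the range with a $C(Y)$; the paper's route gives that structural description for free, which is in the spirit of its later results on the support of $\tau$. Two small points you should spell out if you write this up: the passage from ``$\tau(C_1(X))$ is closed under pointwise $\wedge,\vee$'' (which is what strongness gives, since in $C_1(X)$ all infima exist and are pointwise) to ``the range of $\tilde\tau$ is a sublattice'' needs the routine translate-and-rescale argument with constants, and the norm-closedness of the range follows because it is the kernel of the bounded map $I-\tilde\tau$.
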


\begin{proof} Let $\tau$ be a conditional expectation on $M$. By Corollary \ref{co:equiv}, the range of $\tau$ is a JC-effect-subalgebra of $M$. It follows that $\tau(M)$ is the unit interval in a C*-subalgebra of $C(X)$, and is therefore lattice ordered.
This implies that $\tau$ is strong.

Let $\tau$ be a strong state operator on $C_1(X)$ and let $\tta$ be its extension to $C(X;\mathbb R)$.
 The range $\tta(C(X; {\mathbb R}))$ is a closed linear subspace of $C(X;{\mathbb R})$ with the positive cone
$\tta(C(X,{\mathbb R}))\cap C(X;{\mathbb R})^+=\tta(C(X;\mathbb R)^+)$. By Theorem \ref{th:extend},  we obtain that
$\tta(C(X;{\mathbb R})^+)$ is lattice ordered, and hence $\tta(C(X;{\mathbb R}))$ is a lattice.
Let us define a relation $x\sim y$, $x,y\in X$, iff $f(x)=f(y)$ for all $f\in \tta(C(X;{\mathbb R}))$. Then $\sim$ is an equivalence, and let $Y:=\{ [x]: x\in X\}$ denote the quotient space, then $Y$ is compact Hausdorff.
For all $f\in \tta(C(X;{\mathbb R}))$, define $\tilde{f}[x]=f(x)$, then $\tilde{f}[x]$ is well defined, and does not depend on the choice of $x\in [x]$.
The mapping $f\mapsto \tilde{f}$ is an isometric linear isomorphism $C(X;\mathbb R)\to C(Y;\mathbb R)$, and $({f\wedge g})^{\sim}[x]=f\wedge g(x)=f(x)\wedge g(x)=\tilde{f}[x]\wedge \tilde{g}[x]$. The set of functions
$B:=\{ \tilde{f}:Y\to {\mathbb R}: f\in \tta(C(X;{\mathbb R}))\}$ satisfies the following conditions:

\begin{enumerate}
\item[(i)]  $B$ separates points.
\item[(ii)]   $B$ contains the constant function $1$.
\item[(iii)]  If $f\in B$ then $\alpha f\in B$ for all constants $\alpha \in {\mathbb R}$.
\item[(iv)]  $B$ is a boolean ring; that is, if $f, g \in B$, then $f+g \in B$ and $\max \{f,g\} \in B$.
\end{enumerate}
By the boolean ring version of the Stone - Weierstrass theorem  \cite[Theorem 7.29]{HeSt},
$B$ is dense in $C(Y,{\mathbb R})$ hence $B=C(Y;{\mathbb R})$ and  the range $\tta(C(X;{\mathbb R}))$ of $\tta$ is a
JC- subalgebra of $C(X;{\mathbb R})$. By Corollary \ref{co:equiv}, $\tau$ is a conditional expectation.

\end{proof}

Let $\tau$ be any state operator on $C_1(X)$ and let $\tta$ be its extension to $C(X;\mathbb R)$.
 By decomposing any $f\in C(X)$ into real and imaginary parts, $\tau$ can be extended to $C(X)$,
we  denote this extension  again by $\tta$. It is easy to see that $\tta$ is a  linear positive and idempotent map on
$C(X)$. By Theorems \ref{th:str_cond} and \ref{le:condsa}, $\tau$ is strong if and only if
 $\tta$ is a conditional expectation in the sense of operator algebras, as introduced in
\cite{U}.

Let now  $\tau$ be any state operator on $C_1(X)$ with extension $\tilde{\tau}$ on $C(X)$.
Then
$I_{\tau}:=\{ a\in C_1(X): \tau(a)=0\}$ is an ideal in $C_1(X)$  and $\tau$ induces a faithful state operator on the quotient
$C_1(X)|I_{\tau}$, see Theorem \ref{th:quotfaithful}.  Put $\mathcal I_\tau:=\{ f\in C(X): \tilde\tau(f^*f)=0\}$.

\begin{theorem}\label{th:id-id} Let $\tau$ be a  state operator  on $C_1(X)$. Then  $\mathcal I_\tau$ is a closed ideal in $C(X)$ and $C_1(X)|{I_\tau}\simeq \mathcal E(C(X)|{\mathcal I_\tau})$.
\end{theorem}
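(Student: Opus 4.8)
The plan is to analyze $\mathcal I_\tau := \{f \in C(X) : \tilde\tau(f^*f) = 0\}$ directly, using the Kadison--Schwarz inequality \eqref{eq:ineq} for the positive unital map $\tilde\tau$, and then to identify the quotient MV-effect algebra $C_1(X)|I_\tau$ with the unit interval of the quotient C*-algebra $C(X)|\mathcal I_\tau$. First I would check that $\mathcal I_\tau$ is a closed ideal. For closedness: if $f_n \to f$ in norm, then $f_n^*f_n \to f^*f$, and since $\tilde\tau$ is positive hence bounded (norm-continuous, being unital positive on a C*-algebra), $\tilde\tau(f_n^*f_n) \to \tilde\tau(f^*f)$, so $\tilde\tau(f^*f) = 0$. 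For the ideal property, since $C(X)$ is abelian it suffices to show $\mathcal I_\tau$ is a subspace closed under multiplication by arbitrary elements of $C(X)$. Closure under scalar multiplication is immediate. For additivity and the module property, the key point is that $\mathcal I_\tau$ coincides with $\{f \in C(X) : \tilde\tau(f) = 0\}$ restricted appropriately --- more precisely, I would invoke the argument already used in Theorem~\ref{thm:jordan}: by Kadison--Schwarz, $\tilde\tau(f)^*\tilde\tau(f) \le \tilde\tau(f^*f)$, so $f \in \mathcal I_\tau$ implies $\tilde\tau(f) = 0$; and conversely, applying the equality case is not available for general $\tau$, so instead I would argue via the sandwich $0 \le \tilde\tau(f^*f)$ together with the Cauchy--Schwarz-type inequality $|\tilde\tau(g^*f)|^2 \le \tilde\tau(g^*g)\tilde\tau(f^*f)$ (valid for any positive linear functional composed with $\tilde\tau$, equivalently for the positive map $\tilde\tau$ after composing with a state), which shows that $f \in \mathcal I_\tau$ and $g \in C(X)$ arbitrary gives $\tilde\tau(g^*f g) $ controlled by $\|g\|^2 \tilde\tau(f^*f)$-type bounds, yielding $gf \in \mathcal I_\tau$ and, by polarization, additivity.

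Next I would establish the isomorphism $C_1(X)|I_\tau \simeq \mathcal E(C(X)|\mathcal I_\tau)$. Since $\mathcal I_\tau$ is a closed ideal in the commutative C*-algebra $C(X)$, the quotient $C(X)|\mathcal I_\tau$ is again a commutative C*-algebra, isometrically $*$-isomorphic to $C(Z)$ for the hull $Z \subseteq X$ of $\mathcal I_\tau$; its unit interval $\mathcal E(C(X)|\mathcal I_\tau)$ is the MV-effect algebra of $[0,1]$-valued continuous functions on $Z$. On the other side, $I_\tau = \{a \in C_1(X) : \tau(a) = 0\}$ is the effect-algebra ideal from Theorem~\ref{th:quotfaithful}. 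I would show $I_\tau = \mathcal I_\tau \cap C_1(X)$: if $a \in C_1(X)$ and $\tau(a) = 0$, then since $0 \le a \le 1$ we have $a^2 \le a$ (pointwise, as $a$ is $[0,1]$-valued), so $\tilde\tau(a^2) \le \tilde\tau(a) = 0$, giving $a \in \mathcal I_\tau$; conversely $\tilde\tau(a^*a) = \tilde\tau(a^2) = 0$ with Kadison--Schwarz gives $\tilde\tau(a) = \tau(a) = 0$. Hence the two congruences induced on $C_1(X)$ agree, and the natural map $[a]_{I_\tau} \mapsto [a]_{\mathcal I_\tau}$ is a well-defined bijection between $C_1(X)|I_\tau$ and the image of $C_1(X)$ in $C(X)|\mathcal I_\tau$. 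Finally I would check this image is exactly $\mathcal E(C(X)|\mathcal I_\tau)$: every $[0,1]$-valued continuous function on $Z$ lifts (by a Tietze-type argument, or simply because restriction $C_1(X) \to C_1(Z)$ is surjective onto the unit interval, $Z$ being a closed subspace of the compact Hausdorff space $X$) to an element of $C_1(X)$, and the MV-operations are preserved since $\ominus, \oplus, {}'$ on both sides are computed pointwise.

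The main obstacle I anticipate is verifying that $\mathcal I_\tau$ is closed under multiplication by arbitrary elements of $C(X)$ --- i.e., that it really is a two-sided (here, since abelian, just an) ideal and not merely a hereditary cone or a subspace. The cleanest route is to note that for each state $\omega$ on $C(X)$, the composition $\omega \circ \tilde\tau$ is a positive linear functional, hence satisfies Cauchy--Schwarz $|\omega(\tilde\tau(g^*f))|^2 \le \omega(\tilde\tau(g^*g))\,\omega(\tilde\tau(f^*f))$; from $f \in \mathcal I_\tau$ one gets $\omega(\tilde\tau(f^*f)) = 0$ for all $\omega$, hence $\omega(\tilde\tau((gf)^*(gf))) = \omega(\tilde\tau(f^* g^* g f)) \le \|g\|^2 \omega(\tilde\tau(f^* f)) = 0$ using $f^*g^*gf \le \|g\|^2 f^*f$ and positivity of $\tilde\tau$ and $\omega$; since this holds for all states $\omega$, $\tilde\tau((gf)^*(gf)) = 0$, so $gf \in \mathcal I_\tau$. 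Polarization via the identity $4 g^* f = \sum_{k=0}^{3} i^k (f + i^k g)^*(f + i^k g)$ then gives that $\mathcal I_\tau$ is a subspace, completing the ideal verification. The remaining steps --- the hull--kernel description of the quotient and the MV-structure identification --- are routine commutative C*-algebra and MV-effect-algebra bookkeeping, relying on Theorem~\ref{th:quotfaithful} for the quotient construction on the effect-algebra side.
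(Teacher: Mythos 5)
Your proposal is correct and follows essentially the same route as the paper: verify that $\mathcal I_\tau$ is a closed ideal, use the equivalence $\tilde\tau(f)=0\Leftrightarrow\tilde\tau(f^2)=0$ for $f\ge 0$ (triangle/Kadison--Schwarz) to see that the congruence induced by $I_\tau$ on $C_1(X)$ coincides with the one induced by $\mathcal I_\tau$, and use the hull--kernel description of $\mathcal I_\tau$ together with a Tietze/truncation argument for surjectivity onto $\mathcal E(C(X)|\mathcal I_\tau)$. The only real variation is in proving that $\mathcal I_\tau$ is a linear subspace: the paper uses the pointwise bound $|\alpha f+\beta g|\le|\alpha||f|+|\beta||g|$ together with the equivalence above, while you use Cauchy--Schwarz for the positive functionals $\omega\circ\tilde\tau$ to kill the cross terms $\tilde\tau(g^*f)$ --- which also works, though note it is that Cauchy--Schwarz estimate, not the polarization identity you cite at the end (which by itself would be circular), that actually yields additivity.
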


\begin{proof}
We start with a simple observation. For all $f\geq 0$, $\tilde{\tau}(f)=0 \, \Leftrightarrow\, \tilde{\tau}(f^2)=0$.
Indeed, let $f\in C(X)$, $f\geq 0$. Assume $\tilde{\tau}(f)=0$. Then $f^2\leq ||f||.f$, hence $0\leq \tilde{\tau}(f^2)\leq ||f||\tilde{\tau}(f)=0$.
Conversely, let $\tilde{\tau}(f^2)=0$. By Kadison inequality, $0=\tilde{\tau}(f^2)\geq \tilde{\tau}(f)^2$, which entails $\tilde{\tau}(f)=0$.

To prove that $\mathcal I_\tau$ is an ideal, let $f,g\in \mathcal I_\tau$ and $\alpha, \beta \in {\mathbb C}$.
Then  $0\leq |\alpha f+\beta g|\leq |\alpha||f|+|\beta||g|$, whence $\tta(|\alpha f+\beta g|)\leq |\alpha|\tta(|f|)+|\beta|\tta(|g|)=0$. This entails  $\tta(|\alpha f+\beta g|^2)=0$, hence $\alpha f+\beta g\in \mathcal I_\tau$. Now let
$f\in \mathcal I_\tau$, $g\in C(X)$. Then
$(fg)^*(fg)=f^*fg^*g\leq |g^*g|f^*f$, whence $\tta((fg)^*(fg))=0$, and $fg\in \mathcal I_\tau$.

Let $f\in C_1(X)$ and let $[f]\in C_1(X)|{I_\tau}$ and $[f]^{\sim}\in C(X)|{\mathcal I_\tau}$
be the equivalence classes containing $f$, i.e. $[f]=\{g\in C_1(X), f\Delta g \in I_\tau\}$ and $[f]^{\sim}=\{g\in C(X),  f-g\in \mathcal I_\tau\}$. For $g\in C_1(X)$, $f\Delta g=|f-g|$, so that
$g\in [f]$ iff $g\in [f]^{\sim}$. Moreover, let $f^{1/2}=h$, then $[f]^{\sim}=[h^2]^{\sim}=([h]^\sim)^2\ge 0$ and similarly for $[1]^{\sim}-[f]^{\sim}=[1-f]^{\sim}$, so that $[f]^{\sim}\in \mathcal E(C(X)|{\mathcal I_\tau})$.
It follows that $[f]\mapsto [f]^{\sim}$ is a well defined injective map $C_1(X)|{I_\tau}\to \mathcal E(C(X)|{\mathcal I_\tau})$ and it is easy to see that it is additive. It is now enough to check that any equivalence class  $\mathcal E(C(X)|{\mathcal I_\tau})$ contains an element of $C_1(X)$.
So let $h\in C(X)$, $[h]^\sim\in \mathcal E(C(X)|{\mathcal I_\tau})$. We may clearly suppose that $h\ge 0$,
 so that $\bar h=h\wedge 1\in C_1(X)$. We will show that $\bar h\in [h]^\sim$.

Since $\mathcal I_\tau$ is a closed ideal in $C(X)$, $\mathcal I_\tau=\{ f\in C(X), f(x)=0, x\in K\}$, where $K\subset X$ is a closed subset given by $K=\{x\in X, f(x)=0, f\in \mathcal I_\tau\}$ \cite[Theorem 3.4.1]{KaRi}.  Since $[1-h]^\sim\ge 0$, there is a positive element $g\in C(X)$
 and some $f\in \mathcal I_\tau$  such that $1-h=g+f$. Hence $h(x)\le 1$ for $x\in K$, so that $h(x)-\bar h(x)=0$ for
 $x\in K$, that is, $h-\bar h\in \mathcal I_\tau$ and $\bar h\in [h]^\sim$.

\end{proof}

We have $C(X)|{\mathcal I_\tau}\simeq C(K)$, where the isomorphism is given by
$[f]^\sim\mapsto f|_K$. We will call the set $K$ the \emph{support of $\tau$}.

\begin{corollary} Let  $\tau$ be a state operator on $C_1(X)$. Then there is a closed subset
$K\subset X$ such that $C_1(X)|{I_\tau}\simeq C_1(K)$. Moreover, $\tau$ induces a faithful conditional expectation
 $\mu$ on $C(K)$, such that $\mu(f|_K)=\tta(f)|_K$, $f\in C(X)$.

\end{corollary}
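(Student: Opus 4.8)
The plan is to read off the first assertion from Theorem \ref{th:id-id} together with the identification $C(X)|{\mathcal I_\tau}\simeq C(K)$ noted just after it, and to produce $\mu$ directly from the extension $\tta$ of $\tau$ to $C(X)$. For the first part, Theorem \ref{th:id-id} gives $C_1(X)|{I_\tau}\simeq \mathcal E(C(X)|{\mathcal I_\tau})$; since $\mathcal I_\tau$ is a closed ideal of the abelian C*-algebra $C(X)$ it is the ideal of all functions vanishing on the closed set $K=\{x\in X:f(x)=0\ \forall f\in\mathcal I_\tau\}$ (\cite[Theorem 3.4.1]{KaRi}), so the restriction map $[f]^{\sim}\mapsto f|_K$ is a $*$-isomorphism $C(X)|{\mathcal I_\tau}\to C(K)$ (its surjectivity being the Tietze extension theorem, $X$ being compact Hausdorff, hence normal). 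Passing to unit intervals, $C_1(X)|{I_\tau}\simeq\mathcal E(C(X)|{\mathcal I_\tau})\simeq\mathcal E(C(K))=C_1(K)$, which is the first claim.

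For the second part I would \emph{define} $\mu:C(K)\to C(K)$ by $\mu(f|_K):=\tta(f)|_K$, $f\in C(X)$. Every element of $C(K)$ has the form $f|_K$ by Tietze, so it suffices to check independence of the extension $f$. First, $\tta(\mathcal I_\tau)=\{0\}$: if $g\in\mathcal I_\tau$ then $\tta(|g|^2)=0$, and since $|g|\ge 0$ the observation in the proof of Theorem \ref{th:id-id} gives $\tta(|g|)=0$; as evaluation at any point $x$ composed with $\tta$ is a positive functional on $C(X)$, we get $|\tta(g)(x)|\le\tta(|g|)(x)=0$, so $\tta(g)=0$ (alternatively, $\tta(g)^*\tta(g)\le\tta(g^*g)=0$ by Kadison--Schwarz, which applies since a positive unital map out of the commutative C*-algebra $C(X)$ is completely positive). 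Hence $f|_K=f'|_K$ forces $f-f'\in\mathcal I_\tau$, so $\tta(f)=\tta(f')$ and $\mu$ is well defined. The map $\mu$ is visibly linear; it is unital since $\tta(1)=1$; it is positive, because a nonnegative $g\in C(K)$ admits a positive extension $f\ge 0$ in $C(X)$ and $\tta$ is positive; it is idempotent, since $\mu(\mu(f|_K))=\mu(\tta(f)|_K)=\tta(\tta(f))|_K=\tta(f)|_K$ using $\tta^2=\tta$; and being unital and positive it has norm $1$. In particular $\mu$ restricts to a state operator on $C_1(K)$.

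It remains to see that $\mu$ is faithful and is a conditional expectation. For faithfulness, suppose $\mu(g^*g)=0$ with $g=f|_K$; then $\tta(f^*f)|_K=0$, so the positive element $h:=\tta(f^*f)$ lies in $\mathcal I_\tau$, i.e.\ $\tta(h^2)=\tta(h^*h)=0$; Kadison's inequality together with $\tta(h)=\tta(\tta(f^*f))=h$ gives $0=\tta(h^2)\ge\tta(h)^2=h^2$, so $h=\tta(f^*f)=0$, whence $f\in\mathcal I_\tau$ and $g=0$. For the conditional-expectation property I would note that $\mu|_{C_1(K)}$ is a \emph{faithful} state operator on the JC-effect algebra $C_1(K)$ (faithfulness by the paragraph just proved, or by transporting $\hat\tau$ of Theorem \ref{th:quotfaithful} through the isomorphism of the first part), hence \emph{strong} by Lemma \ref{le:faithful}, hence a conditional expectation by Theorem \ref{th:str_cond}; by Corollary \ref{co:equiv} its range is a sub-JC-effect algebra of $C_1(K)$, i.e.\ the unit interval of a C*-subalgebra of $C(K;{\mathbb R})$, so after complexification the range of $\mu$ is a C*-subalgebra of $C(K)$. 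Thus $\mu$ is a norm-one projection of $C(K)$ onto a C*-subalgebra, and therefore a conditional expectation in the operator-algebra sense (\cite{U}), with $\mu(f|_K)=\tta(f)|_K$ by construction. The only step with real content is identifying the range of $\mu$ as a C*-subalgebra, and this is precisely Theorem \ref{th:str_cond} applied to the induced (faithful, hence strong) operator, so no new argument is needed; an equivalent route is to build $\mu$ by transporting $\hat\tau$ and its extension from Theorem \ref{th:quotfaithful} along the isomorphism chain of the first part, at the price of a short unwinding of the identifications to recover the formula $\mu(f|_K)=\tta(f)|_K$.
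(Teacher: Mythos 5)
Your proof is correct and follows the route the paper intends: the first claim is Theorem \ref{th:id-id} combined with the identification $C(X)|{\mathcal I_\tau}\simeq C(K)$ via $[f]^\sim\mapsto f|_K$, and the second is obtained by pushing $\tta$ down to $C(K)$ (well-definedness via $\tta(\mathcal I_\tau)=0$), checking faithfulness with the Kadison--Schwarz/idempotence argument, and invoking Theorem \ref{th:faithful} (equivalently Lemma \ref{le:faithful} plus Theorem \ref{th:str_cond}) to conclude that a faithful state operator on $C_1(K)$ is a conditional expectation. The paper leaves these verifications implicit, and your write-up supplies them accurately.
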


Let now $\tau$ be a state operator on $M$ and let $\tta$ be its extension to $C(X)$. Then for $x\in X$,
the map $f\mapsto \tta(f)(x)$ is a state on $C(X)$.
 By the Riesz representation theorem, there is some Radon probability measure $\lambda_x$ on $X$ such that
$\tta(f)(x)=\int f(y)\lambda_x(dy)$ for all $f\in C(X)$. Let $K$ be the support of $\tau$.
Since for $f\ge 0$, $f\in \mathcal I_\tau$ iff $\tta(f)=0$, it is easy to see that for any $x\in X$, the support
 of $\lambda_x$ must lie in $K$. Moreover, since any function  $g\in C(K)$ has a continuous extension to $X$,
we see that $g\mapsto \int g(y) \lambda_x(dy)$ defines a linear positive unital map $\varphi: C(K)\to C(X)$
 and $\tta(f)=\varphi(f|_K)$, $f\in C(X)$.

\begin{theorem} A map $\tau$ on $C_1(X)$ is a Jordan state operator if and only if there is a closed
subset $K\subset X$ and a  linear positive unital extension $\varphi:C(K)\to C(X)$ such that
$\tau(f)=\varphi(f|_K)$ for all $f\in C_1(X)$.

\end{theorem}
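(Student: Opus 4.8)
The plan is to prove the equivalence in two directions, using the structure theory already developed.

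\smallskip

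\emph{The ``if'' direction.} Suppose $\tau$ is a Jordan state operator on $C_1(X)$, with extension $\tta$ on $C(X;\mathbb R)$ (and then on $C(X)$). By Theorem \ref{thm:jordan}, $\mathcal I_\tau=\{f\in C(X):\tta(f^*f)=0\}$ is a (closed, by Theorem \ref{th:id-id}) Jordan ideal and $[f]\mapsto\tta(f)$ is an isometric Jordan isomorphism of the quotient onto the range $\tta(C(X))$; by the discussion following Theorem \ref{th:id-id} and its Corollary, $C(X)|\mathcal I_\tau\simeq C(K)$ for the closed support $K\subset X$, via $[f]^\sim\mapsto f|_K$. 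Composing these identifications, $\tta$ factors as $\tta(f)=\varphi(f|_K)$ where $\varphi:C(K)\to C(X)$ is the resulting map. It is linear, positive (composite of positive maps), and unital ($\tta(1)=1$, and $1|_K=1$). That $\varphi$ is an \emph{extension}, i.e.\ $\varphi(g)|_K=g$ for $g\in C(K)$, follows because $\tta$ is idempotent: restricting $\tta(f)=\varphi(f|_K)$ to $K$ gives $\varphi(f|_K)|_K=\tta(f)|_K=\tta(\tta(f))|_K=\varphi(\tta(f)|_K)=\varphi(\varphi(f|_K)|_K)$, and since every $g\in C(K)$ is of the form $f|_K$, this reads $g=\varphi(g)|_K$ after one more application. (Concretely one uses that $f-\tta(f)\in\mathcal I_\tau$ so $f|_K=\tta(f)|_K$, which is exactly $\varphi(g)|_K=g$.) Restricting to $C_1(X)$ gives $\tau(f)=\varphi(f|_K)$.

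\smallskip

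\emph{The ``only if'' direction.} Conversely, suppose $\tau(f)=\varphi(f|_K)$ for a closed $K\subset X$ and a linear positive unital extension $\varphi:C(K)\to C(X)$. First check that this lands in $C_1(X)$: if $0\le f\le 1$ then $0\le f|_K\le 1$ in $C(K)$, and positivity plus unitality of $\varphi$ give $0\le\varphi(f|_K)\le\varphi(1)=1$. The restriction map $r:f\mapsto f|_K$ is a unital $*$-homomorphism $C(X)\to C(K)$, hence additive on the effect algebras; so $\tau=\varphi\circ r$ is unital and additive. Idempotence: $\tau(\tau(f))=\varphi((\varphi(f|_K))|_K)=\varphi(f|_K)=\tau(f)$, using that $\varphi$ is an extension. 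Thus $\tau$ is a state operator. To see it is a \emph{Jordan} state operator we must verify $\tta(a^2)=\tta(\tta(a)^2)$ for $a\in\mathcal E(C_1(X))$, equivalently $\varphi((a^2)|_K)=\varphi((\varphi(a|_K)|_K)^2)=\varphi((a|_K)^2)$, which holds because $r$ is multiplicative: $(a^2)|_K=(a|_K)^2$. So the defining equality of a Jordan state operator (Theorem \ref{thm:jordan}) holds identically.

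\smallskip

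The main obstacle is the ``only if'' subtlety of knowing that $\tta(a)^2|_K$ coincides with $(a|_K)^2$ — i.e.\ that $\varphi$ composed with restriction to $K$ is the identity on $C(K)$ — which is precisely the ``extension'' hypothesis on $\varphi$; once that is in hand, multiplicativity of the restriction homomorphism makes the Jordan identity automatic. For the ``if'' direction the work is bookkeeping: correctly stringing together the Jordan isomorphism $C(X)|\mathcal I_\tau\simeq\tta(C(X))$ from Theorem \ref{thm:jordan} with the identification $C(X)|\mathcal I_\tau\simeq C(K)$ from Theorem \ref{th:id-id} to produce $\varphi$, and checking that the composite is genuinely a right inverse to restriction. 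A minor point worth a sentence is that $\varphi$ maps into $C(X)$ and not just into some closure — this is fine since $\tta$ already maps $C(X)$ into $C(X)$.
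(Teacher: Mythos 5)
Your proof is correct and follows essentially the same route as the paper: both arguments reduce the Jordan property to the fact that $f-\tta(f)$ vanishes on the support $K$ (equivalently, that the induced map $\varphi\colon C(K)\to C(X)$ is a right inverse of restriction, with $\mathcal I_\tau=\ker\tta=\{f: f|_K=0\}$), the only real difference being that you assemble $\varphi$ from the quotient isomorphism $C(X)|{\mathcal I_\tau}\simeq C(K)$ of Theorem \ref{th:id-id} together with Theorem \ref{thm:jordan}, while the paper uses the Riesz-representation measures $\lambda_x$ introduced in the remarks preceding the theorem. The only slip is cosmetic: your ``if''/``only if'' labels are interchanged relative to the direction of the statement they prove.
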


\begin{proof} Let $\tau$ be a state operator.  Then  $\tau$ is a Jordan state operator if and only if
$\tau(f)(x)=f(x)$ for all $x\in K$, $f\in C_1(X)$ Indeed, if $\tau$ is Jordan, then by the proof of Theorem \ref{thm:jordan}, $g\in \mathcal I_\tau$ iff $\tta(g)=0$, for any real $g\in C(X)$. Hence for $f\ge 0$,
$f^2-\tau(f)^2\in \mathcal I_\tau$, so that $(f^2-\tau(f)^2)(x)=0$ for all $x\in K$. It follows that $f(x)=\tau(f)(x)$
 on $K$ for $f\in C_1(X)$. Conversely, for $x\in K$, $\tau(f)(x)=f(x)$ for all $f\in C_1(X)$ implies
$\tau(f)^2(x)=f^2(x)=\tau(f^2)(x)$, so that $g:=\tau(f^2)-\tau(f)^2\in \mathcal I_\tau$. Since $g\ge 0$, this
 is equivalent with $\tau(g)=0$. Hence $\tau$ is Jordan.

Let now $\tau$ be a Jordan state operator and let $K$ be its support. By the remarks  above,
$\tau$ defines a linear positive unital map $\varphi: C(K)\to C(X)$ such that $\tta(f)=\varphi(f|_K)$.
Moreover, by the previous paragraph, we clearly have $\tta(f)(x)=f(x)$ for $x\in K$ and for all $f\in C(X)$.
 Let $g\in C(K)$ and let $f\in C(X)$ be a continuous extension of $g$, then $\varphi(g)(x)=\tta(f)(x)=f(x)=g(x)$ for
 $x\in K$.

Conversely, if $\tau$ is of the given form, then $\tau$ is clearly a state operator and $\tau(f)(x)=\varphi(f|_K)(x)=
f(x)$ for $x\in K$, so that $\tau$ is Jordan.

\end{proof}

\section{State operators and conditional expectations on convex MV-algebras}

In this section, we will need some facts from the theory of MV-algebras, see \cite{CDM, DvPu} for more details.

Let $(M;\boxplus,^*,0)$ be an MV-algebra, then $M$ is called a $\sigma$- MV-algebra if it is a $\sigma$-lattice.
The set ${\mathcal B}(M):=\{ a\in M:a\boxplus a=a\}$ of all idempotents in $M$, with the operations inherited from $M$, is a boolean $\sigma$-algebra. It is the largest boolean $\sigma$-subalgebra of $M$.

A special example of a $\sigma$-MV-algebra is the following. Let $X$ be a nonempty set. A \emph{tribe} ${\mathcal T}$ over $X$ is a collection of functions ${\mathcal T}\subseteq [0,1]^X$ such that the zero function $0(x)=0$ is in ${\mathcal T}$ and the following is satisfied:
\begin{enumerate}
\item $f\in {\mathcal T}\, \implies \, 1-f\in {\mathcal T}$;
\item $f,g\in {\mathcal T}\,\implies \, f\boxplus g:=\min(f+g,1)\in {\mathcal T}$;
\item $f_n\in {\mathcal T}, n\in \mathbb N$ and $f_n\nearrow f$ (pointwise) $\implies \, f\in {\mathcal T}$.
\end{enumerate}
Every tribe is an MV-algebra - so-called Bold algebra of fuzzy sets, and also a $\sigma$-MV-algebra where the lattice operations $\vee,\wedge$ coincide with the pointwise supremum and infimum, respectively, of $[0,1]$-valued functions defined on $X$. Idempotents in $\mathcal T$ are elements of the form $\chi_B\in \mathcal T$ and
 the sets $B\subseteq X$, $\chi_B\in \mathcal T$
(so-called crisp sets) form  a $\sigma$-algebra of sets, which we also denote by $\mathcal B(\mathcal T)$.  A $\sigma$-additive state $m$ on a tribe ${\mathcal T}$ over $X$ determines a probability measure on ${\mathcal B}({\mathcal T})$, by $P(A):=m(\chi_A)$ for any $A\in {\mathcal B}({\mathcal T})$. By \cite{BuKl}, each $\sigma$-additive state on ${\mathcal T}$ has the following integral representation: for any $f\in {\mathcal T}$, $m(f)=\int_XfdP$.

The following theorem is a generalization of the Loomis-Sikorski theorem to $\sigma$-MV-algebras \cite{MuLS, DvLS}.

\begin{theorem}\label{th:LS}
Let $M$ be a $\sigma$-MV-algebra. Then there exists
 a tribe $M^*$ over a compact Hausdorff space $X$ and a $\sigma$-homomorphism $\eta$ of $M^*$ onto $M$.
\end{theorem}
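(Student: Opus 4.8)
\textbf{Proof plan for the Loomis--Sikorski theorem for $\sigma$-MV-algebras (Theorem \ref{th:LS}).}

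The plan is to reduce to the known Loomis--Sikorski theorem and to use the Mundici functor between MV-algebras and unital $\ell$-groups (see \cite{Mu}). First I would apply the Mundici correspondence: the $\sigma$-MV-algebra $M$ corresponds to a unital Abelian $\ell$-group $(G,u)$ with $M \simeq G[0,u]$, and the $\sigma$-completeness of $M$ translates into $\sigma$-completeness properties of the bounded part of $G$. Alternatively, and more directly, I would work with the Stone-type representation: the Boolean $\sigma$-subalgebra $\mathcal B(M)$ of idempotents embeds, by the classical Loomis--Sikorski theorem for Boolean $\sigma$-algebras, as a quotient of a $\sigma$-field of subsets of its Stone space $X$ (which is compact Hausdorff), via a $\sigma$-homomorphism from that $\sigma$-field onto $\mathcal B(M)$.

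The key steps, in order, are: (1) Let $X$ be the Stone space of the Boolean algebra $\mathcal B(M)$ of idempotents of $M$; it is compact Hausdorff. (2) Represent the elements of $M$ as $[0,1]$-valued functions on $X$: using that every element $a\in M$ has a spectral-type decomposition in terms of idempotents (the ``good sequences'' or, via Mundici, the bounded elements of $G$ realized as continuous functions on the Stone space of the associated projections), assign to each $a\in M$ a function $\hat a: X \to [0,1]$, and let $M^{*}$ be the smallest tribe over $X$ containing all such $\hat a$ together with the characteristic functions of clopen sets. (3) Show $M^{*}$ is indeed a tribe: closure under $1-(\cdot)$, under $\boxplus$, and under pointwise increasing limits — the last using that $X$ is Baire-like enough (or passing to the quotient by meager sets) so that pointwise sups of the generating functions can be adjusted on a ``small'' set to stay inside $M^{*}$. (4) Define $\eta: M^{*}\to M$ by sending each generator $\hat a$ back to $a$ and extending; verify $\eta$ is well-defined, a $\sigma$-homomorphism, and surjective (surjectivity is immediate since every $a$ is hit by $\hat a$). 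The crucial point is that $\eta$ respects countable suprema, which follows because the kernel of $\eta$ is exactly a $\sigma$-ideal of ``null'' functions (functions vanishing off a meager, or measure-null-type, set), and quotienting a tribe by a $\sigma$-ideal preserves the $\sigma$-MV structure.

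The main obstacle will be step (3)--(4): controlling the interaction between the \emph{pointwise} increasing limits required in the definition of a tribe and the \emph{abstract} countable suprema in $M$. Concretely, if $a_n \nearrow a$ in $M$, the pointwise supremum $\bigvee \hat a_n$ need not literally equal $\hat a$ on all of $X$ — they can differ on a meager set — so one must either (i) enlarge $M^{*}$ by a suitable $\sigma$-completion and then divide out the $\sigma$-ideal of functions supported on meager sets, checking this ideal is a tribe ideal, or (ii) invoke the Baire category / Rasiowa--Sikorski machinery on the Stone space to show the exceptional set is negligible. I would follow route (i): build $M^{*}$ as the tribe generated by the $\hat a$ plus all functions of the form $\sup_n \hat{a_n}$ for increasing sequences, show the map ``value modulo meager sets'' is a well-defined $\sigma$-homomorphism onto $M$ by transfinite/inductive verification on the generation of the tribe, and conclude. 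This is exactly the pattern of the proofs in \cite{MuLS, DvLS}, and the argument is structurally identical to the Boolean Loomis--Sikorski theorem with the single extra ingredient that the fibres are now $[0,1]$ rather than $\{0,1\}$, handled uniformly because $\boxplus$ and pointwise limits are continuous operations.
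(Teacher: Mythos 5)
This theorem is not proved in the paper: it is imported verbatim from \cite{MuLS,DvLS}, so there is no in-paper argument to compare against line by line. What the paper does commit to, in the paragraph following the statement, is a specific construction: $X$ is the set of \emph{extremal states} of $M$ (with the weak topology), each $a\in M$ is represented by $a^*(\omega):=\omega(a)$, $M^*$ is the tribe generated by these functions, and $\eta$ identifies functions that differ on a meager set. Your plan has the right overall architecture — represent, generate a tribe, quotient by a $\sigma$-ideal of ``negligible'' functions, and fight the mismatch between pointwise suprema and abstract suprema via Baire category — and you correctly locate the main obstacle there. So the shape is right; but as a proof it has concrete gaps.

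First, your choice of base space. You take $X$ to be the Stone space of the Boolean algebra $\mathcal B(M)$ of idempotents and propose to build $\hat a$ from a ``spectral-type decomposition in terms of idempotents.'' A general $\sigma$-MV-algebra can have very few idempotents (e.g.\ a finite {\L}ukasiewicz chain has $\mathcal B(M)=\{0,1\}$), so an element is not determined by idempotent data in any direct sense, and you never say how $\hat a$ is actually defined or why $a\mapsto\hat a$ is injective. The extremal-state space fixes both problems at once: $a^*(\omega)=\omega(a)$ is an explicit continuous function, and injectivity of $a\mapsto a^*$ is exactly semisimplicity of $\sigma$-complete MV-algebras — a nontrivial ingredient your plan does not mention but cannot do without, since otherwise $\eta$ is not even well defined on the generators. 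Second, the heart of the theorem — that for $a_n\nearrow a$ the set $\{\omega: \sup_n a_n^*(\omega)\neq a^*(\omega)\}$ is meager, that the relation ``differ on a meager set'' is a tribe $\sigma$-congruence, and that every $f\in M^*$ is meager-equivalent to some $a^*$ (this is what makes $\eta$ total, not just defined on generators) — is only named in your step (3)--(4), with an appeal to ``Baire category / Rasiowa--Sikorski machinery'' and ``transfinite/inductive verification.'' That is precisely the content of the proofs in \cite{MuLS,DvLS}, and it uses that $X$ is basically disconnected, hence a Baire space in which closures of open $F_\sigma$ sets are open; none of this is carried out. As a roadmap to the literature your proposal is accurate; as a proof it defers the two points (faithful representation, meagerness of the exceptional sets) on which the theorem actually rests.
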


Elements of the set $X$ can be identified with extremal states on $M$. For each $a\in M$ there exists a unique continuous
function $a^*\in M^*$ such that $\eta(a^*)=a$, this function is given by $a^*(\omega):=\omega(a)$, $\omega\in X$.
A function $f\in M^*$ has the same image as $a^*$ iff the set
$\{ \omega \in X: f(\omega)\neq a^*(\omega)\}$ is a meager set (i.e., a countable union of nowhere dense sets).
Moreover, ${\mathcal B}(M^*)$  is mapped onto ${\mathcal B}(M)$.

Let $m$ be a  $\sigma$-additive state on $M$. Then
$m^*=m\circ \eta$ is a state on $M^*$ and $P^*:A\in {\mathcal B}(M^*)\mapsto m^*(\chi_A)$ is a probability measure on ${\mathcal B}(M^*)$.  For any $a\in M$ we have
\[
m(a)=m(\eta(a^*))=m^*(a^*)=\int_{X} a^*dP^*.
\]
In what follows, by a state we will always mean a $\sigma$-additive state.

The Loomis-Sikorski theorem enables us to  extend some results from probability theory to $\sigma$-MV-algebras.  Conditional expectations on $\sigma$-MV-algebras  have been studied  in \cite{DvPucond}. Similar results for the special case of MV-algebras with product were obtained in \cite{Kr}. Let $M$ be a $\sigma$-MV-algebra, $N$ a sub-$\sigma$-MV-algebra of $M$
and let $N^*$ be the sub-tribe of $M^*$ generated by $\{ b^*\in M^*:b\in N\}$.
Let $m$ be a state on $M$.
 For $b^*\in M^*$ we denote $P^*_{b^*}(a^*):=m^*(b^*\wedge a^*), a^*\in M^*$. Clearly, $P^*_{b^*}|{\mathcal B}(M^*)$ is a $\sigma$-additive measure on ${\mathcal B}(M^*)$, and $P^*_{b^*}(a^*)=P^*(B\cap A)$ if  $a^*=\chi_A, b^*=\chi_B$. The following definition was introduced in \cite{DvPucond}.

\begin{definition}\label{de:Kcond}\textrm{\cite[Definition 4.1]{DvPucond}}  An
MV-conditional expectation of $a\in M$ given $N$ in the state $m$ is a ${\mathcal B}(N^*)$-measurable function
$m(a|{\mathcal B}(N)):X \to {\mathbb R}$
such that for any $b\in {\mathcal B}(N)$,
\begin{equation}\label{eq:Kcond}
\int_{X}m(a|{\mathcal B}(N))(\omega)dP^*_{b^*}(\omega)=m(a\wedge b).
\end{equation}
\end{definition}

Clearly, the function $m(a|{\mathcal B}(N))$ is integrable with respect to $P^*$ and is determined uniquely a.e. $[P^*]$. Definition \ref{de:Kcond} implies that $m(a|{\mathcal B}(N))$ coincides with the Kolmogorovian conditional expectation of $a^*$ given ${\mathcal B}(N^*)$ with respect to $P^*$. Notice that the function $m(a|{\mathcal B}(N))$ need not belong to $N^*$, in general.

In \cite{DvPucond}, the equality  (\ref{eq:Kcond}) was extended to
\begin{equation}\label{eq:mvcond}
\int_{X}m(a|\mathcal B(N))(\omega)dP^*_{b^*}(\omega)=\int_{X}a^*dP^*_{b^*}(\omega)
\end{equation}
for all $b\in N$.
 Moreover, the following was proved.

\begin{theorem}\label{th:4.2}\textrm{\cite[Theorem 4.2]{DvPucond}}  The following properties are satisfied a.e. $[P^*]$:
\begin{enumerate}
\item $m(0|{\mathcal B}(N))=0$, $m(1|{\mathcal B}(N))=1$
\item $a\boxdot b=0$ implies $m(a\boxplus b|{\mathcal B}(N))=m(a|{\mathcal B}(N))+m(b|{\mathcal B}(N))$
\item $\forall a\in M$, $0\leq m(a|{\mathcal B}(N))\leq 1$.
\item For a given sequence $(a_n)_n, a_n\nearrow a$ implies $m(a_n|{\mathcal B}(N))\nearrow m(a|{\mathcal B}(N))$.
\end{enumerate}
\end{theorem}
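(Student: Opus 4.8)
The plan is to reduce all four assertions to standard facts about the classical (Kolmogorovian) conditional expectation, using the Loomis--Sikorski representation of Theorem \ref{th:LS}. As already observed after Definition \ref{de:Kcond}, the function $m(a|\mathcal B(N))$ is a version of the conditional expectation $E[a^*\mid\mathcal B(N^*)]$ of the bounded random variable $a^*$ on the probability space $(X,\mathcal B(M^*),P^*)$ relative to the sub-$\sigma$-algebra $\mathcal B(N^*)$: for $b=\chi_B$ with $B\in\mathcal B(N^*)$ one has $P^*_{b^*}=P^*(B\cap\cdot)$ and $a^*\wedge\chi_B=a^*\chi_B$ pointwise (because $0\le a^*\le 1$), so (\ref{eq:Kcond}) reads $\int_B m(a|\mathcal B(N))\,dP^*=m(a\wedge b)=\int_B a^*\,dP^*$; together with the $\mathcal B(N^*)$-measurability of $m(a|\mathcal B(N))$ this is precisely the defining property of $E[a^*\mid\mathcal B(N^*)]$. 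So it remains to transport the $\sigma$-MV structure of $M$ to $M^*$ and invoke the classical theory. Throughout I would use that $\eta$ is a $\sigma$-homomorphism whose kernel consists of the elements vanishing off a meager set, and that $m^*(\chi_A)=m(\eta(\chi_A))=0$ for every meager $A\in\mathcal B(M^*)$; hence meager Baire sets are $P^*$-null, and any equality valid ``modulo a meager set'' holds a.e.\ $[P^*]$.

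First I would settle (1) and (3). Since $0^*$ and $1^*$ are the constant functions $0$ and $1$, (1) is just $E[0\mid\mathcal B(N^*)]=0$ and $E[1\mid\mathcal B(N^*)]=1$; and $0\le a^*\le 1$ everywhere forces $0\le E[a^*\mid\mathcal B(N^*)]\le 1$ a.e.\ by monotonicity of the conditional expectation together with (1), which is (3). For (2), $a\boxdot b=0$ in $M$ gives $\eta(a^*\boxdot b^*)=a\boxdot b=0$, so $a^*(\omega)\boxdot b^*(\omega)=0$, i.e.\ $a^*(\omega)+b^*(\omega)\le 1$, for $\omega$ off a meager set; likewise $\eta(a^*\boxplus b^*)=a\boxplus b=\eta((a\boxplus b)^*)$, so that $(a\boxplus b)^*=a^*\boxplus b^*=a^*+b^*$ a.e.\ $[P^*]$. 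Linearity of the classical conditional expectation then yields $m(a\boxplus b|\mathcal B(N))=m(a|\mathcal B(N))+m(b|\mathcal B(N))$ a.e.

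The point needing slightly more care is (4). From $a_n\nearrow a$ in the $\sigma$-lattice $M$, first note that $a_n\le a_{n+1}$ gives $a_n^*\le a_{n+1}^*$ off a meager set, so after discarding a countable union of meager sets the sequence $(a_n^*(\omega))_n$ is nondecreasing; moreover, applying the $\sigma$-homomorphism $\eta$ to the join $\bigvee_n a_n^*$ in the tribe $M^*$ (which is the pointwise supremum) gives $\eta(\bigvee_n a_n^*)=\bigvee_n a_n=a=\eta(a^*)$, hence $\bigvee_n a_n^*=a^*$ off a meager set. Collecting these exceptional sets into one $P^*$-null set, we obtain $a_n^*(\omega)\nearrow a^*(\omega)$ off that set, and the monotone convergence theorem for conditional expectations gives $E[a_n^*\mid\mathcal B(N^*)]\nearrow E[a^*\mid\mathcal B(N^*)]$ a.e., i.e.\ $m(a_n|\mathcal B(N))\nearrow m(a|\mathcal B(N))$ a.e. The only real obstacle throughout is bookkeeping with exceptional sets: making sure the countably many meager sets arising in (4) (and the finitely many in (1)--(3)) can be combined into a single $P^*$-null set so that (1)--(4) hold simultaneously a.e., and checking that the tribe operations on $M^*$ (pointwise truncated sum, pointwise $\vee$) intertwine via $\eta$ with those of $M$ up to meager sets --- both of which are immediate from Theorem \ref{th:LS}.
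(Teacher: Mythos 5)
Your argument is correct. The paper does not prove this theorem at all --- it is imported verbatim from \cite[Theorem 4.2]{DvPucond} --- but your reduction to the classical Kolmogorovian conditional expectation $E[a^*\mid\mathcal B(N^*)]$ on $(X,\mathcal B(M^*),P^*)$ is exactly the identification the paper itself records in the sentence following Definition \ref{de:Kcond}, and it is the intended route; your handling of the exceptional sets (meager sets in $\mathcal B(M^*)$ are $P^*$-null because $\eta$ kills them and $P^*=m\circ\eta$ on characteristic functions) correctly supplies the only nontrivial bookkeeping needed to transport monotonicity, additivity and monotone convergence from the classical setting.
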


From now on, we will assume that the $\sigma$-MV-algebra $M$ is a convex MV-effect algebra. It is then easy to see that $M$ is \emph{weakly divisible}, that is,  for any integer $n\in {\mathbb N}$, there is an element $v\in M$ such that $nv$ is defined and $nv=1$; we then  write $v:=\frac{1}{n}$.
  Let $X$ be as in Theorem \ref{th:LS}. Then we have the following.

\begin{theorem}\label{th:wd}\textrm{\cite[Theorem 7.3.12]{DvPu}} An MV-algebra $M$ is weakly divisible and $\sigma$-complete if and only if $X$ is basically disconnected and $a\mapsto a^*$ is an MV-algebra isomorphism of $M$ onto  $C_1(X)$.
\end{theorem}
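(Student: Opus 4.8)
The plan is to deduce this statement from the classical Loomis--Sikorski-type representation already set up (Theorem~\ref{th:LS}) together with Stone-duality-style topological considerations. One direction is the easier one: if $X$ is basically disconnected and $a\mapsto a^*$ is an isomorphism of $M$ onto $C_1(X)$, then $M$ inherits from $C_1(X)$ both $\sigma$-completeness and weak divisibility. Indeed $C(X;\mathbb R)$ is Dedekind $\sigma$-complete precisely when $X$ is basically disconnected (this is the Nakano/Stone theorem on $C(X)$), so $C_1(X)$ is a $\sigma$-complete MV-algebra; and weak divisibility is immediate because the constant function $\tfrac1n$ belongs to $C_1(X)$ and $n\cdot\tfrac1n=1$. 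So the content is entirely in the forward direction.

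For the forward direction I would argue as follows. Assume $M$ is weakly divisible and $\sigma$-complete. By Theorem~\ref{th:LS} fix the tribe $M^*$ over a compact Hausdorff $X$ and the $\sigma$-homomorphism $\eta:M^*\to M$; recall $X$ can be taken to be the space of extremal states of $M$, and for each $a\in M$ the function $a^*(\omega)=\omega(a)$ is the unique \emph{continuous} member of $\eta^{-1}(a)$. The first step is to show $a\mapsto a^*$ is injective: if $a^*=b^*$ then $\omega(a)=\omega(b)$ for every extremal state, hence (since states separate points of any MV-algebra, via maximal ideals, cf.\ \cite[Prop.\ 2.2.33]{DvPu} together with the fact that the quotient by a maximal ideal embeds in $[0,1]$) one gets $a=b$. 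It is an MV-homomorphism because $\eta$ is one and the continuous representative is unique, so $(a\boxplus b)^*=a^*\boxplus b^*$ etc.\ on a comeager set, but both sides are continuous, hence equal everywhere. The crucial step is \emph{surjectivity} onto $C_1(X)$: I would show that the image $\{a^*:a\in M\}$ is a sub-MV-algebra of $C_1(X)$ that is closed and separates points and contains all constants $\tfrac kn$ (using weak divisibility, since $(\tfrac1n\cdot 1)^*$ is a continuous idempotent-free element taking the constant value $\tfrac1n$ by the averaging property of extremal states), then invoke an MV-algebraic Stone--Weierstrass theorem (as in \cite{CDM}) to conclude the image is all of $C_1(X)$. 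Finally, basic disconnectedness of $X$: the Boolean $\sigma$-algebra $\mathcal B(M)$ of idempotents is $\sigma$-complete, its Stone space is exactly the closure structure on $X$ coming from clopen sets $\{\omega:\omega(e)=1\}$, $e\in\mathcal B(M)$, and a Boolean algebra is $\sigma$-complete iff its Stone space is basically disconnected; one then checks this Stone space is (homeomorphic to) $X$ by showing the clopen sets $\{\omega: e^*(\omega)=1\}$ form a base, using that continuous functions separate points and $M^*\to M$ carries $\mathcal B(M^*)$ onto $\mathcal B(M)$.

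The main obstacle I expect is making the surjectivity argument clean: one must verify that the continuous representatives $a^*$ really do exhaust $C_1(X)$, which requires knowing that an arbitrary continuous $f:X\to[0,1]$ agrees off a meager set with $\eta$-image of some $g\in M^*$ \emph{and} that one can then replace that $g$ by the continuous representative of $\eta(g)$; equivalently, that every $f\in C_1(X)$ is the continuous representative of its own $\eta$-class, which is where basic disconnectedness feeds back in (on a basically disconnected space, a continuous function is already the ``canonical'' representative of its meager-equivalence class). Because of this circularity between surjectivity and basic disconnectedness, I would in fact prove basic disconnectedness of $X$ first, directly from $\sigma$-completeness of $\mathcal B(M)$ via Stone duality, and only then run the Stone--Weierstrass argument for surjectivity. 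Everything else---that $a\mapsto a^*$ is an injective MV-homomorphism, that weak divisibility supplies the rational constants---is routine given the machinery already assembled in the excerpt.
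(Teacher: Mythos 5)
First, a point of reference: the paper does not prove this statement at all --- it is quoted with attribution from \cite[Theorem 7.3.12]{DvPu} --- so there is no internal argument to measure yours against. Judged on its own terms, your plan assembles the right ingredients and, to your credit, spots the real danger (the circularity between surjectivity of $a\mapsto a^*$ and basic disconnectedness of $X$), but one justification is false as stated and the two load-bearing steps are asserted rather than argued. The false justification: states do \emph{not} separate points of an arbitrary MV-algebra --- that is exactly semisimplicity, and it fails e.g.\ for Chang's algebra, whose infinitesimals are killed by its unique state. You are rescued because every $\sigma$-complete MV-algebra is semisimple, but in fact you need no separation argument at all: injectivity of $a\mapsto a^*$ is immediate from $\eta(a^*)=a$ in the setup of Theorem \ref{th:LS}.

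The genuine gaps are these. (i) For surjectivity, Stone--Weierstrass only gives that $\{a^*:a\in M\}$ is uniformly \emph{dense} in $C_1(X)$; you assert it is closed, but that is precisely where $\sigma$-completeness has to be converted into analysis: a uniformly convergent sequence $a_n^*$ must be replaced by a monotone one, its supremum formed in $M$, and the continuous representative of that supremum identified with the uniform limit --- and this last identification (a supremum in $M$ versus a pointwise supremum of continuous functions, which a priori agree only off a meager set) is itself the place where basic disconnectedness enters. (ii) Identifying $X$ with the Stone space of $\mathcal B(M)$ requires producing, for each $\omega$ and each neighbourhood $U$ of $\omega$, an idempotent $e$ with $\omega(e)=1$ and $\{e^*=1\}\subseteq U$; the natural candidate $e=\bigvee_n(na)$ forces you through the same meager-set bookkeeping, so this step is not ``routine Stone duality'' --- it is the heart of the theorem, and your ordering (disconnectedness first, then surjectivity) does not by itself dissolve the difficulty. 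A cleaner and standard route, essentially the one behind \cite[Theorem 7.3.12]{DvPu}, avoids both issues by passing through Mundici's equivalence \cite{Mu}: write $M=\Gamma(G,u)$ for a unital $\ell$-group, check that weak divisibility together with $\sigma$-completeness makes $G$ a divisible Dedekind $\sigma$-complete $\ell$-group, hence a Dedekind $\sigma$-complete Riesz space with strong order unit, and then invoke the Yosida/Nakano--Stone representation, which yields $C(X;\mathbb R)$ with $X$ basically disconnected in one stroke; restricting to the unit interval gives the statement. I would either adopt that route or supply detailed arguments for (i) and (ii).
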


On the other hand,  for every constant $\alpha$, $0\leq \alpha  \leq 1$, $\alpha 1\in M$. It follows that the tribe $M^*$ contains all constant functions with values in $[0,1]$, an therefore by \cite[Theorem 8.4.1]{RiNe}, \cite[Theorem 7.1.7]{DvPu}, $M^*$ contains all ${\mathcal B}(M^*)$-measurable functions.

In the next lemma we show that if $N$ is a convex $\sigma$-sub-MV algebra of $M$\footnote{$N$ is a convex sub-MV-algebra of a convex MV-algebra $M$ iff  $N$ is a sub-MV-algebra of $M$ and $a\in N$, $\alpha \in [0,1]$ implies $\alpha a\in N$.} then $N^*$ consists of all ${\mathcal B}(N^*)$- measurable functions. The following setup is inspired by \cite{Sid}.

\begin{lemma}\label{le:sid} Let $M$ be a convex $\sigma$-MV-algebra and let $N$ be a convex $\sigma$-sub-MV-algebra
of $M$.  Let $N^*:=\{ f\in M^*: \eta(f)\in N\}$, and ${\mathcal B}(N^*)$ be the set of all characteristic functions belonging to $N^*$. Then $N^*$ consists exactly of all ${\mathcal B}(N^*)$-measurable functions in $M^*$.
\end{lemma}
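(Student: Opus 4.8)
The plan is to prove the two inclusions separately. One direction is essentially built into the definition of $\mathcal B(N^*)$: if $f\in N^*$ then $f$ is a $\mathcal B(N^*)$-measurable function, since $N^*$ is a sub-tribe of $M^*$, every element of a tribe is the pointwise limit of simple functions built from its characteristic functions (using that $f\boxplus g$ and $1-f$ and monotone limits stay in the tribe), and those characteristic functions are precisely the elements of $\mathcal B(N^*)$. So $f$ is measurable with respect to the $\sigma$-algebra $\mathcal B(N^*)$ generated by them. The content is the reverse inclusion: every $\mathcal B(N^*)$-measurable function in $M^*$ actually lies in $N^*$.

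For the reverse inclusion, first I would reduce to a statement about $N$ itself rather than $M$. The key structural input is that $M$ is a convex $\sigma$-MV-algebra, so by Theorem \ref{th:wd} the map $a\mapsto a^*$ identifies $M$ with $C_1(X)$ with $X$ basically disconnected, and the paragraph after Theorem \ref{th:wd} shows $M^*$ is exactly the set of all $\mathcal B(M^*)$-measurable $[0,1]$-valued functions. Now $N$ is a \emph{convex} $\sigma$-sub-MV-algebra, so $N$ is itself a convex $\sigma$-MV-algebra and hence weakly divisible; thus $\eta|_{N^*}:N^*\to N$ is a surjective $\sigma$-homomorphism from the tribe $N^*$ onto a weakly divisible $\sigma$-complete MV-algebra. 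Applying the same Loomis--Sikorski / $C_1$ picture to $N$, and using that $N^*$ contains all constant functions in $[0,1]$ (inherited from $M^*$ since $\alpha 1\in N$ by convexity), the cited results \cite[Theorem 8.4.1]{RiNe}, \cite[Theorem 7.1.7]{DvPu} give that $N^*$ contains \emph{all} $\mathcal B(N^*)$-measurable $[0,1]$-valued functions. Intersecting with $M^*$ (noting $\mathcal B(N^*)\subseteq\mathcal B(M^*)$ so a $\mathcal B(N^*)$-measurable function is automatically in $M^*$) yields the claim.

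Concretely, the steps in order would be: (1) observe $N$ is a convex $\sigma$-MV-algebra and $N^*$ contains all constants in $[0,1]$; (2) check that $N^*$ is a tribe over $X$ and that $\mathcal B(N^*)$ is genuinely a $\sigma$-algebra of subsets of $X$ generating the measurable structure on $N^*$; (3) invoke \cite[Theorem 7.1.7]{DvPu} (the ``a tribe containing all constant functions and closed under the stated operations contains every measurable function'') applied to the $\sigma$-algebra $\mathcal B(N^*)$ to conclude every $\mathcal B(N^*)$-measurable $[0,1]$-valued function belongs to $N^*$; (4) conclude the easy inclusion as above and combine.

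The main obstacle I anticipate is step (2)--(3): making precise that the relevant measurability is with respect to $\mathcal B(N^*)$ as defined here and that the quoted ``tribe generated by constants equals all measurable functions'' result applies verbatim to $N^*$ rather than to $M^*$. One must be careful that $\mathcal B(N^*)$ is closed under countable operations (it is, being $\{B\subseteq X:\chi_B\in N^*\}$ with $N^*$ a sub-tribe) and that $N^*$, being closed under $f\mapsto 1-f$, $\boxplus$, monotone limits, multiplication by constants, and containing the constants, meets the hypotheses of \cite[Theorem 7.1.7]{DvPu} with the underlying $\sigma$-algebra taken to be $\mathcal B(N^*)$. Once that bookkeeping is in place the argument is routine.
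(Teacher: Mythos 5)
Your proposal inverts where the actual work lies, and in doing so leaves a genuine gap in the forward inclusion. You dismiss the containment $N^*\subseteq\{\mathcal B(N^*)\text{-measurable functions}\}$ as ``built into the definition,'' justifying it by the assertion that every element of a tribe is a pointwise limit of simple functions built from its characteristic functions. But that assertion \emph{is} the claim to be proved (a function is $\mathcal B(N^*)$-measurable precisely when it is such a limit), so as written the argument is circular; the parenthetical appeal to closure under $\boxplus$, complement and monotone limits does not produce the required simple functions. The substantive point, and the one the paper spends most of its proof on, is that for $f\in N^*$ and $\alpha\in[0,1)$ the level set $A=\{x: f(x)>\alpha\}$ is a \emph{crisp} set of $N^*$. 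This is where convexity of $N$ enters: one forms $g=\max(f,\alpha)\in N^*$ (the constant $\alpha$ lies in $N^*$ because $\alpha 1\in N$), then $h=g-\alpha\in N^*$, and then $g_n=n\min(h,\tfrac1n)=\min(nh,1)\in N^*$; since $g_n\nearrow\chi_A$ pointwise and $N$ is a $\sigma$-MV-algebra, $\chi_A\in N^*$, i.e.\ $A\in\mathcal B(N^*)$. Without some construction of this kind (or an explicit citation of a result such as Butnariu--Klement's on measurability of tribe elements with respect to their crisp sets), the forward inclusion is unsupported.

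The reverse inclusion you treat at length is handled correctly and in essentially the same spirit as the paper, which disposes of it in one sentence (``going, as usual, from characteristic functions to simple functions etc.''). Your reduction via Theorem \ref{th:wd} and the cited \cite[Theorem 7.1.7]{DvPu} is a reasonable packaging of that routine step, provided you verify (as you note) that $\mathcal B(N^*)$ is a $\sigma$-algebra and that $N^*$ contains the constants and the functions $\alpha\chi_A=\alpha 1\wedge\chi_A$ for $A\in\mathcal B(N^*)$. So the proposal is salvageable, but you must supply an actual proof that the level sets of elements of $N^*$ belong to $\mathcal B(N^*)$; that is the content of the lemma, not a consequence of the definitions.
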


\begin{proof}
From the properties of $N$ it easily follows that ${\mathcal B}(N^*)$ is an algebra of subsets of $X$. If $A_i\in {\mathcal B}(N^*)$ for $i=1,2,3,\ldots$, then $\bigcup_{i=1}^kA_i\in {\mathcal B}(N^*)$.
Since $\chi_{\bigcup_{i=1}^k A_i} \nearrow \chi_{\bigcup_{i=1}^{\infty}A_i}$  $\eta$-almost everywhere,
we have $\eta(\chi_{\bigcup_{i=1}^k A_i}) \nearrow \eta(\chi_{\bigcup_{i=1}^{\infty}A_i})$, and since $N$ is a $\sigma$-MV-algebra, $\eta(\chi_{\bigcup_{i=1}^{\infty}A_i})\in N$, hence $\bigcup_{i=1}^{\infty}A_i\in {\mathcal B}(N^*)$. This shows that ${\mathcal B}(N^*)$ is a $\sigma$-algebra of subsets of $X$.

We show that every function $f\in N^*$ is ${\mathcal B}(N^*)$-measurable. By definition, every characteristic function from $N^*$ is ${\mathcal B}(N^*)$-measurable.  Let us consider an arbitrary $f\in N^*$. Let $\alpha \in [0,1)$, and put $A:=\{ x\in X: f(x)> \alpha\}$. Define $g(x):=\max(f(x),\alpha)$,  then $g(x)=f(x)$ if $x\in A$, $g(x)=\alpha$ if $x\notin A$ and since $N^*$ is convex and closed under lattice operations, $g$ is also an element of $N^*$.
Since $g(x)\geq \alpha$, we have $h(x):=g(x)-\alpha\in N^*$, and $h(x)>0$ for $x\in A$, while $h(x)=0$ for $x\notin A$.

We denote $A_n:=\{ x\in X: h(x)> \frac{1}{n}\}$; then $A_1\subset A_2\subset \ldots$, $\bigcup_n A_n=A$.
Furthermore for each $n=1,2,\ldots$ we define the functions $g_n(x):=n.\min(h(x),\frac{1}{n})$. Clearly, $0\leq g_n(x)\leq 1$ and $g_n\in N^*$.

Evidently, $g_n(x)=1$ for $x\in A_n$, $g_n(x)=n.h(x)$ for $x\notin A_n$.  Since for $x\notin A$, $h(x)=0$, we see that $g_n(x) \nearrow  \chi_A(x)$ for all $x\in X$.  Since $N$ is a $\sigma$-MV algebra, this entails that $\chi_A\in N^*$.  This proves that all functions $f\in N^*$ are ${\mathcal B}(N^*)$-measurable.

On the other hand, going, as usual, from characteristic functions to simple functions etc., we show that $N^*$ contains all  ${\mathcal B}(N^*)$-measurable functions from $M^*$. Hence $N^*$ consists precisely of all ${\mathcal B}(N^*)$-measurable functions from $M^*$. In particular, if $f,g\in N^*$ then $f.g\in N^*$.
\end{proof}

\begin{corollary}\label{co:wd} Let $M$ be a convex $\sigma$-MV-algebra, $m$ a $\sigma$-additive state on $M$, and $N$ a convex $\sigma$-MV-subalgebra of $M$. Then for every $a\in M$, the conditional expectation $m(a|N)\in N^*$, and $\eta(m(a|N))$ belongs to $N$.
\end{corollary}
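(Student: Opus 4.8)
The plan is to see this corollary as a repackaging of Lemma~\ref{le:sid} together with the basic properties of the MV-conditional expectation. By Definition~\ref{de:Kcond}, $m(a|N)$ is by construction a ${\mathcal B}(N^*)$-measurable function on $X$, and Lemma~\ref{le:sid} says that $N^*$ is exactly the set of $[0,1]$-valued ${\mathcal B}(N^*)$-measurable functions lying in $M^*$. So it suffices to exhibit a version of $m(a|N)$ that is $[0,1]$-valued everywhere and to check that such a function belongs to $M^*$; the assertion $\eta(m(a|N))\in N$ is then automatic from the definition $N^*=\{f\in M^*:\eta(f)\in N\}$.

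For membership in $M^*$, I would use the fact recorded just after Theorem~\ref{th:wd}: since $M$ is a convex $\sigma$-MV-algebra, $M^*$ contains all constant functions with values in $[0,1]$ and hence, by \cite[Theorem 8.4.1]{RiNe}, \cite[Theorem 7.1.7]{DvPu}, every $[0,1]$-valued ${\mathcal B}(M^*)$-measurable function. Since ${\mathcal B}(N^*)\subseteq{\mathcal B}(M^*)$, any $[0,1]$-valued ${\mathcal B}(N^*)$-measurable function already lies in $M^*$, and therefore in $N^*$ by Lemma~\ref{le:sid}. Thus everything reduces to producing a $[0,1]$-valued version of $m(a|N)$.

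This is where the one delicate point lies. Theorem~\ref{th:4.2}(3) gives only $0\le m(a|N)\le 1$ $P^*$-a.e., so I would set $E:=\{\omega\in X:m(a|N)(\omega)\notin[0,1]\}$; since $m(a|N)$ is ${\mathcal B}(N^*)$-measurable, $E\in{\mathcal B}(N^*)$, and $P^*(E)=0$. Redefining $m(a|N)$ to be $0$ on $E$ and leaving it unchanged off $E$ yields a function that is still ${\mathcal B}(N^*)$-measurable and now takes values in $[0,1]$ everywhere. Moreover, for each $b$ occurring in~(\ref{eq:Kcond}) or~(\ref{eq:mvcond}) one has $P^*_{b^*}\le P^*$ on ${\mathcal B}(M^*)$ --- indeed $P^*_{b^*}(A)=P^*(B\cap A)$ when $b^*=\chi_B$, and $P^*_{b^*}(A)=m^*(b^*\wedge\chi_A)\le P^*(A)$ in general --- so $E$ is also null for every $P^*_{b^*}$ and the correction changes neither~(\ref{eq:Kcond}) nor~(\ref{eq:mvcond}): the modified function is a bona fide $m(a|N)$. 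Combining the three paragraphs, this version of $m(a|N)$ is a $[0,1]$-valued ${\mathcal B}(N^*)$-measurable function, hence lies in $M^*$, hence in $N^*$, and $\eta(m(a|N))\in N$. The main obstacle is purely bookkeeping around ``$P^*$-a.e.'': making sure the bad set sits inside ${\mathcal B}(N^*)$ so that correcting on it preserves ${\mathcal B}(N^*)$-measurability, and that it is null for all the measures $P^*_{b^*}$ so the corrected function is still a conditional expectation. Once that is granted, the conclusion is immediate from the machinery already set up.
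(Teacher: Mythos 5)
Your argument is correct and is exactly the intended derivation: the paper states this corollary without proof as an immediate consequence of Lemma~\ref{le:sid} together with the observation (made just after Theorem~\ref{th:wd}) that $M^*$ contains all $[0,1]$-valued ${\mathcal B}(M^*)$-measurable functions. Your extra care in fixing a $[0,1]$-valued version of $m(a|N)$ on the $P^*$-null (hence $P^*_{b^*}$-null) exceptional set is a legitimate tidying of a point the paper silently glosses over, and does not change the route.
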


Let $M$ be a $\sigma$-MV-algebra, $N$ a $\sigma$-MV-subalgebra of $M$, and $m$ a $\sigma$-additive state on $M$. Then
 ${\mathcal I}_m:=\{ a\in M: m(a)=0\}$ is a $\sigma$-MV algebra ideal on $M$ and the quotient
$\tilde M:=M|\mathcal I_m$ is a $\sigma$-MV-algebra. Put $\tilde{N}:=\{ [a]:a\in N\}$, hence $\tilde{N}$ is the subclass of $\tilde{M}$ consisting of all equivalence classes having a representative in $N$.

\begin{theorem}\label{th:quot} Let $M$ be a convex $\sigma$-MV algebra, $N$ a convex $\sigma$-MV-subalgebra of $M$,  and $m$ a $\sigma$-additive state on $m$.  Define the mapping $\tau:\tilde{M} \to \tilde{M}$ by $\tau[a]:=[\eta(m(a|N))]$. Then $\tau$ is a strong state operator on $\tilde{M}$, and the range of $\tau$ is $\tau(\tilde{M})=\tilde{N}$.
\end{theorem}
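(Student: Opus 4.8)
\emph{Plan.} The guiding idea is that, via the Loomis--Sikorski representation (Theorem~\ref{th:LS}), the quotient $\tilde M=M|\mathcal I_m$ is realized as an algebra of $\mathcal B(M^*)$-measurable functions modulo $P^*$-null sets, and under this identification $\tau$ becomes (the restriction of) the classical Kolmogorov conditional expectation $f\mapsto E[f\,|\,\mathcal B(N^*)]$; all the asserted properties then follow from familiar facts about conditional expectations together with results already proved above. First I would record the basic translation tool: for $f,g\in M^*$ one has $\eta(f\Delta g)=\eta(f)\Delta\eta(g)$ (since $\Delta$ is an MV-term and $\eta$ an MV-$\sigma$-homomorphism), hence
\[
m\big(\eta(f)\Delta\eta(g)\big)=m^*(f\Delta g)=\int_X|f-g|\,dP^*,
\]
so that $[\eta(f)]=[\eta(g)]$ in $\tilde M$ \emph{iff} $f=g$ $P^*$-a.e.\ (using, as in Theorem~\ref{th:quotfaithful}, that $[a]=[b]$ iff $a\Delta b\in\mathcal I_m$). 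Two consequences: (1) $f=(\eta(f))^*$ $P^*$-a.e.\ for every $f\in M^*$ (take $g=(\eta(f))^*$ and use $\eta((\eta(f))^*)=\eta(f)$); (2) since $m(a\,|\,N)$ is by definition the Kolmogorov conditional expectation of $a^*$ given $\mathcal B(N^*)$, and since by Lemma~\ref{le:sid} an element of $N^*$ is exactly a $\mathcal B(N^*)$-measurable function in $M^*$, one gets $m(\eta(f)\,|\,N)=f$ $P^*$-a.e.\ whenever $f\in N^*$.

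With this tool, well-definedness of $\tau$ is immediate: $[a]=[b]$ forces $a^*=b^*$ $P^*$-a.e., hence $m(a\,|\,N)=m(b\,|\,N)$ $P^*$-a.e., hence $[\eta(m(a\,|\,N))]=[\eta(m(b\,|\,N))]$; and $\tau[a]\in\tilde M$ because $\eta(m(a\,|\,N))\in N\subseteq M$ by Corollary~\ref{co:wd}. For the state-operator axioms, unitality $\tau[1]=[1]$ follows from $m(1\,|\,N)=1$ (Theorem~\ref{th:4.2}(1)). Idempotence: for any $a$, $f:=m(a\,|\,N)\in N^*$ (Corollary~\ref{co:wd}), so $\tau(\tau[a])=[\eta(m(\eta(f)\,|\,N))]=[\eta(f)]=\tau[a]$ by consequence~(2); the same computation with $f=b^*\in N^*$ for $b\in N$ gives $\tau[b]=[b]$, so $\tau$ restricts to the identity on $\tilde N$, yielding $\tau(\tilde M)\supseteq\tilde N$, while $\tau(\tilde M)\subseteq\tilde N$ again comes from Corollary~\ref{co:wd}; thus the range is exactly $\tilde N$. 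Finally, faithfulness: if $\tau[a]=0$ then $m(a\,|\,N)=0$ $P^*$-a.e., so $m(a)=\int_X m(a\,|\,N)\,dP^*=0$ (the defining identity of $m(a\,|\,N)$ with $b=1$, where $P^*_{1^*}=P^*$), i.e.\ $a\in\mathcal I_m$ and $[a]=0$. Hence $\tau$ is a faithful state operator and, by Lemma~\ref{le:faithful}, strong.

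The one step needing genuine care is additivity, because the orthosum in $\tilde M$ is given only up to a change of representatives. If $[a]\oplus[b]$ exists, pick $a_1\sim a$, $b_1\sim b$ with $a_1\perp b_1$ in $M$, so $[a]\oplus[b]=[a_1\oplus b_1]$ and, by Theorem~\ref{th:4.2}(2), $m(a_1\oplus b_1\,|\,N)=m(a_1\,|\,N)+m(b_1\,|\,N)=:f+g$ $P^*$-a.e., with $f,g\in N^*$. To exhibit $\tau[a_1]\perp\tau[b_1]$ honestly, set $Z=\{f+g>1\}\in\mathcal B(N^*)$; then $P^*(Z)=0$, and $f_1:=f\chi_{X\setminus Z}$, $g_1:=g\chi_{X\setminus Z}$ are $[0,1]$-valued $\mathcal B(N^*)$-measurable functions, hence lie in $N^*$ (using that $M^*$ contains all $\mathcal B(M^*)$-measurable functions, together with Lemma~\ref{le:sid}), agree with $f,g$ $P^*$-a.e., and satisfy $f_1+g_1\le1$ everywhere. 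Then $\eta(f_1)\boxdot\eta(g_1)=\eta(f_1\boxdot g_1)=\eta(0)=0$, so $\eta(f_1)\perp\eta(g_1)$ in $M$; passing to classes, $\tau[a_1]\perp\tau[b_1]$ and
\[
\tau[a_1]\oplus\tau[b_1]=[\eta(f_1)\oplus\eta(g_1)]=[\eta(f_1+g_1)]=[\eta(f+g)]=[\eta(m(a_1\oplus b_1\,|\,N))]=\tau([a]\oplus[b]).
\]
I expect this representative-juggling --- keeping the modified functions inside $N^*$ while converting $P^*$-a.e.\ orthogonality into genuine orthogonality --- to be the main technical obstacle; everything else is bookkeeping with the translation tool above.
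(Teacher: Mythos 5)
Your proof is correct, and its skeleton is the same as the paper's: identify $m(\cdot\,|\,N)$ with the Kolmogorov conditional expectation of $a^*$ given $\mathcal B(N^*)$, use Theorem \ref{th:4.2} for unitality and additivity, and use Lemma \ref{le:sid} / Corollary \ref{co:wd} to see that $m(a|N)\in N^*$, that $\tau$ fixes $\tilde N$, and that the range is exactly $\tilde N$. Two local deviations are worth noting. First, for strongness the paper argues directly that the range $\tilde N$ is a sub-MV-algebra, hence closed under $\wedge$, and that $\tau$ restricts to the identity on it; you instead prove that $\tau$ is \emph{faithful} (via $m(a)=\int_X m(a|N)\,dP^*_{1^*}$ with $b=1\in\mathcal B(N)$) and invoke Lemma \ref{le:faithful}. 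Both are valid, and your route yields the strictly stronger conclusion that $\tau$ is faithful on $\tilde M$ — a fact the paper's proof does not record. Second, your treatment of additivity is more careful than the paper's: the paper passes from ``$m(a|N)+m(b|N)\le 1$ a.e.\ $[P^*]$'' directly to $\tau[a\oplus b]=\tau[a]\oplus\tau[b]$, whereas you explicitly manufacture genuinely orthogonal representatives in $N^*$ by truncating on the $\mathcal B(N^*)$-measurable null set $\{f+g>1\}$, so that $P^*$-a.e.\ orthogonality becomes actual orthogonality in $M$ before passing to classes. This fills a step the paper leaves implicit, and your use of Lemma \ref{le:sid} (all $\mathcal B(N^*)$-measurable functions of $M^*$ lie in $N^*$) to keep the truncated functions inside $N^*$ is exactly what is needed. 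The only item the paper proves that you omit is $\sigma$-additivity of $\tau$ (via Theorem \ref{th:4.2}(4)), but the statement of the theorem does not claim it, so this is not a gap.
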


\begin{proof} (1) First we prove that $\tau$ is well defined. That is, we have to prove that for any $a_1,a_2\in M$, $a_1\sim a_2$ implies $\eta(m(a_1|N))\sim \eta(m(a_2|N))$. Now $a_1\sim a_2$ iff $m(a_1\Delta a_2)=0$. Let $b\in {\mathcal B}(N)$ with $b^*=\chi_B$, $B\in {\mathcal B}(N^*)$. Then $(a_1\wedge b)\Delta(a_2\wedge b)= (a_1\Delta a_2)\wedge b$, so that $a_1\sim a_2$  entails that $m((a_1\wedge b)\Delta (a_2\wedge b))=0$, and hence $m(a_1\wedge b)=m(a_2\wedge b)$ for all $b\in {\mathcal B}(N)$. Write $f_1:=m(a_1|N), f_2:=m(a_2|N)$. Then $f_1,f_2$ are ${\mathcal B}(N^*)$-measurable, and we have
$\int_B(f_1-f_2)dP^*=0$, $\forall B\in {\mathcal B}(N^*)$. Putting $B_1=\{ \omega:f_1(\omega) > f_2(\omega)\}$, $B_2:=\{\omega: f_1(\omega) < f_2(\omega)\}$, we obtain $P^*(B_1)=0=P^*(B_2)$.
From this we have $P^*\{ \omega: |f_1-f_2|\neq 0\}=0 \, \implies \, m\circ \eta \{ \omega:f_1\Delta f_2 \neq 0\}=0\, \implies \, m\circ \eta(f_1\Delta f_2)=0\, \implies \, m(\eta(f_1)\Delta \eta(f_2))=0$, which yields $\eta(m(a_1|N))\sim \eta(m(a_2|N))$.

(2) Next we prove that $\tau$ is a $\sigma$-additive state operator. By Theorem \ref{th:4.2} (1), $\tau[1]=[1]$. By  Theorem \ref{th:4.2} (2) and (3), $a\boxdot b=0$ implies  $m(a\oplus b|N)=m(a|N)+m(b|N)\leq 1$ a.e. $[P^*]$, whence $\tau[a\oplus b]=\tau[a]\oplus \tau[b]$. Similarly, $\sigma$-additivity follows by Theorem \ref{th:4.2} (4). For every $[c]\in \tilde{N}$, there is $c_1\in N$ with $[c]=[c_1]$. Then by Lemma \ref{le:sid},  $c_1^*\in N^*$ is ${\mathcal B}(N^*)$-measurable, therefore $c_1^*=m(c_1|N)$, which implies $\tau[c]=[\eta(c_1^*)]=[c_1]$. Moreover, $\tau(\tau[c])=\tau[c_1]=[c_1]=\tau[c]$.  Conversely, for every $a\in M$, $m(a|N)\in N^*$, hence $\eta(m(a|N))\in N$, which gives $\tau[a]\in \tilde{N}$. It follows that $\tau$ is a state operator and  the range of $\tau$
is $\tilde{N}$. But $\tilde{N}$ is an MV-algebra, hence $\tau[a]\wedge \tau[b]\in \tilde{N}$, and therefore $\tau(\tau[a]\wedge \tau[b])=\tau[a]\wedge \tau[b]$, and hence $\tau$ is strong.
\end{proof}

Let $\tau$ be a $\sigma$-additive state operator on a convex $\sigma$-MV-algebra $M$. Then (by Theorem \ref{th:wd})
we may define a map $\tau^*:C_1(X)\to C_1(X)$ by
\begin{equation}\label{eq:t}
\tau^*(a^*):=\tau(a)^*
\end{equation}
We show that $\tau^*$ is a $\sigma$-additive state operator on $C_1(X)$. Indeed, $\tau^*(1^*)=\tau(1)^*=1^*$; $a^*+b^*\leq 1^*$ implies $(a^*+b^*)=(a\oplus b)^*$, whence $\tau^*(a^*+b^*)= \tau(a\oplus b)^*=\tau(a)^*+\tau(b)^*=\tau^*(a^*)+\tau^*(b^*)$; $\tau^*(\tau^*(a^*))=\tau^*(\tau(a)^*)=(\tau(\tau(a))^*=\tau(a)^*=\tau^*(a^*)$; moreover,  $a_n^*\nearrow a^*$ implies $a_n\nearrow a$, so that $\tau(a_n)\nearrow \tau(a)$, whence $\tau(a_n)^*\nearrow \tau(a)^*$, which yields $\tau^*(a_n^*)\nearrow \tau^*(a^*)$ (notice that by \cite[Lemma 9.12]{Goo}, the isomorphism $a\mapsto a^*$
 preserves countable suprema and infima).

Conversely, if $\tau^*$ is a state operator on $C_1(X)$, then $\tau(a):=\eta(\tau^*(a^*))$ defines a state operator on $M$.
In addition, from $\omega(a\wedge b)=\inf \{\omega(a),\omega(b)\}$ for every extremal state $\omega \in X$, we obtain $(a\wedge b)^*=a^*\wedge b^*$, which yields $\tau^*(\tau^*(a^*)\wedge \tau^*(b^*))=\tau^*(\tau(a)^*\wedge \tau(b)^*)=\tau^*((\tau(a)\wedge \tau(b))^*)=(\tau(\tau(a)\wedge \tau(b)))^*$, and it shows that $\tau$ is strong iff $\tau^*$ is strong.  Therefore, studying state operators on $M$ may be replaced by studying state operators on $C_1(X)$ and Theorem \ref{th:str_cond}
 applies, hence $\tau$ is a strong state operator on $M$ iff $\tau^*$ is a conditional expectation on $C_1(X)$, in the sense of Section \ref{sec:jca}.

In the next theorem we show that a strong state operator on $M$ yields an MV-conditional expectation on $M$.

\begin{theorem} Let $M$ be a convex  $\sigma$-MV-algebra  and $\tau$ a $\sigma$-additive strong state operator on $M$.  Then there is a convex $\sigma$-MV subalgebra $N$ of $M$ and a state $m$ on $M$ such that for every $a\in M$,
$\tau^*(a^*)$ is an MV-conditional expectation of $a$ with respect to $N$ in $m$.
\end{theorem}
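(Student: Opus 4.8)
The plan is to take for $N$ the range $\tau(M)$ of $\tau$ and, for any fixed $\sigma$-additive state $s$ on $M$, to set $m:=s\circ\tau$; then it remains to verify that $\tau^*(a^*)$ meets the two requirements of Definition \ref{de:Kcond}. First I would set up the picture: by Theorem \ref{th:wd} we may identify $M$ with $C_1(X)$, so that $a=a^*$, $\tau=\tau^*$, and $\eta$ restricted to $C_1(X)$ is the identity. By the discussion preceding the theorem, $\tau^*$ is a $\sigma$-additive state operator on $C_1(X)$; since $\tau$, hence $\tau^*$, is strong, Theorem \ref{th:str_cond} shows that $\tau^*$ is a conditional expectation on $C_1(X)$, and then Corollary \ref{co:equiv} together with the proof of Theorem \ref{th:str_cond} gives that $N:=\tau(M)$ is the unit interval of a JC-subalgebra of $C(X;\mathbb R)$; in particular $N$ is lattice ordered, closed under the pointwise product, and $\tau$ acts as the identity on $N$. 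Using that $\tau$ is affine (Lemma \ref{le:affine}), whence $\alpha\tau(a)=\tau(\alpha a)\in N$ for $\alpha\in[0,1]$, and normal, whence $N$ is closed under the countable suprema of $M$, one checks that $N$ is a convex $\sigma$-sub-MV-algebra of $M$; and $m=s\circ\tau$ is plainly a $\sigma$-additive state on $M$.

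Next I would fix the measure-theoretic notation. Let $P^*$ be the probability measure on $\mathcal B(M^*)$ determined by $m$, so $P^*(A)=m(\eta(\chi_A))$; then $P^*$ vanishes on meager sets, $m^*:=m\circ\eta$ coincides with $f\mapsto\int_X f\,dP^*$ on $M^*$, and $\eta$ is multiplicative on $M^*$ (the pointwise product of two functions agrees off a meager set with the product of their continuous representatives, so they have the same image under $\eta$). By Lemma \ref{le:sid}, $N^*=\{f\in M^*:\eta(f)\in N\}$ is precisely the set of $\mathcal B(N^*)$-measurable functions of $M^*$; since $\eta(\tau^*(a^*))=\tau(a)\in N$, the function $\tau^*(a^*)$ is $\mathcal B(N^*)$-measurable, which is the first requirement of Definition \ref{de:Kcond}. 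For the second requirement, fix $b\in\mathcal B(N)$ and write $b=\chi_B$; since $dP^*_{b^*}=\chi_B\,dP^*$ and $m(a\wedge b)=m^*(\chi_B a^*)=\int_B a^*\,dP^*$, it suffices to show $\int_B\tau^*(a^*)\,dP^*=\int_B a^*\,dP^*$. Using multiplicativity of $\eta$, the identity $\tau|_N=\mathrm{id}$, and the module property $\tau(\tau(b)\cdot g)=\tau(b)\cdot\tau(g)$ for all $g$ (Lemma \ref{le:equiv}(b), applicable because $\tau(b^2)=\tau(b)^2$ for $b\in N$), we get
\[
\int_B a^*\,dP^*=m^*(\chi_B a^*)=s\bigl(\tau(b\cdot a)\bigr)=s\bigl(\tau(b)\cdot\tau(a)\bigr)=s\bigl(b\cdot\tau(a)\bigr),
\]
and, because $b\cdot\tau(a)\in N$,
\[
\int_B\tau^*(a^*)\,dP^*=m^*\bigl(\chi_B\,\tau^*(a^*)\bigr)=s\bigl(\tau(b\cdot\tau(a))\bigr)=s\bigl(b\cdot\tau(a)\bigr).
\]
These agree, so $\tau^*(a^*)$ satisfies (\ref{eq:Kcond}); together with $\mathcal B(N^*)$-measurability, this identifies $\tau^*(a^*)$ with the MV-conditional expectation $m(a|\mathcal B(N))$.

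I expect the main obstacles to lie not in the central computation but in the surrounding bookkeeping: (i) confirming that $N=\tau(M)$ is genuinely a convex $\sigma$-sub-MV-algebra and that the two descriptions of $N^*$ (the sub-tribe generated by $\{b^*:b\in N\}$ versus $\{f\in M^*:\eta(f)\in N\}$) coincide, which is handled by Lemma \ref{le:sid}, Corollary \ref{co:wd} and normality of $\tau$; and (ii) the careful transfer between the three levels $M$, $M^*$ and $C_1(X)$, in particular the verification that $\eta$ respects pointwise products --- it is exactly this fact that converts the operator-algebraic module property of the conditional expectation $\tau^*$ into the measure-theoretic identity (\ref{eq:Kcond}). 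Once these points are settled, the remaining steps are routine.
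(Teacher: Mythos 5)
Your proposal is correct and follows essentially the same route as the paper: take $N:=\tau(M)$, $m:=s\circ\tau$, use Theorem \ref{th:str_cond} to convert strongness into the module property $\tau(\tau(b)\cdot g)=\tau(b)\cdot\tau(g)$, and verify the defining integral identity for $b\in\mathcal B(N)$. You in fact supply slightly more detail than the paper does (the explicit check that $\tau^*(a^*)$ is $\mathcal B(N^*)$-measurable via Lemma \ref{le:sid}, and that $N$ is a convex $\sigma$-sub-MV-algebra), but the argument is the same.
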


\begin{proof}

 Assume that $M$ is a convex $\sigma$-MV-algebra. We may identify $M$ with $C_1(X)$, and define, for $a,b\in M$, $a.b:=\eta(a^*.b^*)$. As $\tau$ is strong, it is a conditional expectation, hence for all $a,b\in M$, $\tau(a.\tau(b))=\tau(a).\tau(b)$.

Let $s$ be a $\sigma$-additive state on $M$. Then $m:=s\circ \tau$ is also a $\sigma$-additive state on $M$, and $m\circ \eta$ is a $\sigma$-additive state on $M^*$. Let $\mu$ be the probability measure $\mu:=m^*|{\mathcal B}(M^*)$.
 Put $N:=\tau(M)$, then it is clear that $N$ is a convex $\sigma$-MV-subalgebra of $M$.  For all $a\in M$, $b\in {\mathcal B}(N)$,

$$
m(a\wedge b)=\circ \tau \circ \eta(a^*.b^*).
$$
Taking into account that by (\ref{eq:t}) $\tau(a))=\tau(\eta(a^*))=\eta(\tau^*(a^*))$, and that $b=\tau(b)$ implies $b^*=\tau^*(b^*)$, we obtain

\begin{eqnarray*}
s\circ \tau \circ \eta(a^*.b^*)&=&s\circ \eta(\tau^*(a^*.b^*))\\
&=&s\circ \eta(\tau^*(\tau^*(a^*.\tau^*(b^* ))))\\
&=&s\circ \tau\circ\eta((\tau^*(a^*).\tau^*(b^*)))\\
&=&\int_X\tau^*(a^*)b^*d\mu,
\end{eqnarray*}
which shows that $\tau^*(a^*)$ is an MV-conditional expectation of $a$.

\end{proof}

\end{document}